\documentclass[11pt,reqno]{amsart}
\usepackage{amssymb, bbm, color}
\usepackage[makeroom]{cancel}
\usepackage{ulem}
\usepackage{amsfonts}
\usepackage{mathrsfs}
\usepackage{amsmath}
\usepackage{graphicx}
\usepackage{hyperref}
\usepackage{float}
\usepackage{epstopdf}
\usepackage{color}
\usepackage{bm}
\usepackage{comment}
\usepackage{soul}
\usepackage{esint}
\usepackage[shortlabels]{enumitem}
\usepackage{eufrak}

\allowdisplaybreaks
\newtheorem{theorem}{Theorem}[section]
\newtheorem{proposition}[theorem]{Proposition}
\newtheorem{lemma}[theorem]{Lemma}
\newtheorem{corollary}[theorem]{Corollary}
\newtheorem{remark}[theorem]{Remark}

\newtheorem{definition}[theorem]{Definition}

\def\diam{\mathrm{diam}}

  \def\Ex{\mathfrak E}

\def\mcE{\mathcal{E}}
\def\mcF{\mathcal{F}}

\def\sB{\mathcal{B}}
\def\sE{\mathcal{E}}
\def\sF{\mathcal{F}}

\def\sN{\mathcal{N}}
\def\sX{\mathcal{X}}

\def\R{{\mathbbm R}}

\def\Z{{\mathbbm Z}}

\def\bP{{\mathbbm P}}

\def\bD{{\overline D}}
\def\1{{\mathbbm{1}}}
 
 \makeatletter
\@namedef{subjclassname@2020}{%
  \textup{2020} Mathematics Subject Classification}
\makeatother

\def\<{\langle}
\def\>{\rangle}

\numberwithin{equation}{section}

\begin{document}

 \title[Heat kernel for reflected jump diffusion on Ahlfors regular domains]{Heat kernel for reflected jump diffusion on Ahlfors regular domains}

\author{Shiping Cao}
\address{Department of Mathematics, The Chinese University of Hong Kong, Shatin, Hong Kong}
\email{spcao@math.cuhk.edu.hk}
\thanks{}

\author{Zhen-Qing Chen}
\address{Department of Mathematics, University of Washington, Seattle, WA 98195, USA}\email{zqchen@uw.edu}
\thanks{}

\subjclass[2020]{Primary 60J76,  60J45,  35K08,    60J46; Secondary    60J25,  31C25,, 31E05} 

\date{}

\keywords{Reflected jump diffusion,  reflected Dirichlet form,  heat kernel, jump kernel, extension operator,   Whitney cover, 
energy estimate}  
  
\maketitle

\begin{abstract}
We study reflected jump diffusions on Ahlfors regular domains in general metric measure spaces. Under the condition that the Dirichlet form on the ambient space satisfies a capacity upper bound estimate, we construct an extension operator from the reflected Dirichlet space to the ambient Dirichlet space, with a scale-invariant local bound.  Second, we establish the mixed stable-like heat kernel estimates for the reflected jump diffusion, assuming that the process on the ambient space satisfies the same type of heat kernel estimates.
\end{abstract}

\maketitle
\section{Introduction} \label{S:1}
On inner uniform domains, where the ambient space supports a symmetric diffusion that satisfies Gaussian or sub-Gaussian heat kernel estimates, it is known that the corresponding reflected diffusion also satisfies 
Gaussian or sub-Gaussian heat kernel estimates  \cite{GL, Mathav, Ant}. 
A similar result holds for reflected diffusion processes with jumps in uniform domains in  Euclidean spaces \cite{CKKW}. 
See \cite{C,CF}   for general results on reflected Dirichlet form and reflected symmetric Markov processes.  
 
The aim of this paper is to study two-sided  heat kernel estimates  for reflected mixed stable-like  jump diffusions on a large class of open sets in locally compact separable metric spaces. In this paper, we say $(\sX, d, m)$ is a metric measure space if $(\sX, d)$ is a  locally compact separable metric space and $m$
is a Radon measure on $(\sX, d)$ having full support. 
The results of this paper have several important implications. For instance, they allow
 us to extend the boundary Harnack principle for censored processes \cite{BBC} to more general domains, using the recent 
results from \cite{CC1}. See also \cite{CC, KM} for recent studies on trace processes of reflected  
diffusions. 

Since connectivity of the underlying space is no longer required for jump processes, the inner uniform domain condition in the heat kernel study of reflected diffusions can be  drastically relaxed for reflected jump diffusions. It turns out that a natural and good condition for reflected jump diffusions is that the open set is
Ahlfors regular, a concept  introduced in Definition \ref{Ahlfors} below for metric measure spaces.
It is a natural extension of the classical Ahlfors $d$-regular set in the $d$-dimensional Euclidean space $\R^d$.
Recall that a Borel  measurable set $F\subset \R^n$ is said to be Ahlfors $d$-regular for some $d\in (0, n]$
if there is a  measure $m$ on $F$ so that $m(B(x,r))\asymp r^d$ for every $x\in \sX$ and $0<r<1$.
Here $B(x, r):= \{y\in F: |y-x|<r\}$, and the notation $f\asymp g$ means that is a constant $c\geq 1$ so that $c^{-1} g \leq f \leq c g$ on
the common domain of  $f$ and $g$. 
It is well known (cf. \cite{JW}) that the Radon measure $m$ in the above definition   is uniformly comparable to the $d$-dimensional Hausdorff measure ${\mathcal H}^d$ on $F$. 

Back in 2003, Chen and Kumagai \cite{CK} established  the $\alpha$-stable-like heat kernel estimate 
\[
p(t,x,y)\asymp t^{-d/\alpha}\wedge\frac{t}{|x-y|^{d+\alpha}}\quad \hbox{ for }0<t<1\hbox{ and }x,y\in \sX,
\]
for the Hunt process associated with the symmetric non-local Dirichlet form $(\sE, \sF)$
\begin{eqnarray}
\mcF &:=& \Big\{f\in L^2(\sX;m):\int_{\sX\times \sX}(f(x)-f(y))^2\frac{1}{|x-y|^{d+\alpha}}m(dx)m(dy)<\infty\Big\},  \label{e:1.1} \\
\mcE(f,g) &:=& \int_{\sX\times \sX}(f(x)-f(y))(g(x)-g(y))\frac{c( x,y)}{|x-y|^{d+\alpha}}m(dx)m(dy), 
\quad f, g\in \sF,   \label{e:1.2} 
\end{eqnarray}
on $L^2(\sX;m)$, where $\sX$ is a closed Ahlfors $d$-regular subset of $ \R^n$ with $d\in (0, n]$ with respect to the  measure $m$ on $\sX$ so that  $m(B(x, r))\leq c r^d$ for every $x\in \sX$ and $r>0$, 
and $c(x, y)$ is a symmetric measurable function on $\sX \times \sX$
that is bounded between two positive constants. The above heat kernel result has been  extended to more general mixed stable-like non-local Dirichlet forms in \cite{CK2} on metric measure spaces satisfying volume doubling (VD) and reverse volume doubling (RVD) conditions and a uniform volume comparability condition. We mention that the boundary of an  Ahlfors  $d$-regular open subset $D\subset \R^d$ can be highly irregular except that its boundary $\partial D$ has zero Lebesgue measure. Thus the results in \cite{CK, CK2} in particular
 give the mixed stable-like heat kernel estimates for the reflected stable-like processes on the closure of Ahlfors $d$-regular open subsets in $\R^d$.

We point out that when $\sX$ is an Ahlfors $d$-regular set in $\R^d$,
the parameter $\alpha$ has to be strictly less than 2,
as for $\alpha \geq 2$ the function space $\mcF$ defined by \eqref{e:1.1} consists of only constant functions. In more general metric measure spaces  including Sierpinski gaskets and Sierpinski carpets in $\R^n$, it may happen that $\alpha\geq 2$, in which case we need a cutoff Sobolev inequality for the stable-like, and more general mixed stable-like heat kernel estimates. 
 Recent developments on the necessary and sufficient conditions for such heat kernel estimates can be found in \cite{CKW, CKW2, GH, Ma}.  These works can be viewed as counterparts of the corresponding
 studies of diffusion processes and finite range  random walks, including \cite{AB,BB,BBK,BCM,BM,GHL}. In particular,  the following characterization of heat kernel estimates is established in  Chen-Kumagai-Wang \cite[Theorem 1.13]{CKW}. 
  Suppose that the metric measure space $(\sX, d, m)$ satisfies (VD) and (RVD) conditions, then the two-sided
  mixed stable-like heat kernel estimates hold for a Hunt process associated with a purely non-local symmetric regular 
  Dirichet form $(\sE, \sF)$ on $L^2(\sX; m)$ if and if only its jump kernel $J(x, y)$ satisfies a two-sided bounds ${\bf J}_\phi$
  and the Dirichlet form satisfies a cut-off Sobolev inequality condition  ${\rm CSJ}(\phi)$. See Section \ref{S:2} for 
  their definitions and a precise statement. Conditions ${\bf J}_\phi$ and  ${\rm CSJ}(\phi)$ are invariant under the comparable
  perturbation of jump kernels $J(x, y)$.  Stable characterization of  parabolic Harnack inequality for mixed stable-like symmetric Hunt processes
  have been established in \cite{CKW2}. 
  The (RVD) condition in \cite{CKW, CKW2} rules out the case that the metric measure space
  $(\sX, d, m)$ has atoms. In a recent paper \cite{Ma}, the above stable characterization of heat kernels has been 
  extended to metric measure spaces allowing atoms  by relaxing (RVD) condition   to a quasi-reverse volume doubling condition (QRVD), 
  through introducing an auxiliary space that reduces (QRVD)  to the (RVD) case. 
    
  \begin{definition}\label{Ahlfors}\rm 
  For a metric measure space $(\sX, d, m)$, 
  we say  a subset $D\subset \sX$ is {\it Ahlfors regular}  if there is a constant $c_D\in (0,1]$ such that 
\begin{equation}\label{e:ahlfors}
m(B (x,r) \cap D) \geq c_D m(B(x,r))  \quad  \hbox{ for every }x\in D \hbox{ and } 0<r<{\rm diam}(D)/2. 
\end{equation} 
\end{definition}

Here ${\rm diam}(D):=\sup_{x, y\in \sX} d(x, y)$ stands for the diameter of $D$. 
Clearly, $D\subset \sX$ is Ahlfors regular if and only if $m(B (x,r) \cap D)\asymp m(B(x, r)$ for any $x\in \sX$
and $0<r<{\rm diam}(D)/2$. As far as we know, this definition of Ahlfors regular sets on metric measure spaces is introduced 
for the first time. It is consistent with the definition of Ahlfors $d$-regular set in $\R^d$
where the Lebesgue volume of the ball $B(x, r)$ is $c_d r^d$. 
Ahlfors regular sets on a metric measure space  $(\sX, d, m)$  have some desired properties. 
We show in Lemma \ref{L:2.5} below that if $(\sX, d, m)$ is (VD)
 and $D \subset \sX$ is an Ahlfors regular open subset,
then $m(\partial D)=0$
and $(\overline D, d, m_0)$ is (VD), where    $m_0:=m|_D$,
 that is,  $m_0(A):=m(A\cap D)$ for any Borel subset $A\subset \overline D$;
 if $(\sX, d, m)$ is (QRVD), then so is $(\overline D,d,m_0)$. 
 Here $\bD$ is the closure of $D$ in $\sX$ and $\partial D:=\bD \setminus D$ is its boundary.
 
Suppose now that the metric measure space $(\sX,d,m)$ satisfies {\rm(VD)} and {\rm(QRVD)}, 
and $(\sE, \sF)$ is a purely non-local regular symmetric
Dirichlet form on $L^2(\sX; m)$ that satisfies the two-sided  heat kernel estimates ${\bf HK}(\phi)$
(see Definition \ref{D:2.6}(b) for its definition).
For an open subset $D\subset \sX$, denote by $(\bar \sE, \bar \sF)$ the active reflected Dirichlet form
on  $L^2(\overline D; m_0)$ of $(\sE, \sF)$ on $\overline D$ in the sense of \cite{C, CF}.
The main result of this paper is Theorem \ref{mainthm}. 
It asserts that for any Ahlfors regular open subset $D$ of $\sX$, 
the   metric measure Dirichlet space $(\bD,d,m_0,\bar \sE, \bar \sF)$ is regular and has two-sided heat kernel estimates ${\bf HK}(\phi)$. 
We know from \cite{CKW, Ma} that under (VD) and (QRVD), ${\bf HK}(\phi) \Longleftrightarrow {\bf J}_\phi+{\rm CSJ}(\phi)$.
Since ${\bf J}_\phi$ condition for the active reflected Dirichlet form $(\bar \sE, \bar \sF)$ on  $L^2(\overline D; m_0)$  is inherited from that 
of $(\sE, \sF)$, in view of the result of Lemma \ref{L:2.5} mentioned above, it suffices to show that 
${\rm CSJ}(\phi)$ for $(\bar \sE, \bar \sF)$ is inherited from that of $(\sE, \sF)$ as well. 
Since the cut-off Sobolev inequality ${\rm CSJ}(\phi)$ is formulated in terms of energies and $L^2$-norms (see Definition \ref{D:2.6}(e) below), 
it is natural to try to show that there is an extension operator from $D$ to $\sX$ that preserves the energy and $L^2$ bounds.
This approach indeed works and is carried out in this paper.

When $(\sE,\sF)$ is a strongly local regular symmetric Dirichlet form on $L^2(\sX; m)$ that satisfies a two-sided sub-Gaussian heat kernel bounds
and $D\subset \sX$ is a uniform domain, 
such an extension operator has been constructed in Murugan \cite{Mathav} which is used to establish the cutoff Sobolev inequality for the reflected diffusion on $\overline D$. It follows from the analogous analytic characterization of sub-Gaussian heat kernel bounds for diffusions that the reflected diffusion on $\overline D$ has a two-sided sub-Gaussian estimates. This result has recently been extended to inner uniform domains
in \cite{Ant}. 

There are major differences between the approach of constructing extension operators in this paper for reflected jump processes and 
that of \cite{Mathav} for reflected diffusions. 

    Ahlfors regular open sets are much more general than uniform and inner uniform domains. 
They do not satisfy a corkscrew condition that uniform and inner uniform domains enjoy, 
and hence a `reflection' in the sense of \cite[Proposition 3.12]{Mathav} 
from the Whitney cover of exterior domain to the Whitney cover of interior domain does not work. 
Instead, we introduce in Proposition \ref{P:3.3} a class of functions that plays the role of the Whitney cover of the domain.

Another difference is we need to estimate the long distance energy  as our Dirichlet form is non-local. We need to split the energy into three parts, near diagonal in the exterior domain, off diagonal in the exterior domain, and the cross energy, and carefully estimate each part in Section 3. 

The rest of paper is organized as follows. In Section \ref{S:2}, we state the main theorem (Theorem \ref{mainthm}) and related conditions. 
In Section \ref{S:3}, we construct an extension operator and show the local scale-invariant estimates on $L^2$ norm. 
In Section \ref{S:4}, we prove the local scale-invariant estimates of the extension operator on energy, and in particular, this implies the  active reflected Dirichlet form is regular. Finally, in Section \ref{S:5}, we establish the cut-off Sobolev inequality for the active
reflected Dirichlet form, and derive the two-sided heat kernel estimates for the reflected jump process on the closure of Ahlfors regular open subsets.

\section{Conditions and main theorem}\label{S:2} 
Throughout the paper, $(\sX,d)$ is a locally compact metric measure space, and $m$ is a Radon measure on $(\sX,d)$ with full support and may possibly have atoms. We let $B(x,r):=\{y\in\sX:\,d(x,y)<r\}$ denote the open ball of radius $r$ centered at $x$ in $\sX$. For short, we write 
\[
V(x,r)=m(B(x,r))\ \hbox{ for }x\in \sX\hbox{ and }r>0
\]
from  time to time. We use $:=$ as a way of definition. 
We use  $d(A,B):=\inf\{d(x,y):\,x\in A,\,y\in B\}$ to denote the distance between two sest $A,B\subset \sX$, and 
define  $d(x,A):=d(\{x\},A)$ for $x\in \sX$ and $A\subset \sX$. 
We denote by $\sB (\sX)$ the collection of all Borel measurable subsets of $\sX$, while 
for $F\in \sB(\sX)$, $\sB (F)$ stands for the collection of all Borel measurable subsets of $ F$.

For a set $E\subset\sX$,  denote by $E^c:=\sX\setminus E$ its  complement  in $\sX$,  $\overline{E}$  its closure,  and
  $\partial E=\overline{E}\cap\overline{E^c}$  the boundary of $E$.   Denote by $\1_E$  the indicator function of $E\subset\sX$, that is, 
\[
\1_E(x)=\begin{cases}1\quad\hbox{ for }x\in E,\\0\quad\hbox{ for }x\in\sX\setminus E.\end{cases}
\]

\begin{definition}
We say a metric measure space $(\sX,d,m)$ is volume doubling {\rm(VD)} if there is $C_1\in (1,\infty)$ such that
$$
V(x,2r) \leq C_1 \, V(x,r) \qquad \hbox{ for every }  x\in \sX   \hbox{ and }   r\in (0,\infty),
$$
where $V(x, r):= m(B(x, r))$. 
This is equivalent to the existence of positive constants $c_1$ and $d_1$ so that 
\begin{equation}\label{e:vd}
	\frac{ V(x, R)}{V(x, r)} \leq c_1  \Big(\frac{R}{r} \Big)^{d_1}  \quad 
	\hbox{ for every }x\in \sX   \hbox{ and }   0<r\leq R<\infty.
\end{equation}
We say that reverse volume doubling property {\rm(RVD)} holds if there are $\lambda_0\geq 2 $ and $C_2 >1$ so that 
\begin{equation}\label{e:rvd}
	V(x, \lambda_0 r)   \geq C_2  V(x, r) \quad \hbox{ for every }x\in \sX   \hbox{ and }   0<r  < {\rm diam}(\sX)/\lambda_0.
\end{equation}
This is equivalent to the existence of positive constants $c_2$ and $d_2$ so that 
$$
\frac{V(x,R)} {V(x,r)} \geq  c_2 \Big(\frac{R}{r} \Big)^{d_2}  \quad 
\hbox{ for every }x\in \sX   \hbox{ and }   0<r \leq R < {\rm diam}(\sX).
$$
\end{definition}\smallskip 

(RVD) condition implies that $m(\{x\})=0$ for every $x\in \sX$.
In a recent paper \cite{Ma}, the following quasi-reverse volume doubling (QRVD) condition  
is introduced in place of (RVD)
to extend  the stability results in \cite{CKW} on heat kernel estimates for pure jump symmetric Markov processes 
to some metric measures spaces having  atoms including graphs. 

\begin{definition}
We say a metric measure space $(\sX,d,m)$ is quasi-reverse volume doubling {\rm(QRVD)} if there are constants $\lambda_0\geq 2 $ and $C>1$ so that 
\[
V(x,\lambda_0r)\geq CV(x,r)\ \hbox{ for }x\in\sX\hbox{ and }d(x,\sX\setminus\{x\})\leq r<\diam(\sX)/\lambda_0. 
\] 
\end{definition}

\medskip

It is clear that (RVD) is equivalent to (QRVD) when $(\sX,d)$ does not have an isolated point. 

\smallskip  

\subsection{Consequences of (VD) and (QRVD)}

\begin{lemma}\label{lemmavd1}
Suppose that $(\sX, d, m)$ is {\rm(VD)}, and  $c_1$ and $d_1$ are the constants in \eqref{e:vd}. Then 
\begin{equation}\label{e:vd1}
\frac{V(x,r_1)}{V(y,r_2)}\leq c_1 \Big(\frac{d(x,y)+r_1}{r_2} \Big)^{d_1}\quad\hbox{ for }x,y\in\sX\hbox{ and }r_1,r_2>0. 
\end{equation} 
\end{lemma}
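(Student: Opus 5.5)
The plan is to compare both balls with a single ball centered at $y$ and then apply \eqref{e:vd} once. Set $R:=d(x,y)+r_1$. Every $z\in B(x,r_1)$ satisfies $d(z,y)\le d(z,x)+d(x,y)<r_1+d(x,y)=R$. Hence $B(x,r_1)\subset B(y,R)$, and by monotonicity of $m$ we get $V(x,r_1)\le V(y,R)$.

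Next we split into two cases, according to how $R$ compares with $r_2$.

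If $R\ge r_2$, apply \eqref{e:vd} at the center $y$ with radii $r_2\le R$:
\[
\frac{V(x,r_1)}{V(y,r_2)}\le \frac{V(y,R)}{V(y,r_2)}\le c_1\Big(\frac{R}{r_2}\Big)^{d_1}=c_1\Big(\frac{d(x,y)+r_1}{r_2}\Big)^{d_1}.
\]

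If $R<r_2$, monotonicity in the radius gives $V(y,R)\le V(y,r_2)$, so the ratio is at most $1$. Here the right-hand side of \eqref{e:vd1} is smaller than $c_1$, and may even be smaller than $1$, so the bound does not follow trivially from ratio $\le 1$. Taking $r=R=r_2$ in \eqref{e:vd} shows $c_1\ge1$, but that alone does not close the case.

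This case is the one genuine obstacle. I would handle it by interpreting \eqref{e:vd1} with the base $\max\{1,(d(x,y)+r_1)/r_2\}$, or equivalently by stating it for $r_2\le d(x,y)+r_1$, which is the regime in which the lemma is used. If the literal statement is required for all $r_1,r_2>0$, I would note that this case is not covered by the argument above and would need the reformulation.

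With that reading, both cases yield \eqref{e:vd1}. The only inputs are the triangle inequality and a single application of \eqref{e:vd}.
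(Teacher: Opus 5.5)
The paper states this lemma without proof. Your argument is the standard one: the inclusion $B(x,r_1)\subset B(y,d(x,y)+r_1)$, followed by one application of \eqref{e:vd} at the center $y$. It is complete whenever $r_2\le d(x,y)+r_1$.

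The case you could not close is not a gap in your argument. There the inequality \eqref{e:vd1} is false in general, for every choice of $c_1,d_1>0$, so you can say this outright instead of hedging.

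\begin{itemize}
\item Take $\sX=[0,1]$ with Lebesgue measure, which is (VD); bounded spaces are allowed, cf.\ Remark~\ref{R:2.7}(iii). With $x=y=0$ and $r_1=1/2$, the left side of \eqref{e:vd1} equals $1/2$ for all $r_2\ge 1$. The right side $c_1(2r_2)^{-d_1}$ tends to $0$ as $r_2\to\infty$.
\item Take $\Z$ with counting measure; atoms are allowed in the paper. Fix $x=y$ and $r_2=2$, and let $r_1\downarrow 0$. The left side stays $1/3$ and the right side tends to $0$.
\end{itemize}

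So some restriction is necessary. The customary one is $0<r_2\le r_1$, and this is contained in your version $r_2\le d(x,y)+r_1$. Your version is equivalent to using the base $\max\{1,(d(x,y)+r_1)/r_2\}$, since $c_1\ge 1$.

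Your remark that this is the regime actually needed is also right. Each place the paper invokes \eqref{e:vd1} compares balls whose radii and center separation are mutually comparable, and only needs a bound by a constant, which the corrected form provides. These places are Proposition~\ref{P:3.3}, the choice of $\eta_i$ in \eqref{e:3.7a}, Lemma~\ref{L:4.2}, and Propositions~\ref{P:4.3}--\ref{P:4.5}.
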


\begin{lemma}\label{lemmavd2}
Suppose that $(\sX, d, m)$  is {\rm(VD)}, and  $c_1$ and $d_1$ are the constants in \eqref{e:vd}. Let $E$ be a bounded subset of $\sX$ and $r>0$. 
There is a finite subset $\{x_i\}_{i=1}^N\subset E$ with $ N\leq c_1(1+4\,\diam(E)/r)^{d_1}$ such that $d(x_i,x_j)>r$ for $1\leq i<j\leq N$ and $E\subset\bigcup_{i=1}^N B(x_i,r)$.
 Moreover, the following properties hold. 
\begin{enumerate}[\rm (a)]
\item $\sum_{i=1}^N\1_{B(x_i,\eta r)}\leq c_1(2\eta+1)^{d_1}$ for every $\eta\geq1$.

\item $\bigcup_{x\in E}B(x,(\eta-1)r)\subset\bigcup_{i=1}^NB(x_i,\eta r)$ for every $\eta>1$. 
\end{enumerate}
\end{lemma}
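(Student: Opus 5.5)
The plan is a greedy maximal $r$-separated set combined with a volume-counting argument driven by Lemma \ref{lemmavd1}.

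\emph{Construction.} Choose points $x_1,x_2,\dots\in E$ inductively so that each new point lies at distance $>r$ from all earlier ones, and stop when this is no longer possible. While the process runs, the balls $B(x_i,r/2)$ are pairwise disjoint. Let $D:=\diam(E)$. Each such ball lies in $B(x_j, D+r/2)$ for any fixed index $j$.

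\emph{Counting the points.} Summing the volumes of the disjoint balls gives
\[
\sum_i V(x_i,r/2)\le V(x_j,D+r/2).
\]
We now pick $j$ so that $V(x_j,r/2)$ is the smallest of the $V(x_i,r/2)$. Then
\[
N\le \frac{V(x_j,D+r/2)}{V(x_j,r/2)}\le c_1\Big(\frac{D+r/2}{r/2}\Big)^{d_1}=c_1\Big(1+\frac{2D}{r}\Big)^{d_1}
\]
by \eqref{e:vd}. This is at most $c_1(1+4D/r)^{d_1}$. The same bound applies to every finite stage of the construction, so the process terminates with a finite $N$ obeying the bound.

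\emph{Covering.} Maximality of the set means every $x\in E$ satisfies $d(x,x_i)\le r$ for some $i$. This only places $x$ in the closed ball of radius $r$, but the statement needs the open ball $B(x_i,r)$. I expect this closed-versus-open issue to be the only subtle point. The remedy is to run the construction with separation $>r$ while covering with a slightly smaller radius, choosing points with $d\ge r'$ for some $r'<r$ close to $r$. The counting bound has slack, since it gives $1+2D/r$ against the required $1+4D/r$, and this slack absorbs the change. Concretely, I would take $r'$ slightly less than $r$ with $2D/r'\le 4D/r$, i.e. $r'\ge r/2$. The points are then $r'$-separated, so I need to check compatibility with the stated requirement $d(x_i,x_j)>r$. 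Alternatively, I keep the $>r$ separation and note that points at distance exactly $r$ are allowed in the construction only if "$\le r$" is used. The cleanest route is to choose points with $d(x_i,x_j)>r$ as stated, and for coverage argue that if some $x\in E$ has $\min_i d(x,x_i)= r$, that point is added as well. Since the separation condition is strict, this adds an $x$ with distance exactly $r$ which is not $>r$, so the approach fails at this point. I will therefore accept the $r'$-separated version if needed and treat this as a routine adjustment.

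\emph{Part (a).} Fix $y$ and let $I$ be the set of indices $i$ with $y\in B(x_i,\eta r)$. For $i\in I$ the disjoint balls $B(x_i,r/2)$ lie in $B(y,\eta r+r/2)$. Pick $j\in I$ minimizing $V(x_j,r/2)$. Then
\[
\#I\le \frac{V(y,(\eta+1/2)r)}{V(x_j,r/2)}\le c_1\Big(\frac{\eta r+(\eta+1/2)r}{r/2}\Big)^{d_1}=c_1(4\eta+1)^{d_1}
\]
by Lemma \ref{lemmavd1}. To reach the sharper constant $(2\eta+1)$, I would instead use $V(x_j,(\eta+1/2)r)$: the ball $B(y,\eta r+r/2)$ sits inside $B(x_j,2\eta r+r/2)$, and \eqref{e:vd} gives $c_1(4\eta+1)^{d_1}$. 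This is again off by a factor. Matching the exact constant will require care with radii, possibly by exploiting the $>r$ separation, which allows disjoint radius-$r/2$ balls. I expect the constant bookkeeping here to be the main obstacle, though only cosmetic.

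\emph{Part (b).} If $d(z,x)<(\eta-1)r$ with $x\in E$ and $d(x,x_i)<r$, then the triangle inequality gives $d(z,x_i)<\eta r$, so $z\in B(x_i,\eta r)$.
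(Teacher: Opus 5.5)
Your route is the paper's: greedy selection, pairwise disjointness of the balls $B(x_i,r/2)$, and a volume count via (VD). Part (b) is identical to the paper's. Choosing $j$ to minimise $V(x_j,r/2)$ even gives $N\le c_1(1+2\diam(E)/r)^{d_1}$, which is sharper than the paper's count through $B(x_1,\diam(E)+r/2)\subset B(x_i,2\diam(E)+r/2)$. The two places where you stall are defects of the statement, not of your argument, so you should stop trying to repair them.

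\emph{Separation.} The paper picks $x_{k+1}\in E\setminus\bigcup_{i\le k}B(x_i,r)$. This gives $d(x_i,x_j)\ge r$ (not $>r$) and a cover by the open balls $B(x_i,r)$. That is all that is ever used, since it already makes the balls $B(x_i,r/2)$ disjoint. Strict separation together with open-ball covering is impossible in general: for $E=\{0,r\}\subset\R$, no subset of $E$ has both properties. So replace your $r'$ detour, which would only give $r'$-separation anyway, by this choice and state the lemma with $\ge r$.

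\emph{Part (a).} Your constant $c_1(4\eta+1)^{d_1}$ is what the disjoint $r/2$-balls honestly give. The paper obtains $c_1(2\eta+1)^{d_1}$ only through $\sum_i V(x_i,r)\le V(x,(\eta+1)r)$. That inequality needs the balls $B(x_i,r)$ to be pairwise disjoint, and for $r$-separated centres they are not.

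The sharper constant is in fact false. Let $\sX$ be four half-lines glued at a point $o$, with the path metric and length measure. For a point at distance $a$ from $o$, one has $V(\cdot,s)=2s$ for $s\le a$ and $V(\cdot,s)=4s-2a$ for $s>a$. Hence $\sup_{x,s}V(x,ts)/V(x,s)=2t-1$ for $t\ge1$. So \eqref{e:vd} holds with $d_1=42/37$ and $c_1=7.4\cdot(4.2)^{-d_1}$: this convex power curve is tangent to $2t-1$ at $t=4.2$, so it lies above it.

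Take $\eta=8/5$ and $0<\epsilon<1/30$. Let $E$ consist of the eight points at distances $(\tfrac12+\epsilon)r$ and $(\tfrac32+3\epsilon)r$ from $o$, two on each half-line. These points are pairwise more than $r$ apart, so the only admissible subset is $E$ itself. Yet $o$ lies in all eight balls $B(x_i,\eta r)$, while $c_1(2\eta+1)^{d_1}=7.4<8$.

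The later uses are Lemma \ref{L:5.2} and the finite cover in the proof of Proposition \ref{P:4.1}. They need only some bound depending on $\eta$ and the (VD) data, so your version is the one that should be stated.
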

\begin{proof}
 The first statement is well-known for a doubling metric measure space $(\sX,d,m)$ (see \cite[Section 10.13]{He}). 
 For the convenience of the reader, we provide a detailed proof here. 	
We find the finite set of points $\{x_i\}_{i=1}^{N}$ by the following procedure. First, we pick $x_1\in E$. Next, if $E\subset B(x_1,r)$, 
we are done by ending the process with $\{x_1\}$; otherwise, we pick $x_2\in E\setminus B(x_1,r)$ to form a larger set $\{x_i\}_{i=1}^2$. Next, we repeat the procedure for the set $\{x_i\}_{i=1}^2$. If $E\subset \bigcup_{i=1}^2B(x_i,r)$, we end the process; otherwise, we pick $x_3\in E\setminus\bigcup_{i=1}^2B(x_i,r)$ to form  $\{x_i\}_{i=1}^3$. We keep doing this until $E\subset \bigcup_{i=1}^{N}B(x_i,r)$. This process has to stop after finitely many steps and $N\leq  c_1(1+4\diam(E)/r)^{d_1}$. This is because  $\{B(x_i,r/2);i\geq 1\}$ are pairwise disjoint, and for $i\geq 1$,  
\[
B(x_i, r/2)\subset B(x_1,\diam(E)+r/2)\subset B(x_i,2\diam(E)+r/2),
\] 
so by \eqref{e:vd}, 
\begin{align*}
 N\cdot V(x_1,\diam(E)+r/2)
&\leq \sum_{i=1}^{N} V(x_i,2\diam(E)+r/2)  \\
&\leq c_1(1+4\,\diam(E)/r)^{d_1}\sum_{i=1}^{N}V(x_i,r/2)\\
&\leq c_1(1+4\,\diam(E)/r)^{d_1}V(x_1,\diam(E)+r/2). 
\end{align*} 
This proves the existence of finite many points  $\{x_i\}_{i=1}^N$ in $E$ with the desired property with $N\leq c_1(1+4\,\diam(E)/r)^{d_1}$.

(a). This part can be proved by  a similar idea.
Let $x\in\sX$. For each $1\leq i\leq N$ such that $x\in B(x_i,\eta r)$, 
\[
B(x_i,r)\subset B(x,(\eta+1)r)\subset B(x_i,(2\eta+1)r),
\]
so by \eqref{e:vd},
\begin{align*}
\#\{1\leq i\leq N:x\in B(x_i,\eta r)\}\cdot V(x,(\eta+1)r)&
\leq \sum_{1\leq i\leq N:x\in B(x_i,\eta r)}V(x_i,(2\eta+1)r)\\
\\&\leq c_1(2\eta+1)^{d_1}\sum_{1\leq i\leq N:x\in B(x_i,\eta r)}V(x_i,r)
\\&\leq c_1(2\eta+1)^{d_1}V(x,(\eta+1)r).
\end{align*}
It follows that $\sum_{i=1}^N\1_{B(x_i,\eta r)}(x)=\#\{1\leq i\leq N:x\in B(x_i,\eta r)\}\leq c_1(2\eta+1)^{d_1}$.

(b). For each $x\in E$, we can find $1\leq i\leq N$ so that $x\in B(x_i,r)$, and hence $B(x,(\eta-1)r)\subset B(x_i,\eta r)$.
\end{proof}

\smallskip

For notational convenience, for any open set $D\subset \sX$,  we write $B_D(x,r) :=D\cap B(x,r)$.

 \begin{lemma}\label{L:2.5}
Suppose that  $D\subset \sX$ is Ahlfors regular. 
\begin{enumerate}[(a)]
\item If $(\sX,d,m)$ is {\rm(VD)}, then $m(\partial D)=0$ and thus  $m_0= m|_{\overline D} $.
\item If $(\sX,d,m)$ is {\rm(VD)}, then so is $(\bD,d,m_0)$.
\item If $(\sX,d,m)$ is {\rm(QRVD)}, then so is $(\bD,d,m_0)$. 
\end{enumerate}
\end{lemma}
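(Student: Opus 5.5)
For (a), I would show that $\partial D$ has no Lebesgue density points with respect to $m$. Since $D$ is open, $\partial D\cap D=\emptyset$, so $m(B(x,r)\cap\partial D)\le V(x,r)-m(B_D(x,r))$. For $x\in\partial D$ and small $r$, I pick $y\in D$ with $d(x,y)<r/2$. Then $B(y,r/2)\subset B(x,r)$, and by \eqref{e:ahlfors} together with (VD),
\[
m(B_D(x,r))\ge m(B_D(y,r/2))\ge c_D V(y,r/2)\ge c_D c_1^{-1}3^{-d_1}V(x,r),
\]
using Lemma \ref{lemmavd1} for the last step. This requires $r/2<\diam(D)/2$. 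If $D$ is bounded, small $r$ suffice; if $D$ is unbounded the restriction is void. Hence
\[
\frac{m(B(x,r)\cap\partial D)}{V(x,r)}\le 1-c<1\qquad\text{for all small } r.
\]
By the Lebesgue differentiation theorem, valid on doubling metric measure spaces, $m$-a.e. point of $\partial D$ has density $1$. Therefore $m(\partial D)=0$, and consequently $m_0=m|_{\bD}$.

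For (b), let $x\in\bD$. The same argument, with an approximating $y\in D$, gives $m_0(B(x,r))\ge c\,V(x,r)$ for $0<r<\diam(D)$; this extends the lower bound of \eqref{e:ahlfors} to all points of $\bD$. For $r<\diam(D)/2$ we then have
\[
m_0(B(x,2r))\le V(x,2r)\le C_1V(x,r)\le C_1c^{-1}m_0(B(x,r)).
\]
For $r\ge\diam(D)/2$ (when $D$ is bounded), the ball $B(x,2r)$ meets $\bD$ inside $B(x,4r')$ for some $r'\le r$ below the threshold. More simply, $m_0(B(x,2r))=m(D)$, while $m_0(B(x,r))\ge m_0(B(x,\diam(D)/4))\ge c\,V(x,\diam(D)/4)\ge c' V(x,2\diam(D))\ge c' m(D)$ by (VD). This gives (VD) for $(\bD,d,m_0)$ at every scale.

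For (c), fix $x\in\bD$ and $r$ with $d(x,\bD\setminus\{x\})\le r<\diam(\bD)/\lambda$, where $\lambda$ is to be chosen. I want $m_0(B(x,\lambda r))\ge C m_0(B(x,r))$ for some $C>1$. First note that $d(x,\sX\setminus\{x\})\le d(x,\bD\setminus\{x\})\le r$, so (QRVD) applies in $\sX$ at every scale $s\ge r$ with $s<\diam(\sX)/\lambda_0$, provided $\diam(\sX)\ge\diam(D)$; this holds. Iterating $k$ times gives $V(x,\lambda_0^k r)\ge C^kV(x,r)$. Combining this with the comparability $m_0(B(x,s))\asymp V(x,s)$ for $s<\diam(D)/2$ from (b), I get
\[
m_0(B(x,\lambda_0^kr))\ge c\,C^k V(x,r)\ge cC^k m_0(B(x,r)),
\]
which exceeds $2m_0(B(x,r))$ once $k$ is large. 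Taking $\lambda=\lambda_0^k\cdot 4$ ensures that all intermediate scales stay below $\diam(D)/2$.

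The main obstacle is bookkeeping near the top scale $\diam(D)$ and the atom case. When $x$ is an isolated point of $\bD$ but not of $\sX$, the hypothesis in $\sX$ is still satisfied, as noted above. One more check is needed: if $x\in\partial D$ is an atom, then $m(\partial D)=0$ forces $m(\{x\})=0$, so there is no issue there.
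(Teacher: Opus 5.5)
Parts (a) and (b) follow the paper's argument: pick $y\in D$ with $d(x,y)<r/2$, use \eqref{e:ahlfors} at $y$ together with (VD) to compare $V(y,r/2)$ with $V(x,r)$, and for (a) finish with the Lebesgue differentiation theorem. In (b) you also treat $r\ge \diam(D)/2$, using $m_0(B(x,2r))\le m(D)\le V(x,2\diam(D))$. The paper's proof, which only considers $r<\diam(D)$, leaves that range implicit.

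In (c) your route differs, and there is one point to fix. At a boundary point $x\in\partial D$ you use the lower bound $m_0(B(x,s))\ge c\,V(x,s)$ obtained in (b). That bound relies on (VD), whereas (c) assumes only (QRVD). Your remark about atoms in $\partial D$ rests on (a), hence also on (VD); it is not needed anyway. This does no harm for Theorem \ref{mainthm}, where (VD) is assumed, but it means (c) is not proved as stated.

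To fix it, handle $x\in D$ directly with \eqref{e:ahlfors}, which needs no (VD). Then, as the paper does, pass from $x\in\partial D$ to some $y\in D$ with $d(x,y)<r/4$. Since $d(y,\bD\setminus\{y\})\le d(x,y)<r/4$, the interior case at $y$ with radius $r/2$ gives $m_0(B(x,\lambda r))\ge m_0(B(y,\lambda r/2))\ge C\,m_0(B(y,r/2))\ge C\,m_0(B(x,r/4))$. This is (QRVD) for $\bD$ with $4\lambda$ in place of $\lambda$.

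Your iteration, on the other hand, is genuinely necessary and more careful than the written proof. A single application of (QRVD) only gives $m_0(B(x,\lambda_0 r))\ge c_D C\,m_0(B(x,r))$, and $c_D C$ need not exceed $1$. The paper's displayed chain in (c) writes $c_D^{-1}$ where \eqref{e:ahlfors} only gives $c_D$. Replacing $\lambda_0$ by $\lambda_0^k$ with $c_D C^k>1$ is the correct repair, provided one checks, as you do, that $d(x,\sX\setminus\{x\})\le r$ and that all scales used stay below $\diam(D)/2\le\diam(\sX)$.
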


\begin{proof}
(a).  Our proof uses the same idea as that for \cite[Lemma 3.5]{Mathav}.  For each $x\in \partial D$ and $0<r<\diam(D)$, take $y\in B_D(x,r/2)$. Clearly, 
\[
B(y,r/2)\subset B(x,r)\subset B(y,2r).
\]
Hence
\begin{align*}
	\frac{m(B(x,r)\cap\partial D)}{V(x,r) }&\leq1-\frac{m(B(x,r) \cap D)}{V(x,r)}\leq 1- \frac{m(B(y,r/2) \cap D)}{V(y, 2r)}\\
	&\leq 1-c_D {\frac{V (y,r/2) }{V(y, 2r)}} \leq 1-C_1
\end{align*}
for some $C_1$ depending only the parameters of {\rm (VD)} and the constant $c_D$ in \eqref{e:ahlfors}.
  Hence, by the Lebesgue differentiation theorem \cite[Theorem 1.8]{He}, 
\[
\1_{\partial U}(x)=\lim\limits_{r\to 0}\frac{m(B(x,r)\cap\partial D)}{m(B(x,r))}\leq 1-C_1<1
\quad \hbox{ for }m\hbox{-a.e.}\  x\in \partial U.
\]
This implies $m(\partial U)=0$. 

\smallskip

 (b). For $x\in \overline D$ and $r<\diam(D)$
 , let $y\in D$ be such that $d(x,y)<r/2$.
Then  by the (VD) of $(\sX,d,m)$   and the Ahlfors regularity of $D\subset \sX$,
\begin{align*}
m_0(B(x,2r))\leq V (x,2r) \leq  V(y, 3r)\leq C_1 V(y, r/2)\leq C_2 m_0 (B(y, r/2) ) \leq C_2 m_0(B(x, r)) ,
\end{align*}
where $C_2=C_1/c_D$.

\smallskip

(c). Let $\lambda_0 \geq 2$ and $C_3>1$ be the constants of (QRVD) for $(\sX,d,m)$, and $c_D\in (0, 1)$ the constant in
\eqref{e:ahlfors}.  
Without loss of generality, we assume $1/c_D > C_3$. 
 Observe that $d(x, \overline D\setminus \{x\})\geq  d(x, \sX\setminus \{x\})$. 

For each $x\in D$ and $d(x,\overline{D}\setminus \{x\})\leq r<\diam(D)/\lambda_0$, by  the Ahlfors regularity of $D\subset \sX$ and  (QRVD) of $(\sX,d,m)$,
\[
m_0(B(x,\lambda_0r))\geq c_D^{-1} \,V(x,\lambda_0 r)\geq C_4 V(x,r) \geq C_4m_0(B(x,r)),
\]
where $C_4=(c_D\cdot C_3)^{-1}>1$. For $x\in \partial D$ and $0<r<\diam(D)/\lambda_0$, we choose $y\in D$ such that $d(x,y)<r/4$. Note that $d(y,\overline{D}\setminus \{y\}) \leq d(x, y) <r/4$. Thus we have from the above that 
\[
m_0(B(x,\lambda_0r))\geq m_0(B(y,\lambda_0r/2))\geq C_4 m_0(B(y,r/2))\geq C_4m_0(B(x,r/4)). 
\] 
This established the (QRVD) property of  $(\bD,d,m_0)$.
\end{proof}

\subsection{Main result of this paper}

In the rest of this paper, $(\sE,\sF)$ is a symmetric regular Dirichlet form on $L^2(\sX; m)$
of pure jump type with jump measure $J(dx,dy)=J(x,y)m(dx)m(dy)$, where 
\begin{equation}\label{jumpDirichlet}
\begin{split}
\sF& :=  \left\{ u\in L^2 (\sX; m): \int_{\sX\times \sX} (u(x)-u(y))^2 J(dx, dy)  <\infty \right\}, \\
\sE(u, v)& := \frac12  \int_{\sX\times \sX} (u(x)-u(y)) (v(x)-v(y))  J(dx, dy)\quad \hbox{for } u, v \in \sF.		
\end{split}
\end{equation}
In  this paper, we always represent a function $f\in \sF$ by its $\sE$-quasi-continuous version, which is unique up to an $\sE$-polar set. 
It is well known that there exists an $m$-symmetric Hunt process $X$ on $\sX\setminus \sN$ associated with 
$(\sE, \sF)$, where $\sN\subset \sX$ is a Borel measurable properly exceptional set of $X$ which in particular is $\sE$-polar.
 We call $(\sX, d, m, \sE, \sF)$ a metric measure Dirichlet space (MMD).
We refer the reader to  \cite{CF, FOT} for the above facts as well as other basic properties and terminologies of Dirichlet forms.

Let $\phi:[0,\infty)\to [0,\infty)$ be  a strictly increasing function such that $\phi(0)=0$, $\phi(1)=1$ and there exist constants $c_1,c_2>0$ and $\beta_2\geq \beta_1>0$ such that 
\begin{equation}\label{eqnphi}
c_1(\frac{R}{r})^{\beta_1}\leq \frac{\phi(R)}{\phi(r)}\leq c_2(\frac{R}{r})^{\beta_2}\ \hbox{ for all }0<r\leq R<\infty.
\end{equation}

\begin{definition}\label{D:2.6}  \rm 
\begin{enumerate}[\rm (a)]
\item We say ${\bf J}_{\phi}$ holds if 
\[
\frac{C_1}{V(x,d(x,y))\phi(d(x,y))}\leq J(x,y)\leq \frac{C_2}{V(x,d(x,y))\phi(d(x,y))}\ \hbox{ for every }x\not= y\in \sX.
\]
We say that ${\bf J}_{\phi, \leq}$ (resp. ${\bf J}_{\phi, \geq}$) holds if the upper bound (resp. lower bound) in the above formula holds.
		
\item Denote by $X$ the $m$-symmetric Hunt process on $\sX$ associated with $(\sE,\sF)$ on $L^2(\sX;m)$. We say the process $X$ satisfies two-sided heat kernel estimates of mixed type ${\bf HK}(\phi)$ if 
 it has a transition density function $p(t,x,y)$ on $(0, \infty) \times (\sX \setminus \sN) \times (\sX \setminus \sN)$
 with respect to the measure $m$ so that for every $(t, x, y) \in (0, \infty) \times (\sX \setminus \sN) \times (\sX \setminus \sN)$,
\begin{align}
	c_1 \Big( \frac{1}{V(x,\phi^{-1}(t))}\wedge  &\frac{t}{V(x,d(x,y))\phi(d(x,y))}  \Big) \nonumber\\
	& \, \leq p(t,x,y)  \, \leq \, c_2 \Big(  \frac{1}{V(x,\phi^{-1}(t))}\wedge \frac{t}{V(x,d(x,y))\phi(d(x,y))}  \Big). \label{e:HK}
\end{align}
 We say ${\bf UHK}(\phi)$  holds if the upper bound in \eqref{e:HK}  holds.

\item We define the relative capacity for $A\subsetneq B\subsetneq\sX$ by
\[
{\rm Cap}(A,B):=\inf\{\mcE(f,f):\,f\in\sF,\,f|_A=1,\,f|_{\sX\setminus B}=0\}. 
\]
We say that condition {\rm Cap}$(\phi)_{\leq}$ holds for $(\mcE,\mcF)$ if 
there is $c>0$ such that for every $x\in \sX$ and $r<\diam(\sX)/3$,
\begin{equation*} 
\operatorname{Cap}\big(B(x,r),B(x,2r)\big)\leq c\frac{m(B(x,r))}{\phi(r)}.  
\end{equation*}

\item Let $U\subset V$ be open sets of $\sX$ with $U\subset\overline{U}\subset V$. We say a non-negative bounded measurable function $\varphi$ is a cut-off function for $U\subset V$, if $\varphi=1$ on $U$, $\varphi=0$ on $\sX\setminus V$ and $0\leq \varphi\leq 1$ on $\sX$. 

\item We say condition ${\rm CSJ}(\phi)$ holds for $(\mcE,\mcF)$ if there exist constants $c,C_0\in (0,1]$ and $C_1,C_2>0$ such that for every $0<r\leq R<c\,\diam(\sX)$, almost all $x_0\in \sX$ and any $f \in \sF$, there exists a cut-off function $\varphi \in \mcF$ for $B(x_0,R)\subset  B(x_0,R+r)$ so that the following holds:
\begin{align*}
	&\quad\  \int_{B (x_0,R+(1+C_0)r)}f^2d\Gamma(\varphi,\varphi) \\
	&\leq  C_1\int_{U\times U^*}\big(f(x)-f(y)\big)^2J(dx,dy)+\frac{C_2}{\phi(r)}\int_{B (x_0,R+(1+C_0)r)}f(x)^2 m(dx),
\end{align*}
where  $U=B(x_0,R+r)\setminus B(x_0,R)$, $U^*=B(x_0,R+(1+C_0)r)\setminus B(x_0,R-C_0r)$ and
\[
\Gamma(u,v)(dx)=\int_\sX \big(u(x)-v(y)\big)\big(u(x)-v(y)\big)J(dx,dy)\quad\hbox{ for }u,v\in\sF.
\]
\end{enumerate}
\end{definition}

\begin{remark} \label{R:2.7} \rm 
\begin{enumerate}[\rm (i)]
\item  By \cite[Proposition 3.1(2)]{CKW}, the lower bound in ${\bf HK}(\phi)$ implies that 
 $(\sE, \sF)$ is conservative, that is,
the process $X$ has infinite lifetime $\bP_x$-a.s. for every 
$x\in \sX\setminus\mathcal{N}$. Moreover, by \cite[Theorem 1.13 and Lemma 5.6]{CKW}, (VD) and ${\bf HK}(\phi)$ 
imply that the transition density function $p(t, x, y)$ is jointly H\"older continuous in $ (x, y) $
for every $t > 0$ and so the two-sided estimates \eqref{e:HK} holds for all $x, y \in \sX$. 
This in particular implies that, under (VD) and ${\bf HK}(\phi)$,
 the Hunt process $X$ can be refined to start from every point in $\sX$, and, by 
 \eqref{e:HK},  $X$ is a Feller process having strong Feller
 property on $\sX$. Furthermore, by \cite[Theorem 1.18 and Corollary 1.3]{CKW2} and \cite[Theorems 1.20 and 1.21]{Ma}, under 
  (VD), (QRVD) and ${\bf HK}(\phi)$, the transition density function $p(t, x, y)$ of $X$ is jointly H\"older continuous
  in $(t, x, y)$ on $ (0, \infty)\times \sX \times \sX$. 
 
\item
 By \cite[Proposition 3.3]{CKW}, under {\rm (VD)}, 
\begin{equation}\label{e:UHK}
 {\bf UHK}(\phi) + \hbox{$(\sE, \sF)$ is conservative} \Longrightarrow  {\bf J}_{\phi,\leq}
\quad \hbox{and} \quad   {\bf HK}(\phi)   \Longrightarrow  {\bf J}_{\phi}.
\end{equation}
We know 
from   Propositions  3.6 and Definition 1.5(ii) of \cite{CKW} that 
\begin{equation}\label{e:CSJ}
{\rm (VD)} +  {\bf UHK}(\phi) + \hbox{$(\sE, \sF)$ is conservative}  \Longrightarrow {\rm CSJ}(\phi) , 
\end{equation}
and 
from \cite[Proposition 2.3]{CKW} that 
\begin{equation} \label{e:Cap}
{\rm (VD)}+ {\bf J}_{\phi,\leq}  + {\rm CSJ}(\phi) \Longrightarrow {\rm Cap}(\phi)_{\leq} .
\end{equation}
It is shown  in \cite[Theorem 1.20]{Ma} that \cite[Theorem 1.13]{CKW} holds with condition (RVD)
being relaxed to (QRVD) by constructing  an  auxiliary space  to  smooth out the atoms.
 That is, the following equivalence holds when $(\sX, d)$ is unbounded: 
\begin{equation}\label{e:2.6a}
	\rm (VD)+(QRVD)+{\bf HK}(\phi)\Longleftrightarrow  (VD)+(QRVD)+{\bf J}_\phi+{\rm CSJ}(\phi).
\end{equation}
 
 \item Although it is assumed in \cite{CKW} that the state space  is unbounded,  the results there hold for bounded state spaces as well with some minor modifications  and  also some simplifications; see \cite[Remark 8.3]{CC} for details. 
 Using the construction of an auxiliary space  that smooths out the atoms as in \cite{Ma}, 
 the equivalence \eqref{e:2.6a} holds for bounded $(\sX, d)$ as well. 
 
 \item Building on recent progress in the local setting due to S.~Eriksson-Bique \cite{E}, 
 in a very recent preprint,  Murugan showed  \cite[Theorem 1.6]{Mathav2} that 
 \begin{equation} \label{e:Cap2}
  {\rm (VD)}+ {\bf J}_{\phi} + {\rm Cap}(\phi)_{\leq}  \Longrightarrow  {\rm CSJ}(\phi) 
 \end{equation}
 This together with \eqref{e:Cap} and \eqref{e:2.6a} yields 
 \begin{eqnarray}
	\rm (VD)+(QRVD)+{\bf HK}(\phi)
	&\Longleftrightarrow & {\rm (VD)+(QRVD)} +{\bf J}_\phi+{\rm CSJ}(\phi)  \nonumber \\
	&\Longleftrightarrow & {\rm (VD)+(QRVD) } +{\bf J}_\phi+{\rm Cap}(\phi)_{\leq}.  \label{e:Cap3}
\end{eqnarray}
In this paper, we do not use the implication \eqref{e:Cap2} nor the second equivalence in \eqref{e:Cap3}.
\end{enumerate}
\end{remark}

\medskip

Let $D\subset\sX$ be an open subset. 
  Let $m_0$ be the Radon measure on $\bD$ defined by
$$
m_0(A)=m(A\cap D) \quad \hbox{for any  }  A\in \sB ( \bD). 
$$
Let $\sF^D :=\{u\in \sF: u=0 \hbox{ $\sE$-q.e. on } \sX \setminus D\}$.
It is well known \cite{CF, FOT}  that $(\sE, \sF^D)$ is the Dirichlet form on $L^2(D; m_0)$ for the part process $X^D$ of $X$ killed upon leaving $D$.
 Denote by $({\bar \sE}, {\bar \sF})$ the  active reflected Dirichlet space of $(\sE, \sF^D)$, that is, 
 \begin{align}
{\bar \sF}&:= \left\{ u\in L^2(\bD;m_0): \int_{D\times D} (u(x)-u(y))^2 J(dx, dy) <\infty \right\}, \label{e:2.8} \\
{\bar \sE}(u, v)&:=\frac12  \int_{D\times D} (u(x)-u(y)) (v(x)-v(y))  J(dx, dy)\quad \hbox{for } u, v \in {\bar \sF} ; \label{e:2.9}
\end{align}
see \cite{C,CF}. It is shown in \cite{C} that $(\bar\sE, \bar\sF)$ is always a Dirichlet form on $L^2(\overline D; m_0)$. We call $(\overline D, d, m_0, \bar \sE, \bar\sF)$ a reflected metric measure Dirichlet space.
Note that $(\bar \sE, \bar \sF)$ depends on $D$ but for notational simplicity, we do not indicate  $D$ in the notation.
When $D=\sX$,  $(\bar \sE, \bar \sF)$ is just $(\sE, \sF)$. 
  The goal of the paper is to find a sufficient condition for a proper open subset $D$ of $\sX$  so that   $({\bar \sE}, {\bar \sF})$  
  is a regular Dirichlet form on $L^2(\bar D; m_0)$ and it has two-sided heat kernel bounds ${\bf HK}(\phi)$.

\medskip

The following is  the main theorem of this paper. 

\begin{theorem}\label{mainthm}
Suppose  that $(\sX,d,m,\mcE,\mcF)$ satisfies {\rm(VD)}, {\rm(QRVD)} and ${\bf HK}(\phi)$, and that $D$ is an Ahlfors regular open subset of $\sX$. Then the reflected MMD $(\bD,d,m_0,\bar \sE, \bar \sF)$ is regular on $L^2(\overline D; m_0)$
and satisfies {\rm(VD)}, {\rm(QRVD)} and ${\bf HK}(\phi)$. 
\end{theorem}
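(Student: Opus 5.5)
The plan follows the reduction outlined in the introduction. By Lemma \ref{L:2.5}, $(\bD,d,m_0)$ satisfies (VD) and (QRVD), and $m(\partial D)=0$. By Remark \ref{R:2.7}(ii)--(iii), the equivalence \eqref{e:2.6a} holds on $(\bD,d,m_0)$ once we know that $(\bar\sE,\bar\sF)$ is a regular pure-jump Dirichlet form. So it suffices to prove three things: regularity, ${\bf J}_\phi$ for the jump kernel $J|_{D\times D}$, and ${\rm CSJ}(\phi)$ for $(\bar\sE,\bar\sF)$.

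${\bf J}_\phi$ is immediate. The original ${\bf J}_\phi$ holds by \eqref{e:UHK}, and $V(x,r)\asymp m_0(B(x,r))$ for $x\in\bD$ and $r<\diam(D)/2$ by Ahlfors regularity and (VD). For larger $r$, both volumes are comparable to the volume of a ball covering $D$, by \eqref{e:vd1}. Hence $J(x,y)\asymp 1/(m_0(B(x,d(x,y)))\phi(d(x,y)))$ on $D\times D$.

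The core is an extension operator $E:\bar\sF\to\sF$ with $Eu=u$ on $D$ and scale-invariant local bounds of the form
\[
\int_{B(x_0,R)}(Eu)^2\,dm\le C\int_{B_D(x_0,CR)}u^2\,dm,
\]
\[
\int_{B(x_0,R)\times \sX}(Eu(x)-Eu(y))^2J(dx,dy)\le C\int_{B_D(x_0,CR)\times D}(u(x)-u(y))^2J(dx,dy)+\frac{C}{\phi(R)}\int_{B_D(x_0,CR)}u^2\,dm .
\]
I would build $E$ as follows.
\begin{enumerate}[(1)]
\item Take a Whitney-type decomposition of $\sX\setminus\bD$ into balls $B(x_i,r_i)$ with $r_i\asymp d(x_i,D)$, using Lemma \ref{lemmavd2} at dyadic scales, so the overlap is bounded.
\item Attach a partition of unity $\{\psi_i\}\subset\sF$ with $\psi_i$ supported in $B(x_i,2r_i)$ and $\mcE$-energy at most $C\,V(x_i,r_i)/\phi(r_i)$. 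These functions come from ${\rm Cap}(\phi)_{\le}$, which follows from \eqref{e:Cap} and \eqref{e:CSJ}.
\item Choose points $y_i\in D$ with $d(x_i,y_i)\le 2r_i$.
\item Define $Eu=u$ on $D$, $Eu=\sum_i\psi_i\,\bar u_i$ off $\bD$, where $\bar u_i$ is the average of $u$ over $B_D(y_i,r_i)$; this set has mass $\asymp V(x_i,r_i)$ by Ahlfors regularity.
\end{enumerate}
No corkscrew is needed, since averages are taken over $B_D$ balls, which are large by \eqref{e:ahlfors}. The $L^2$ bound follows from Jensen and bounded overlap, because each $B_D(y_i,r_i)$ is used by boundedly many $i$ at comparable scales near a given region.

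The main obstacle is the energy estimate, which I would split into three parts. First, the cross term $D\times(\sX\setminus\bD)$: write $Eu(y)-u(x)=\sum_i\psi_i(y)(\bar u_i-u(x))$ and bound $(\bar u_i-u(x))^2$ by the average of $(u(z)-u(x))^2$ over $B_D(y_i,r_i)$. For $d(x,y)\gtrsim r_i$ we have $J(x,y)\asymp J(x,z)$, which controls this by the $D\times D$ energy. Second, near-diagonal pairs in the exterior with $d(y,y')\lesssim$ the Whitney scale: write $Eu(y)-Eu(y')=\sum_i(\psi_i(y)-\psi_i(y'))(\bar u_i-\bar u_{j})$ for a fixed neighbouring index $j$, use the energy of the $\psi_i$, and bound $(\bar u_i-\bar u_j)^2\lesssim \phi(r_i)V^{-1}\iint_{B_D\times B_D}(u(z)-u(z'))^2J$, since the two $B_D$ balls are within distance $\asymp r_i$. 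Third, off-diagonal exterior pairs: compare $\bar u_i$ and $\bar u_j$ directly through the jump kernel at distance $\asymp d(y,y')$, and sum using (VD) and the lower growth $\beta_1$ of $\phi$ in \eqref{eqnphi}. Points at distance $\gtrsim R$ produce the $\phi(R)^{-1}\int u^2$ term. Carefully summing the Whitney scales with bounded overlap and doubling is where most of the work lies.

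With $E$ in hand, I would proceed in two steps. For regularity: for $u\in\bar\sF\cap C_c(\bD)$, approximate $Eu$ in $\sF$ by functions in $\sF\cap C_c(\sX)$ using regularity of $(\sE,\sF)$ and restrict to $\bD$; restriction is a contraction from $\sE_1$ to $\bar\sE_1$, and $E$ bounds $\bar\sF$ globally. For continuity of $Eu$ near $\partial D$ one may instead first mollify. Density in $C_c(\bD)$ follows from the Stone--Weierstrass theorem applied to restrictions of $\sF\cap C_c(\sX)$. For ${\rm CSJ}(\phi)$: given $x_0\in\bD$, $f\in\bar\sF$ and scales $r\le R$, apply ${\rm CSJ}(\phi)$ in $\sX$ to $Ef$, possibly with $r$ shrunk by a constant factor, and restrict the cut-off $\varphi$ to $\bD$. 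Then $\int f^2d\bar\Gamma(\varphi,\varphi)\le\int (Ef)^2d\Gamma(\varphi,\varphi)$, and the right-hand side is converted back to $D$ by the local bounds for $E$, applied on the annuli $U,U^*$ (covered by balls of radius $\asymp r$ via Lemma \ref{lemmavd2}). This yields ${\rm CSJ}(\phi)$ with enlarged constants, and then \eqref{e:2.6a} finishes the proof.
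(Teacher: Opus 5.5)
Your reduction (Lemma \ref{L:2.5}, inheritance of ${\bf J}_\phi$, an extension operator with local bounds, transfer of ${\rm CSJ}(\phi)$, then \eqref{e:2.6a}) is the paper's. The gap is in the extension operator: as you build it, it does not come with the estimates you claim. This is exactly the difficulty the paper's key new ingredient addresses.

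You average over $A_i:=B_D(y_i,r_i)$ and close every estimate by Jensen plus ``bounded overlap''. The $A_i$ have bounded overlap only among Whitney balls of comparable radius. Across scales they do not: a point $z\in D$ near $\partial D$ can lie in $A_i$ for Whitney balls at order $\log(R/d(z,\partial D))$ different dyadic scales. This already happens for a half-space in $\R^n$ with $y_i$ taken near the boundary point closest to $x_i$. Nothing in your choice of $y_i$ prevents it; for uniform domains one would put the averaging ball in a corkscrew, which is unavailable here. Consequently:
\begin{itemize}
\item Jensen only gives $\int_{B(x_0,R)}(Eu)^2\,dm\lesssim\int u^2N\,dm$ with $N=\sum_i\1_{A_i}$ unbounded.
\item In your cross term, a pair $(x,z)\in D\times D$ is charged $\asymp J(x,z)$ once for every $i$ with $z\in A_i$ and $r_i\lesssim d(x,z)$.
\item In the off-diagonal exterior term, a pair $(z,w)$ is charged $J(z,w)$ for every pair of scales below $d(z,w)$.
\end{itemize}
None of these is bounded by $\bar\sE_1(u,u)$ by the argument you give.

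The missing idea is Proposition \ref{P:3.3}. Replace $\1_{A_i}$ by weights $0\le f_i\le\1_{A_i}$ with $\int_D f_i\,dm\asymp m(B_i)$ and, crucially, $\sum_i f_i\le\1_D$. They are chosen greedily from small to large scales. The mass already used inside $A_i$ by previously treated balls (all of radius $<2r_i$) is at most $\frac12 m(A_i)$, because those Whitney balls are disjoint and lie in $23B_i$. A weak $L^2$ limit handles infinitely many scales, and the weights are functions rather than sets since $m$ may have atoms. With $[u]_i$ defined through $f_i$, every Jensen step is followed by $\sum_i f_i\le 1$. The $L^2$ bound and all three energy pieces then close without multiplicity.

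Two further steps also fail as described.

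First, the transfer of ${\rm CSJ}(\phi)$ breaks down when $R\gg r$. The right-hand side of ${\rm CSJ}(\phi)$ in $\sX$ contains $\phi(r)^{-1}\int_{B(x_0,R+(1+C_0)r)}(Ef)^2\,dm$. It also contains the energy of $Ef$ at exterior points of the annuli lying at distance $\gg r$ from $\bD$. There $Ef$ is built from averages of $f$ at distance $\asymp d(\cdot,\bD)$, which can lie outside $B(x_0,R+Cr)$. So these terms cannot be bounded by quantities of $f$ on $B_D(x_0,R+Cr)$.

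The paper avoids this by transferring only the single-scale ball version ${\rm CSJB}(\phi)$, where $R=r$ and enlarging $\lambda r$ to $14\lambda r$ is harmless. It then recovers general $R\ge r$ inside $\bD$ by covering the annulus with small balls centred in $\bD$ (Lemma \ref{L:5.2}). For the same reason, the energy bound you need must localize both variables, $B\times B\to B_D\times B_D$ as in \eqref{e:4.1b}, rather than $B\times\sX\to B_D\times D$.

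Second, for bounded $D$ you must discard Whitney balls with $r_i\gtrsim\diam(D)$. Otherwise $m(A_i)\not\asymp m(B_i)$, and $Eu$ tends to the mean of $u$ far away, so $Eu\notin L^2$ when $m(\sX)=\infty$. After this truncation, $Eu\in\sF$ needs a separate argument.

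Finally, for regularity you should approximate an arbitrary $u\in\bar\sF$ via $Eu\in\sF$ and $\sF\cap C_c(\sX)$. No continuity of $Eu$ or mollification is involved.
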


\section{An extension operator}\label{S:3}
In this section, we construct an extension operator $\Ex:L^2(\bD;m_0)\to L^2(\sX;m)$ that has good properties on energy and $L^2$-norm.  This is a key step in establishing the ${\rm CSJ}(\phi)$ property 
for  active
 reflected Dirichlet space $(\bar \sE, \bar \sF)$ of \eqref{e:2.8}-\eqref{e:2.9} on $L^2(\bar D; m_0)$ when $D\subset \sX$ is Ahlfors regular.
Our method uses a Whitney cover of $\sX\setminus \overline{D}$. By \cite[Proposition 3.2]{Mathav}, for any non-empty open subset 
$U\subsetneq\sX$ and $\varepsilon\in (0,1/2)$, there is an $\varepsilon$-Whitney cover of $U$ in the following sense. 

\begin{definition}\label{D:3.1} \rm
 Let $\varepsilon\in (0,1/2)$ and $U$ be a non-empty open proper subset of $\sX$. We say   a collection $\{B(x_i,r_i):x_i\in U,r_i>0,i\in I\}$ of open balls in $U$ is an $\varepsilon$-Whitney cover of $U$ if it satisfies the following properties.
\begin{enumerate}[\rm (i)]
\item The collection of open balls $\{B(x_i,r_i):\,i\in I\}$ are pairwise disjoint.
 		
\item $r_i=\frac{\varepsilon}{1+\varepsilon}d(x_i,\sX\setminus U)$ for all $i\in I$. 
 		
\item $\bigcup_{i\in I}B\big(x_i,2(1+\varepsilon)r_i\big)=U$.  
\end{enumerate}
\end{definition}

Let $D\subset \sX$ be an open subset so that $\overline D \subsetneq\sX$.
In the rest of this paper, we fix a $\frac{1}{4}$-Whitney cover  $\{B(x_i,r_i):x_i\in \sX\setminus \overline{D},r_i>0,i\in I\}$ of $\sX\setminus \overline{D}$. 
For notational simplicity,  we write for each $i\in I$ and $\lambda>0$, 
\begin{equation}\label{e:3.1a}
B_i :=B(x_i,r_i) \quad \hbox{ and } \quad \lambda B_i :=B(x_i,\lambda r_i) .
\end{equation} 
By Definition \ref{D:3.1}, we have the following properties. 
\begin{enumerate}[(D.i)]
\item The collection of sets $\{B_i:\,i\in I\}$ are pairwise disjoint.

\item $d(x_i, \overline D )=5r_i$ for each $i\in I$.

\item $\sX\setminus\overline{D}=\bigcup_{i\in I}\frac52B_i$.
\end{enumerate}

Let
\[\Lambda := \{ i\in I:0<r_i<\diam(D)/2\}.\medskip \]

 The following lemma gives some basic properties of this Whitney cover. 

\begin{lemma}\label{L:3.2}
Suppose that $(\sX, d, m)$ is {\rm(VD)}.
\begin{enumerate}[\rm (a)]
\item If $1<\lambda<5$ and $\lambda B_i\cap \lambda B_j\neq \emptyset$ for some $i,j\in I$, then 
\[
\frac{5-\lambda}{5+\lambda}r_i\leq r_j\leq \frac{5+\lambda}{5-\lambda}r_i.
\] 
		
\item For $1<\lambda<5$, there is a positive constant $C$ depending on $\lambda$ and the parameters of \eqref{e:vd} so that $\#\{j\in I:\,\lambda B_j\cap \lambda B_i\neq \emptyset\}\leq C$ for every $i\in I$.  
		
\item If $x\in \lambda B_i$ for some $i\in I$ and $0<\lambda<5$, then 
\[
\frac1{5+\lambda}d(x,\bD)\leq r_i\leq \frac1{5-\lambda}d(x,\bD).
\] 
		
\item For $0<\lambda<5$, there is a positive constant $C$ depending on $\lambda$ and the parameters of \eqref{e:vd} so that $
\#\{i\in I:\,x\in\lambda B_i\}\leq C$ for every $x\in \sX\setminus\bD$. 

\item Suppose that $D$ is bounded. Then, for every $r,s>0$, 
\[
\#\{i\in I:\,r_i<r,\,\lambda B_i\setminus (\cup_{x\in \overline{D}}B(x,s))\neq\emptyset\}<\infty. 
\]
\end{enumerate}
\end{lemma}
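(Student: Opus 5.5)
The whole lemma rests on (D.ii), $d(x_i,\bD)=5r_i$, together with the triangle inequality; counting statements then follow from (VD) through disjointness of the balls $B_j$ (D.i). I would prove (c) first, since (a) and (e) reduce to it.

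For (c): if $x\in\lambda B_i$, then $d(x,x_i)<\lambda r_i$. Since $d(\cdot,\bD)$ is $1$-Lipschitz,
\[
5r_i-\lambda r_i\le d(x,\bD)\le 5r_i+\lambda r_i ,
\]
which rearranges to the claim. For (a): pick $x\in\lambda B_i\cap\lambda B_j$. Applying (c) to $i$ and to $j$ gives $(5-\lambda)r_i\le d(x,\bD)\le(5+\lambda)r_j$, so $r_i\le\frac{5+\lambda}{5-\lambda}r_j$. The other inequality follows by symmetry.

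For (b) and (d) I use the packing argument of Lemma \ref{lemmavd2}(a). For (b), fix $i$ and let $j$ satisfy $\lambda B_j\cap\lambda B_i\ne\emptyset$. By (a), $r_j\asymp r_i$ with constants depending only on $\lambda$. Then $d(x_i,x_j)<\lambda(r_i+r_j)\le Cr_i$, so $B_j\subset B(x_i,C'r_i)$. Also $B(x_i,C'r_i)\subset B(x_j,C''r_j)$, so by \eqref{e:vd}, $V(x_j,r_j)\ge c\,V(x_i,C'r_i)$. The $B_j$ are pairwise disjoint by (D.i), so summing their measures inside $B(x_i,C'r_i)$ bounds the number of such $j$ by $1/c$. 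For (d), fix $x$ and let $i$ satisfy $x\in\lambda B_i$. By (c), every such $r_i$ is comparable to $\delta:=d(x,\bD)>0$, so $B_i\subset B(x,C\delta)$, and $V(x_i,r_i)\ge c\,V(x,C\delta)$ by \eqref{e:vd} (or Lemma \ref{lemmavd1}). Disjointness again bounds the count.

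Part (e) is the only one needing the boundedness of $D$, and it is the main, though still mild, obstacle. Take $i$ with $r_i<r$ and a point $y\in\lambda B_i$ with $d(y,\bD)\ge s$. Applying (c) at $y$ gives $r_i\ge s/(5+\lambda)$. Also $d(x_i,\bD)=5r_i<5r$, so $x_i$ lies in the bounded set $E:=\{z:d(z,\bD)<5r\}$. Its diameter is at most $\diam(D)+10r$, which is finite because $D$ is bounded. The balls $B_i$ are disjoint, have radii bounded below by $s/(5+\lambda)$, and lie in the bounded set $\{z:d(z,\bD)<6r\}$. Fixing a point $z_0\in D$, each such $B_i$ lies in $B(z_0,R)$ for some finite $R$, and Lemma \ref{lemmavd1} gives $V(x_i,r_i)\ge c\,V(z_0,R)$ with $c>0$ depending on $r,s,R$. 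Hence disjointness allows only finitely many such $i$.

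Two points need care. In (c) for $\lambda\ge5$ the lower bound fails, but (e) only uses the upper-bound direction, which holds for every $\lambda$; (e) is stated with the $\lambda$ for which the lemma is used, so I would treat $\lambda$ as arbitrary there. If $\lambda\ge5$, the lower bound on $r_i$ comes from $s\le d(y,\bD)\le(5+\lambda)r_i$, which is still valid.
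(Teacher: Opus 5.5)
Your proposal is correct and is essentially the paper's proof. Parts (a) and (c) come from (D.ii) plus the triangle inequality: you route (a) through (c) rather than using $d(x_i,x_j)\le\lambda(r_i+r_j)$ directly, which is the same computation. Parts (b), (d), (e) use the same volume-doubling packing argument with the pairwise disjoint balls $B_i$ inside a comparable ball. Your remark that (e) only needs $d(y,\bD)<(5+\lambda)r_i$, which holds for every $\lambda>0$, is a fair clarification of a point the paper leaves implicit.
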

\begin{proof}
	(a). Since $\lambda B_i\cap \lambda B_j\neq \emptyset$, we have $d(x_i,x_j)\leq \lambda(r_i+r_j)$. Hence, by (D.ii),
	\[
	5r_j=d(x_j,\bD)\leq d(x_i,\bD)+d(x_i,x_j)\leq 5r_i+(\lambda r_i+\lambda r_j).
	\]
	It follows immediately that $r_j\leq\frac{5+\lambda}{5-\lambda}r_i$. Symmetrically, we also have $r_i\leq \frac{5+\lambda}{5-\lambda}r_j$.
	
	(b). By (a), if $\lambda B_i\cap\lambda B_j\neq\emptyset$, then 
	\[
	B_j\subset B(x_i,\lambda r_i+\lambda r_j+r_j)\subset B(x_i,\lambda r_i+\lambda\frac{5+\lambda}{5-\lambda}r_i+\frac{5+\lambda}{5-\lambda}r_i)=\frac{5+11\lambda}{5-\lambda}B_i
	\] 
	and symmetrically $B_i\subset\frac{5+11\lambda}{5-\lambda}B_j$. 
	Hence by (D.i) and \eqref{e:vd}, 
	\begin{align*}
		&\quad\ C_1m(B_i)\geq m(\frac{5+11\lambda}{5-\lambda}B_i)\geq \sum_{j\in I:\,\lambda B_i\cap\lambda B_j\neq\emptyset}m(B_j)\\
		&\geq \sum_{j\in I:\,\lambda B_i\cap\lambda B_j\neq\emptyset}C_1^{-1}m(\frac{5+11\lambda}{5-\lambda}B_j)\geq C_1^{-1}\#\{j\in I:\,\lambda B_j\cap \lambda B_i\neq \emptyset\}m(B_i),
	\end{align*} 
	where $C_1=c_1(\frac{5+11\lambda}{5-\lambda})^{d_1}$.
	Hence, $\#\{j\in I:\,\lambda B_j\cap \lambda B_i\neq \emptyset\}\leq C_1^2$. 
	
	(c). This follows immediately from the following observation by (D.ii): 
	\[
	(5-\lambda)r_i\leq -d(x,x_i)+d(x_i,\bD)\leq d(x,\bD)\leq d(x,x_i)+d(x_i,\bD)\leq (5+\lambda)r_i.
	\]
	
	(d). By (c), if $x\in \lambda B_i$, 
	\begin{align*}
		B_i&\subset B(x,\lambda r_i+r_i)\subset B(x,\frac{1+\lambda}{5-\lambda}d(x,\bD))\\
		&\subset B(x_i,\lambda r_i+\frac{1+\lambda}{5-\lambda}d(x,\bD))\subset B(x_i,\lambda r_i+\frac{(1+\lambda)(5+\lambda)}{5-\lambda}r_i)=\frac{5+11\lambda}{5-\lambda}B_i.
	\end{align*}
	Hence, by \eqref{e:vd}, 
	\[
	C_2\,m(B_i)\geq m(\frac{5+11\lambda}{5-\lambda}B_i)\geq m(B(x,\frac{1+\lambda}{5-\lambda}d(x,\bD))),
	\]
	where $C_2=c_1(\frac{5+11\lambda}{5-\lambda})^{d_1}$. Then, by (D.i),
	\begin{align*}
		m(B(x,\frac{1+\lambda}{5-\lambda}d(x,\bD)))
		&\geq\sum_{i\in I:\,x\in \lambda B_i}m(B_i)\\
		&\geq \#\{i\in I:\,x\in\lambda B_i\}C_2^{-1} m(B(x,\frac{1+\lambda}{5-\lambda}d(x,\bD))).
	\end{align*}
	Hence, $\#\{i\in I:\,x\in\lambda B_i\}\leq C_2$. 
	
	(e). We fix $z\in D$, and we write $J=\{i\in I:\,r_i<r,\,\lambda B_i\setminus (\cup_{x\in \overline{D}}B(x,s))\neq\emptyset\}$ for short. 
	 If $r_i<r$, then
	\begin{align*}
	d(x,x_i)\leq \diam(D)+d(x_i,\bD)\leq\diam(D)+5r\ \hbox{ and }\ 
	B_i\subset B(x,\diam(D)+6r). 
	\end{align*}
	By (c), $r_i>s/(5+\lambda)$ for each $i\in J$. Hence, noting that $B_i,i\in I$ are disjoint and by \eqref{e:vd}
	\begin{align*}
	\#J\cdot m(B(x,\diam(D)+6r))\leq\sum_{i\in J}C_3m(B_j)\leq C_3\,m(B(x,\diam(D)+6r)),
	\end{align*}
	where $C_3=c_1(\frac{(2\diam(D)+11r)(5+\lambda)}{s})^{d_1}$. Hence, $\#J\leq C_3$. 
\end{proof}

\begin{proposition}\label{P:3.3}
Suppose that $(\sX, d, m)$ is (VD) and $D\subset \sX$
is Ahlfors regular. There is a  countable collection $\{f_i:i\in \Lambda\}\subset L^2(\sX;m)$ such that the following properties hold. 
\begin{enumerate}[\rm (D.1)]
\item $0\leq f_i\leq 1$ and $\sum\limits_{i\in \Lambda} f_i\leq \1_D$  $m$-a.e.  

\item $\diam({\rm supp}[f_i])\leq 2r_i$ for each $i\in \Lambda$. 
	 
\item $d(x_i,{\rm supp}[f_i])\leq 7r_i$ for each $i\in \Lambda$.
	
\item There is $C>1$ such that that $C^{-1}m(B_i)\leq \int_Df_i (x) m (dx)\leq C\,m(B_i)$ for each $i\in \Lambda$.
\end{enumerate}
Here, ${\rm supp}[f_i]$ denotes the support of $f_i$, that is, 
the smallest closed subset of $\sX$ such that $f=0$ $m$-a.e. on its complement. 
\end{proposition}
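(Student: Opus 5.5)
The plan is to attach to each Whitney ball $B_i$, $i\in\Lambda$, a small ball of $D$ sitting next to it. Inside that ball we carve out a fractional piece $f_i$ of $\1_D$. The constraint $\sum_i f_i\le \1_D$ is a packing problem: one point of $D$ lies in the nearby balls of infinitely many Whitney balls at different scales. So a naive choice such as $f_i=c\1_{B_D(z_i,r_i)}$ fails, since the overlaps are bounded at each dyadic scale but not summable over scales. I would resolve this with a Carleson packing estimate and a Hall-type fractional matching.

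\emph{Step 1 (anchors).} For $i\in\Lambda$, (D.ii) gives $d(x_i,\bD)=5r_i$, so there is $z_i\in D$ with $d(x_i,z_i)<6r_i$. Put $A_i:=B_D(z_i,r_i)$. Any $f_i$ with $0\le f_i\le \1_{A_i}$ satisfies (D.2), since ${\rm supp}[f_i]\subset \overline{B(z_i,r_i)}$ has diameter at most $2r_i$. Provided $f_i\not\equiv 0$, it also satisfies (D.3), since $d(x_i,{\rm supp}[f_i])\le 6r_i+r_i=7r_i$. Because $r_i<\diam(D)/2$, Ahlfors regularity and (VD) give $m(A_i)\ge c_D V(z_i,r_i)\asymp m(B_i)$ (Lemma \ref{lemmavd1}). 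Hence the upper bound in (D.4) is automatic, and only the lower bound $\int f_i\,dm\ge c\,m(B_i)$ competes with (D.1).

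\emph{Step 2 (Carleson packing).} Take $y\in \bD$ and $\rho>0$, and consider the indices $i\in\Lambda$ with $A_i\cap B(y,\rho)\ne\emptyset$ and $r_i\le \rho$. Each such $B_i$ lies in $B(y,9\rho)\setminus \bD$, and the $B_i$ are pairwise disjoint by (D.i). Therefore
\[
\sum_{i} m(B_i)\le m(B(y,9\rho))\le C\, m(B_D(y,\rho)),
\]
using (VD) together with Ahlfors regularity (Lemma \ref{L:2.5}(b)), at least for $\rho$ below $\diam(D)/2$. For larger $\rho$ one uses $r_i<\diam(D)/2$ and $D$ itself as the reference set. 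This is the key geometric input: the Whitney balls outside $D$ near $y$ have total mass controlled by the mass of $D$ near $y$, uniformly over all scales.

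\emph{Step 3 (Hall condition and fractional matching).} Fix a finite $F\subset\Lambda$. Run a Vitali-type greedy selection on the balls $B(z_i,r_i)$, $i\in F$, picking them in order of decreasing radius. This produces disjoint selected balls $B(z_k,r_k)$ such that every $A_i$, $i\in F$, lies in $B(z_k,3r_k)$ for some selected $k$ with $r_i\le r_k$. Step 2 applied with $\rho=3r_k$ then gives
\[
\sum_{i\in F} m(B_i)\le C\sum_k m(B_D(z_k,r_k))\le C\, m\Big(\bigcup_{i\in F}A_i\Big).
\]
This is exactly the Hall/max-flow–min-cut condition for the bipartite capacity problem in which each $i$ demands $c\,m(B_i)$ with $c=1/C$, and each point of $D$ has capacity density $1$. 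After discretizing $D$ by a fine measurable partition, the finite max-flow theorem yields measurable densities $0\le f_i^F\le\1_{A_i}$ with $\sum_{i\in F}f^F_i\le 1$ and $\int f^F_i\,dm\ge c\,m(B_i)$.

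\emph{Step 4 (limit).} Exhaust $\Lambda$ by finite sets $F_n$, which is possible since $\Lambda$ is countable. Take weak-$*$ limits in $L^\infty(m)$ along a diagonal subsequence. This is legitimate because each $A_i$ has finite measure, so $\int f_i\,dm$ passes to the limit. The constraints $0\le f_i\le \1_{A_i}$, $\sum_i f_i\le\1_D$ (tested against nonnegative $L^1$ functions on finite subsums) and $\int f_i\,dm\ge c\,m(B_i)$ are all preserved. This gives (D.1)–(D.4).

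I expect the main obstacle to be Step 3: verifying the Hall condition cleanly from the packing estimate. In particular, the covering must respect both the scale restriction $r_i\le\rho$ in Step 2 and the cutoff at $\diam(D)/2$ in Ahlfors regularity, and the continuous max-flow must be rigorously reduced to the finite one.
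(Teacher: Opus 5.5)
Your proposal is correct, but it reaches (D.1)--(D.4) by a different mechanism than the paper.

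\textbf{The paper's route.} It uses the same anchors as you: $y_i\in D$ with $d(x_i,y_i)\le 6r_i$, and $A_i=B_D(y_i,r_i)$. Instead of verifying a global Hall condition, it allocates mass greedily in order of \emph{increasing} scale. With $\Lambda_l=\{i: 2^l\le r_i<2^{l+1}\}$ ordered lexicographically, it chooses $0\le f_i\le \1_{A_i}\bigl(1-\sum_{\text{earlier } j}f_j\bigr)$ with $\int f_i\,dm=\frac{C_2}{2C_1C_3}m(A_i)$. Any earlier $j$ whose $A_j$ meets $A_i$ has $r_j<2r_i$, so $B_j\subset 23B_i$. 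Disjointness of the Whitney balls and doubling then show that the mass already consumed inside $A_i$ is at most $\frac12 m(A_i)$. This is exactly your Carleson packing estimate, but used only at the scale of the ball being processed, with the slack absorbed into the constant. Hence no Vitali covering and no supply--demand theorem are needed. Since there is no smallest scale, the paper starts the recursion at scale $2^k$ and takes weak $L^2$ limits as $k\to-\infty$; this plays the role of your finite exhaustion in Step 4.

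\textbf{Comparison.} Your route explains \emph{why} such $f_i$ exist: the packing bound is precisely the Hall condition, and the argument does not depend on any ordering. The paper's route is more elementary and fully self-contained.

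\textbf{Two remarks on your write-up.} First, the ``continuous max-flow'' obstacle you anticipate is not a real one, and no fine partition is needed. Use the atoms $P_\sigma=\bigcap_{i\in\sigma}A_i\setminus\bigcup_{i\in F\setminus\sigma}A_i$ for $\emptyset\neq\sigma\subseteq F$. They give an exact finite bipartite network whose cut condition is your inequality applied to every $S\subseteq F$, which your Vitali argument provides since $F$ is arbitrary. Given a feasible flow $t_{i\sigma}$, set $f_i:=\sum_\sigma t_{i\sigma}\,m(P_\sigma)^{-1}\1_{P_\sigma}$, omitting atoms of measure zero.

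Second, Step 3 never needs the large-$\rho$ case of Step 2. If $i$ is assigned to a selected $k$, then $B_i\subset B(z_k,9r_k)$. Hence $\sum_{i\to k}m(B_i)\le V(z_k,9r_k)\le C\,V(z_k,r_k)\le C c_D^{-1}m(A_k)$. This uses Ahlfors regularity only at radius $r_k<\diam(D)/2$ and at the point $z_k\in D$.
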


\begin{proof}
For $(a,b), (c,d) \in \R^2$, we say $(a,b)>(c,d)$ in dictionary order in the sense that either $a>c$ or $a=c,b>d$.
	
Recall the definition of the $\frac14$-Whitney cover  $\{B_i:=B(x_i,r_i):x_i\in \sX\setminus \overline{D},r_i>0,i\in I\}$ of $\sX\setminus \overline{D}$ from \eqref{e:3.1a}.	For each $i\in\Lambda$, by the property (D.ii), there exists $y_i\in D$ such that $d(x_i,y_i)\leq 6r_i$. By \eqref{e:ahlfors} and 
 \eqref{e:vd1}, there are constants $C_1\geq 1$ and $0<C_2\leq C_3$ such that 
\begin{align}
\label{e:2.1} m(23B_i)\leq  C_1m(B_i)  \qquad &\hbox{for every }i\in \Lambda,\\
\label{e:2.2} C_2m(B_i)\leq m\big(B (y_i,r_i) \cap D\big)\leq C_3m(B_i)  \qquad  &\hbox{for every }i\in \Lambda.
\end{align}
For each $k\in\Z$, we define
\[
\Lambda_k=\{i\in \Lambda:\, 2^{k}\leq r_i<2^{k+1}\},
\]
and we order elements in $\Lambda_k$, so we write $\Lambda_k=\{i(k,n)\}_{n=1}^{N_k}$, where $N_k$ can be $\infty$. \smallskip 

For $k\in\Z$, we let $f_i^{(k)}=0$ for $i\in \bigcup_{l<k}\Lambda_l$, and define $ f_{i}^{(k)}\in L^2(\sX;m)$ 
for $i\in \bigcup_{l=k}^\infty \Lambda_{l}$ with the following rule. 

First, as  $\frac{C_2}{2C_1C_3}\leq \frac12$, there is some 
$f_{i(k,1)}^{(k)}\in L^2(\sX;m)$ so that
\begin{equation*} 
0\leq f_{i(k,1)}^{(k)}\leq \1_{B_D(y_{i(k,1)},r_{i(k,1)})}
\end{equation*} 
and
\begin{equation*} 
\int_\sX f_{i(k,1)}^{(k)}dm=\frac{C_2}{2C_1C_3}m(B_D(y_{i(k,1)},r_{i(k,1)})).
\end{equation*}

Next, recursively, for $(l,j)>(k,1)$, there is some  $f_{i(l,j)}^{(k)}\in L^2(\sX;m)$ so that
\begin{equation}\label{e:2.3} 
0\leq f^{(k)}_{i(l,j)}\leq \1_{B_D(y_{i(l,j)},r_{i(l,j)})}\Big(1-\sum_{(n,q)< (l,j)}f_{i(n,q)}^{(k)}\Big) 
\end{equation}
and that
\begin{equation}\label{e:2.4} 
\int_\sX f_{i(l,j)}^{(k)}dm=\frac{C_2}{2C_1C_3}m\big(B_D(y_{i(l,j)},r_{i(l,j)})\big).
\end{equation}
Indeed, for
$(n,q)<(l,j)$, we notice that if
\[
\operatorname{supp}[f^{(k)}_{i(n,q)}]\cap B_D(y_{i(l,j)},r_{i(l,j)}) \neq \emptyset,
\]
then
\[B_D(y_{i(n,q)},r_{i(n,q)})\cap B_D(y_{i(l,j)},r_{i(l,j)})\neq\emptyset,\] 
which implies
\begin{align*}
d(x_{i(n,q)},x_{i(l,j)})&\leq d(x_{i(n,q)},y_{i(n,q)})+d(y_{i(n,q)},y_{i(l,j)})+d(y_{i(l,j)},x_{i(l,j)})\\
&\leq 6r_{i(n,q)}+\big(r_{i(n,q)}+r_{i(l,j)}\big)+6r_{i(l,j)}\\
&\leq 21r_{i(l,j)}.
\end{align*}
Here we have used the facts that $r_{i(n,q)}<2r_{i(l,j)}$ and $d(x_i,y_i)\leq 6r_i$ for each $i\in \Lambda$, so
\begin{equation}\label{e:2.5}
B_{i(n,q)}\subset 23B_{i(l,j)}.
\end{equation}
Hence, 
\begin{align*}
&\sum_{(n,q)<(l,j)\atop E^{(k)}_{i(n,q)}\cap B_D(y_{i(l,j)},r_{i(l,j)}) \neq \emptyset} \int_\sX f_{i(n,q)}^{(k)}dm \\
=&\sum_{(n,q)<(l,j)\atop E^{(k)}_{i(n,q)}\cap B_D(y_{i(l,j)},r_{i(l,j)}) \neq \emptyset} \frac{C_2}{2C_1C_3}m\big(B_D(y_{i(n,q)},r_{i(n,q)})\big)\\
\leq &\sum_{(n,q)<(l,j)\atop E^{(k)}_{i(n,q)}\cap B_D(y_{i(l,j)},r_{i(l,j)}) \neq \emptyset} \frac{C_2}{2C_1}m\big(B_{i(n,q)}\big)\\
\leq &\frac{C_2}{2C_1}m(23B_{i(l,j)})\\
\leq & \frac{1}{2}m\big(B_D (y_{i(l,j)},r_{i(l,j)})  \big),
\end{align*}
where we used the fact $ \int f_{i(\eta,q)}^{(k)}dm=\frac{C_2}{2C_1C_3}m\big(B_D(y_{i(\eta,q)},r_{i(\eta,q)})\big)$ for $(n,q)<(l,j)$ in the equality,   used \eqref{e:2.2} in the second inequality, we used \eqref{e:2.5} and  the fact that $B_i,i\in \Lambda$ are pairwise disjoint in the third inequality, and we used $m(23B_i)\leq C_1m(B_i)\leq \frac{C_1}{C_2}m(B_D(y_i,r_i))$ by \eqref{e:2.1} and \eqref{e:2.2} in the last inequality. It follows immediately that 
\[ 
\int_{B_D(y_{i(l,j)},r_{i(l,j)})}\Big(1-\sum_{(n,q)< (l,j)}f_{i(n,q)}^{(k)}\Big)dm\geq \frac{1}{2}m\big(B_D(y_{i(l,j)},r_{i(l,j)})\big).
\]
As $\sum_{(n,q)< (l,j)}f_{i(n,q)}^{(k)}\leq 1$ by the construction, we can find some $f_{i(l,j)}^{(k)}$ that satisfies \eqref{e:2.3} and \eqref{e:2.4}.

In summary, for each $k\in \Z$, we get a collection $\{f^{(k)}_i:i \in \Lambda\}$  
  satisfying the following three properties:
\begin{enumerate}[(D.k.1)]
\item   $\sum_{i\in \Lambda} 
f^{(k)}_i\leq \1_D$;

\item $0\leq f^{(k)}_i\leq \1_{B_D(y_i,r_i)}  \leq \1_D$ for each $i \in \Lambda$;  

\item $\int_\sX f^{(k)}_idm= \frac{C_2}{2C_1C_3}m(B_D(y_i,r_i))$ for  each $i \in \Lambda$. 
\end{enumerate}
\smallskip 

Finally, we choose a subsequence $\{k_n; n\geq 1\}$ so that $k_n\to-\infty$ as $n\to\infty$ such that $f^{(k_n)}_i$ converges weakly in $L^2(\sX;m)$ to some $f_i$ for each $i\in \Lambda$. We next verify (D.1)--(D.4). 

First, we show (D.1).  For short, we write $\<f,g\>_{L^2(\sX;m)}=\int_\sX fgdm$ for $f,g\in L^2(\sX;m)$.  
That $0\leq f_i\leq \1_{B_D(y_i,r_i)}\leq 1$ $m$-a.e. follows from the observation 
\[
0\leq\int_E f_i(x)m(dx)=\lim\limits_{n\to\infty}\<f^{(k_n)}_i,\1_E\>_{L^2(\sX;m)}\leq m(E\cap  B_D(y_i,r_i) )\hbox{ for every bounded }
E \in \sB ( \sX) ,
\]
where we used (D.k.2) in the last inequality. Next, $\sum_{i\in \Lambda}f_i\leq\1_D$ $m$-a.e. because for every bounded $E\subset \sX$,
\begin{align*}
\int_E \sum_{i\in \Lambda} f_i (x) m (dx)=\sum_{i\in \Lambda}\int_E f_i (x) m (dx)&=\sum_{i\in \Lambda}\lim\limits_{n\to\infty}\<f_i^{(k_n)},\1_E\>_{L^2(\sX;m)}\\
&\leq \lim\limits_{n\to\infty}\sum_{i\in \Lambda}\<f_i^{(k_n)},\1_E\>_{L^2(\sX;m)}\leq m(E\cap D),
\end{align*}
where  we used (D.k.1) in the last inequality.  

(D.2) and (D.3) follow from the facts that $d(x_i,y_i)\leq 6r_i$ and $0\leq f_i\leq\1_{B_D(y_i,r_i)}$ $m$-a.e. 

Finally,  by  the weak convergence of $f_i^{(k_n)}$ to $f_i$ as $n\to \infty$ and  (D.k.3), 
\begin{align*}
\int_Df_i (x) m (dx)&=\int_{B_D(y_i,r_i)}f_i (x) m (dx) =\<\1_{B_D(y_i,r_i)},f_i\>_{L^2(\sX;m)} \\
& =\lim\limits_{n\to\infty}\<1_{B_D(y_i,r_i)},f_i^{(k_n)} \>_{L^2(\sX;m)}=\frac{C_2}{2C_1C_3}m(B_D(y_i,r_i)),
\end{align*}
(D.4) then follows from \eqref{e:2.2}. 
\end{proof}

Now, we introduce the extension operator.  

In the rest of this section, we assume $(\sX,d,m,\mcE,\mcF)$ is {\rm (VD)} and satisfies {\rm Cap}$(\phi)_{\leq}$. For each $i\in I$,   let $\eta_i\in \mcF$ be such that 
\begin{equation}\label{e:3.7a}
0\leq\eta_i\leq 1,\quad \eta_i|_{\frac52B_i}=1,\quad \eta_i|_{\sX\setminus 3B_i}=0\ \hbox{ and }\ \mcE(\eta_i,\eta_i)\leq C\frac{m(B_i)}{\phi(r_i)},
\end{equation}
for some $C>0$ depending on the constants of {\rm Cap}$(\phi)_{\leq}$, \eqref{eqnphi} and \eqref{e:vd1}. 

For $i\in I$, define $\psi_i$ by 
\begin{equation} \label{e:2.6}
\psi_i(x)=
\begin{cases} \frac{\eta_i(x)}{\sum_{j\in I}\eta_j (x)} 
\quad  &\hbox{for }x\in \sX\setminus \overline{D}, \\
0 &\hbox{for }x\in   \overline{D} .
\end{cases} 
\end{equation} 
It will be shown in Lemma \ref{L:4.2} below that $\psi_i \in \sF$ for each $i\in I$. 

We record some obvious observations in the following lemma. 

\begin{lemma}\label{L:3.4} 
 Suppose that $(\sX,d,m,\mcE,\mcF)$ is {\rm (VD)} and satisfies {\rm Cap}$(\phi)_{\leq}$. We have \\
{\rm (a)} $\sum_{i\in I}\psi_i=\1_{\sX\setminus \overline{D}}$.\quad 
{\rm (b)}  $\psi_i|_{\sX\setminus3B_i}=0$.\quad 
{\rm (c)}  $0\leq\psi_i\leq 1$.
 \end{lemma}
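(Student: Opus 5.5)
The plan is to derive all three items directly from the definition \eqref{e:2.6} together with the properties \eqref{e:3.7a} of the cut-off functions $\eta_i$ and the covering property (D.iii). The first task is to check that $\psi_i$ is well defined on $\sX\setminus\bD$, i.e. that $S(x):=\sum_{j\in I}\eta_j(x)$ is a finite number satisfying $S(x)\geq 1$ there.

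For finiteness, note that $\eta_j(x)\neq 0$ forces $x\in 3B_j$ by \eqref{e:3.7a}. By Lemma \ref{L:3.2}(d) with $\lambda=3<5$, the number of $j\in I$ with $x\in 3B_j$ is bounded by a constant $C$ depending only on the (VD) parameters. Since $0\leq\eta_j\leq 1$, this gives $S(x)\leq C<\infty$. For the lower bound, (D.iii) provides, for each $x\in\sX\setminus\bD$, some $j$ with $x\in\frac52B_j$; then $\eta_j(x)=1$, so $S(x)\geq 1$.

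With this in hand, the three items follow quickly.
\begin{itemize}
\item For (a): on $\sX\setminus\bD$ we have $\sum_i\psi_i=\sum_i\eta_i/S=1$, and on $\bD$ every $\psi_i$ vanishes by definition.
\item For (b): if $x\notin 3B_i$, then either $x\in\bD$, where $\psi_i=0$ by definition, or $\eta_i(x)=0$, which forces $\psi_i(x)=0$.
\item For (c): we have $0\leq\eta_i\leq S$ pointwise on $\sX\setminus\bD$, since $S$ is a sum of nonnegative terms containing $\eta_i$. Hence $0\leq\psi_i\leq1$.
\end{itemize}

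One technical point concerns versions. The $\eta_i$ are quasi-continuous elements of $\mcF$, so the identities $\eta_i=1$ on $\frac52B_i$ and $\eta_i=0$ off $3B_i$ hold only q.e. (or $m$-a.e.). We can fix pointwise versions satisfying these identities everywhere, for instance by redefining $\eta_i$ on the exceptional sets, which changes nothing in $L^2$ or in energy. All statements are then pointwise, or at worst $m$-a.e.

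The only step needing any real input is the local finiteness from Lemma \ref{L:3.2}(d), since without it $S$ could diverge. Everything else is bookkeeping.
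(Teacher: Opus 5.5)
Your argument is correct and is the direct verification the paper has in mind; the paper records these facts as obvious observations and gives no proof. By (D.iii), $\sum_{j\in I}\eta_j\geq 1$ off $\bD$; by Lemma~\ref{L:3.2}(d) with $\lambda=3$ the sum is finite; and (a)--(c) then follow at once from \eqref{e:2.6} and \eqref{e:3.7a}, with your remark on fixing pointwise versions of the $\eta_i$ being a harmless extra precaution.
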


\begin{lemma}\label{L:3.5}
Suppose that $(\sX,d,m,\mcE,\mcF)$ is {\rm (VD)} and satisfies {\rm Cap}$(\phi)_{\leq}$. Recall that $\Lambda:=\{ i\in I: 0<r_i<\diam(D)/2\}$. Then,   
\[
\sum_{i\in\Lambda}\psi_i(x)=1\ \hbox{ for each }x\in\sX\setminus\bD\hbox{ with }d(x,\bD)<\diam(D).
\] 
\end{lemma}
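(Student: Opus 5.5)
Since $\sum_{i\in I}\psi_i=\1_{\sX\setminus\bD}$ by Lemma~\ref{L:3.4}(a), the claim follows once we show that $\psi_i(x)=0$ for every $i\in I\setminus\Lambda$ whenever $x\in\sX\setminus\bD$ satisfies $d(x,\bD)<\diam(D)$. By Lemma~\ref{L:3.4}(b), $\psi_i$ vanishes off $3B_i$, so it suffices to check that no index $i\notin\Lambda$ can have $x\in 3B_i$.

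Suppose $x\in 3B_i$ for some $i\in I$. Lemma~\ref{L:3.2}(c) with $\lambda=3$ gives
\[
r_i\leq \tfrac{1}{5-3}\,d(x,\bD)=\tfrac12\, d(x,\bD)<\tfrac12\diam(D).
\]
Since $r_i>0$ by definition of the Whitney cover, this means $i\in\Lambda$. Therefore every $i$ with $\psi_i(x)\neq 0$ belongs to $\Lambda$, and hence
\[
\sum_{i\in\Lambda}\psi_i(x)=\sum_{i\in I}\psi_i(x)=1 .
\]

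The statement is pointwise. Lemma~\ref{L:3.4}(b) requires $\psi_i=0$ on all of $\sX\setminus 3B_i$, and this holds by the definition \eqref{e:2.6}, because $\eta_i|_{\sX\setminus 3B_i}=0$. The denominator in \eqref{e:2.6} is well-defined and at least $1$ on $\sX\setminus\bD$, because each point there lies in some $\frac52B_j$ by (D.iii), where $\eta_j=1$. The local finiteness given by Lemma~\ref{L:3.2}(d) ensures that the sum is finite. There is no real obstacle; the only care needed is the constant $\frac{1}{5-\lambda}$ with $\lambda=3$, which yields exactly the factor $\frac12$ matching the threshold $\diam(D)/2$ in the definition of $\Lambda$.
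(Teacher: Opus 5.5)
Your proposal is correct and follows the paper's proof exactly: for any $i$ with $\psi_i(x)\neq 0$, Lemma~\ref{L:3.4}(b) puts $x\in 3B_i$. Lemma~\ref{L:3.2}(c) with $\lambda=3$ then gives $r_i\le \tfrac12 d(x,\bD)<\diam(D)/2$, so $i\in\Lambda$, and Lemma~\ref{L:3.4}(a) finishes the argument.
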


\begin{proof}
We fix $x\in\sX\setminus\bD$ with  $d(x,\bD)<\diam(D)$. Then, by Lemma \ref{L:3.2}(c) and Lemma \ref{L:3.4}(b), $r_i<d(x,\bD)/2<\diam(D)/2$ for each $i\in I$ such that $\psi_i(x)\neq 0$. This implies $\sum_{i\in\Lambda}\psi_i(x)=\sum_{i\in I}\psi_i(x)=1$.
\end{proof}

\medskip

\begin{definition}\label{D:3.6}
For each $u\in L^2(\overline{D};m_0)$, define a function ${\Ex} u$ on $\sX$ by 
\[
({\Ex} u)(x)=
\begin{cases}
u(x)&\hbox{ if }x\in \overline{D},\\
\sum_{i\in \Lambda } [u]_i\psi_i(x)&\hbox{ if }x\in\sX\setminus\overline{D},
\end{cases}
\] 
where 
\begin{equation}\label{e:ui}
[u]_i=\frac{\int_D f_i(x)u(x)m(dx)}{\int_D f_i(x)m(dx)}.
\end{equation}
and $\{f_i; i\in \Lambda\}$ is the countable collection of non-negative functions in Proposition \ref{P:3.3}.
\end{definition}

\medskip

\begin{proposition}\label{P:3.7}
Suppose that {\rm (VD)} and {\rm Cap}$(\phi)_{\leq}$ hold for $(\sX, d, m, \sE, \sF)$, and that $D \subset \sX$ is Ahlfors regular.
There is $C>0$ such that   for every $u\in L^2(\overline{D};m_0)$, $x_0\in \overline{D}$  and $r>0$, 
\begin{equation} \label{e:3.8a}
\int_{ B(x_0,r)}|{\Ex} u (x) |^2 m (dx)\leq C\int_{B_D(x_0,7r)}u (x)^2 m_0(dx).
\end{equation} 
In particular, ${\Ex} $ is a bounded linear extension operator from $L^2(\overline{D};m_0)$ to $L^2(\sX;m)$ 
with
\begin{equation} \label{e:3.8b}
 \int_{ \sX}|{\Ex} u (x) |^2 m (dx)\leq C\int_{D}u (x)^2 m_0 (dx).
\end{equation} 
\end{proposition}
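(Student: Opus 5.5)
Since $m(\partial D)=0$ by Lemma \ref{L:2.5}(a), we split
\[
\int_{B(x_0,r)}|\Ex u|^2\,dm=\int_{B_D(x_0,r)}u^2\,dm_0+\int_{B(x_0,r)\setminus\bD}|\Ex u|^2\,dm .
\]
The first term is trivially bounded by the right-hand side of \eqref{e:3.8a}, so everything reduces to the exterior part.

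\emph{Pointwise bound on the exterior.} On $\sX\setminus\bD$, Lemma \ref{L:3.4} gives $0\le\psi_i\le1$ and $\sum_i\psi_i\le 1$. By Cauchy--Schwarz (or Jensen),
\[
|\Ex u(x)|^2\le\sum_{i\in\Lambda}[u]_i^2\psi_i(x).
\]
For each $i$, apply Jensen with the probability weight $f_i/\int_D f_i\,dm$ and use (D.4):
\[
[u]_i^2\le \frac{\int_D f_iu^2\,dm}{\int_D f_i\,dm}\le C\,\frac{\int_D f_iu^2\,dm}{m(B_i)}.
\]
Since $\psi_i$ is supported in $3B_i$ (Lemma \ref{L:3.4}(b)),
\[
\int_{B(x_0,r)\setminus\bD}|\Ex u|^2\,dm\le\sum_{i\in\Lambda,\ 3B_i\cap B(x_0,r)\ne\emptyset} m(3B_i)\,[u]_i^2\le C\sum_{i\in\Lambda,\ 3B_i\cap B(x_0,r)\ne\emptyset}\int_D f_iu^2\,dm .
\]
Here (VD) gives $m(3B_i)\le Cm(B_i)$.

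\emph{Localization.} Take $i$ with $3B_i\cap B(x_0,r)\neq\emptyset$ and pick $z$ in the intersection. By Lemma \ref{L:3.2}(c) with $\lambda=3$, and since $x_0\in\bD$,
\[
2r_i\le d(z,\bD)\le d(z,x_0)<r,
\]
so $r_i<r/2$. By (D.2) and (D.3), every point of $\operatorname{supp}[f_i]$ lies within distance $7r_i+2r_i=9r_i$ of $x_i$. Also $d(x_i,x_0)<3r_i+r$. Hence $\operatorname{supp}[f_i]\subset B(x_0,12r_i+r)\subset B(x_0,7r)$. Using (D.1), $\sum_i f_i\le \1_D$, we sum to get
\[
\sum_{i}\int_D f_iu^2\,dm=\int_D\Big(\sum_i f_i\Big)u^2\,dm\le\int_{B_D(x_0,7r)}u^2\,dm_0 ,
\]
where the sum runs over the relevant $i$. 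This proves \eqref{e:3.8a}.

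The global bound \eqref{e:3.8b} follows from the same computation without localization: sum over all $i\in\Lambda$ and use (D.1). Alternatively, let $r\to\infty$ in \eqref{e:3.8a} by monotone convergence. Linearity of $\Ex$ and the fact that it extends $u$ are immediate from Definition \ref{D:3.6}.

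The main point to check carefully is the support geometry. The proof relies on (D.3) tying $\operatorname{supp}[f_i]$ to within $O(r_i)$ of $x_i$, and on Lemma \ref{L:3.2}(c) bounding $r_i$ by the distance to $\bD$. Together these keep all contributing $f_i$ inside $B(x_0,7r)$, and the constant $7$ comes out of exactly this calculation. The interchange of sums and integrals is justified by nonnegativity.
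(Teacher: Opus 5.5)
Your proposal is correct and follows the paper's proof essentially step for step. It uses Cauchy--Schwarz against the partition of unity $\{\psi_i\}$, Jensen with weights $f_i$ combined with (D.4) and (VD), localization of $\operatorname{supp}[f_i]$ inside $B(x_0,7r)$ via Lemma \ref{L:3.2}(c) and (D.2)--(D.3), and finally (D.1). Your explicit use of $m(\partial D)=0$ from Lemma \ref{L:2.5}(a) to split off the trivial interior part $\int_{B_D(x_0,r)}u^2\,dm_0$ is a detail the paper leaves implicit, and it is handled correctly.
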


\begin{proof}
Fix $x_0\in \overline{D}$, $r>0$ and $u\in L^2(\overline{D};m_0)$. For $i\in \Lambda$ such that $3B_i\cap B(x_0,r)\neq\emptyset$, we pick $x\in B(x_0,r)\cap 3B_i$. We have $r_i\leq \frac{1}{2}d(x,\overline{D})\leq \frac12d(x,x_0)$ by Lemma \ref{L:3.2} (c). Then, by Proposition \ref{P:3.3} (D.2) and (D.3), we see
\begin{align*}
	d(x_0,\operatorname{supp}[f_i])&\leq d(x_0,x)+d(x,x_i)+d(x_i,\operatorname{supp}[f_i])\\
	&=d(x_0,x)+3r_i+7r_i\leq 6d(x,x_0)\leq 6r,\\
	\diam(\operatorname{supp}[f_i])&\leq 2r_i\leq d(x,x_0)\leq r.
\end{align*}
Hence, 
\begin{equation}\label{e:2.7}
\{i\in \Lambda:\,3B_i\cap B(x_0,r)\neq\emptyset\}\subset \{i\in\Lambda:\, \operatorname{supp}[f_i]\subset B(x_0,7r)\}.
\end{equation}
Hence,  
\begin{align*}
\int_{B(x_0,r)\setminus \overline{D}}{\Ex u} (x)^2 m (dx)&=\int_{B(x,r)\setminus \overline{D}}\big(\sum_{i\in \Lambda}[u]_i\psi_i(x)\big)^2m(dx)\\
&\leq \int_{B(x,r)\setminus \overline{D}}\sum_{i\in\Lambda}\psi_i(x)[u]_i^2m(dx)\\
&=\sum_{i\in \Lambda:\,3B_i\cap B(x_0,r)\neq\emptyset}[u]_i^2\int_{3B_i\cap B(x_0,r)}\psi_i(x)m(dx)\\
&\leq\sum_{i\in\Lambda:\, \operatorname{supp}[f_i]\subset B(x_0,7r)}[u]^2_im(3B_i)\\
&\leq C_1\sum_{i\in\Lambda:\, \operatorname{supp}[f_i]\subset B(x_0,7r)}[u]^2_i \int_{D}f_i (x) m (dx) \\
&\leq C_1\sum_{i\in \Lambda:\, \operatorname{supp}[f_i]\subset B(x_0,7r)}\frac{\int_D u(x)^2f_i (x) m (dx)}{\int_D f_i (x) m (dx)}\int_D f_i (x) m (dx) \\
& =C_1\sum_{i\in \Lambda:\, \operatorname{supp}[f_i]\subset B(x_0,7r)}\int_D u (x)^2f_i (x) m (dx)\\
&\leq C_1\int_{B_D(x_0,7r)}u (x)^2 m (dx),
\end{align*}
where we used Cauchy–Schwarz inequality and parts (a) and (c) Lemma \ref{L:3.4} in the first inequality,
 Lemma \ref{L:3.4}(b) in the second equality, 
 \eqref{e:2.7} and parts (b) and (c) of Lemma \ref{L:3.4}in the second inequality,
   \eqref{e:vd} and Proposition \ref{P:3.3}(D.4)  in the third inequality, 
Jensen's inequality in the fourth inequality,
 and  Proposition \ref{P:3.3}(D.1) in the last inequality.  
This establishes \eqref{e:3.8a}. 
Sending $r\to \infty$ in \eqref{e:3.8a} yields the inequality \eqref{e:3.8b}.
\end{proof}

\section{Energy estimates} \label{S:4}

In this section, we establish the following energy estimate
 for the extension operator ${\Ex}$ defined in Definition \ref{D:3.6}.  
For a Dirichlet form $(\sE, \sF)$ on $L^2(\sX; m)$ and a constant $\alpha >0$, we define 
$\sE_{\alpha} (u, v):= \sE(u, v) + \alpha \int_{\sX} u(x) v(x) m(dx)$ for $u, v\in \sF$.
Similar notation applies to the active reflected Dirichlet form $(\bar \sE, \bar \sF)$ of \eqref{e:2.8}-\eqref{e:2.9}
on $L^2(\bar D; m_0)$; that is,
$\bar \sE_{\alpha} (u, v):= \bar \sE(u, v) + \alpha \int_{\overline D} u(x) v(x) m_0(dx)$ for $u, v\in \bar \sF$.

\begin{proposition}\label{P:4.1}
Suppose that $(\sX,d,m,\sE,\sF)$ satisfies {\rm (VD)}, {\rm Cap}$(\phi)_{\leq}$ and ${\bf J}_{\phi}$,
 and  $D\subset \sX$ is Ahlfors regular. 
Then $\Ex u\in\sF$ for each $u\in{\bar \sF}$. Moreover, there is a constant $C>0$ 
depending on the parameters of {\rm (VD)}, {\rm Cap}$(\phi)_{\leq}$, ${\bf J}_{\phi}$ and \eqref{eqnphi} such that  
\begin{equation} \label{e:4.1a}
\sE_1( \Ex u, \Ex u)   \leq  C \bar \sE_1 (u, u) \quad \hbox{for every } u \in \bar \sF,
\end{equation}
and
\begin{equation} \label{e:4.1b}
\int_{B(x_0,r)\times B(x_0,r)}\big({\Ex u}(x)-{\Ex u}(y)\big)^2J(dx,dy)\leq  C\int_{B_D(x_0,7r)\times B_D(x_0,14r)}\big(u(x)-u(y)\big)^2J(dx,dy)
\end{equation}
for every $u\in\bar\sF$, $x_0\in\bD$ and $0<r<\diam(D)$. 
\end{proposition}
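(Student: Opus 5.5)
The plan is to prove the local estimate \eqref{e:4.1b} first, and then obtain \eqref{e:4.1a} from it together with Proposition \ref{P:3.7}. Write $E:=B(x_0,r)$ and $v:=\Ex u$. Split $E\times E$ into three regions: $(E\cap\bD)\times(E\cap\bD)$, the cross region $(E\cap\bD)\times(E\setminus\bD)$ together with its symmetric copy, and the exterior region $(E\setminus\bD)\times(E\setminus\bD)$. By Lemma \ref{L:2.5}(a) we have $m(\partial D)=0$, so the first region is just $\int_{B_D(x_0,r)^2}(u(x)-u(y))^2J$. The exterior region will be split further into a near-diagonal part, where $d(x,y)<\frac14 (d(x,\bD)\vee d(y,\bD))$, and an off-diagonal part.

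The central tool is a local Poincaré-type bound for the averages $[u]_i$ from \eqref{e:ui}. Take $i,j\in\Lambda$ and a point $z$ with $\operatorname{supp}[f_i]\cup\operatorname{supp}[f_j]\subset B(z,\rho)$. Jensen's inequality with respect to the probability measures $f_i\,dm/\!\int f_i$ and $f_j\,dm/\!\int f_j$ gives
\[
([u]_i-[u]_j)^2\le \frac{1}{\int_D f_i\,dm\int_D f_j\,dm}\int_{B_D(z,\rho)}\int_{B_D(z,\rho)}(u(x)-u(y))^2 m(dx)m(dy).
\]
Combining Proposition \ref{P:3.3}(D.4), (VD) and ${\bf J}_\phi$, which gives $J(x,y)\gtrsim 1/(V(z,\rho)\phi(\rho))$ on $B(z,\rho)^2$, we get
\[
([u]_i-[u]_j)^2\lesssim \frac{\phi(\rho)}{m(B_i)}\int_{B_D(z,\rho)^2}(u(x)-u(y))^2J(dx,dy)
\]
whenever $r_i\asymp r_j\asymp\rho$. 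The analogous estimate for $[u]_i$ against $u(x)$ with $x\in D$ is
\[
\int (u(x)-[u]_i)^2\,dm(x)\lesssim \phi(\rho)\int\!\!\int (u(x)-u(y))^2J.
\]
These are the only inputs on $u$ that we need.

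Near-diagonal exterior part. Fix a Whitney ball $B_k$ and points $x,y\in \frac52B_k$. By Lemma \ref{L:3.4}(a) and Lemma \ref{L:3.5}, near $D$ we have $\sum_i\psi_i=1$, so
\[
v(x)-v(y)=\sum_{i}([u]_i-[u]_k)(\psi_i(x)-\psi_i(y)),
\]
and only boundedly many $i$ occur, by Lemma \ref{L:3.2}(b). Cauchy--Schwarz then bounds the contribution of this part by $\sum_i([u]_i-[u]_k)^2\int\!\!\int(\psi_i(x)-\psi_i(y))^2J$. This requires $\psi_i\in\sF$ with $\sE(\psi_i,\psi_i)\lesssim m(B_i)/\phi(r_i)$. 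We obtain that from \eqref{e:3.7a} and the quotient structure: the denominator $\sum_j\eta_j$ is $\ge1$ and locally a finite sum, and Markov-contraction arguments apply to products and quotients of bounded functions. The factor $m(B_i)/\phi(r_i)$ cancels the $\phi(\rho)/m(B_i)$ in the Poincaré bound. Summing over $k$ uses bounded overlap (Lemma \ref{L:3.2}(d)) and the containments established in \eqref{e:2.7}, which place the relevant supports inside $B_D(x_0,7r)$.

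Off-diagonal and cross parts. For these we drop cancellation and bound $(v(x)-v(y))^2\le 2(v(x)-a)^2+2(v(y)-a)^2$ for a suitable local average $a$. We then integrate the jump kernel over the far region, where $d(x,y)\gtrsim d(x,\bD)$: using ${\bf J}_\phi$ and (VD),
\[
\int_{d(x,y)>s}J(x,y)\,m(dy)\lesssim 1/\phi(s).
\]
Each point $x\in 3B_k$ is thus charged $\phi(r_k)^{-1}\,(v(x)-[u]_k)^2$. Writing $v(x)-[u]_k$ as a convex combination of the differences $[u]_i-[u]_k$ reduces this to the Poincaré bounds again. The remaining difficulty is the chaining between Whitney balls at different scales. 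For the cross term $x\in D$, $y\in 3B_k$ we compare $u(x)$ with $[u]_k$, using the fact that $\operatorname{supp}[f_k]$ lies within $7r_k$ of $x_k$. We split according to whether $d(x,y)\gtrsim r_k$; if not, $x$ lies within $O(r_k)$ of $\operatorname{supp}[f_k]$ and the Poincaré bound at scale $r_k$ applies. Summing over dyadic scales is where the second ball $B_D(x_0,14r)$ enters. I expect this bookkeeping, keeping the sums of the form $\sum_k \phi(r_k)^{-1}\phi(\rho_k)\int\!\!\int_{B(z_k,\rho_k)^2}$ with bounded overlap, to be the main obstacle. I would handle it by grouping $k$ by dyadic $r_k$, using the geometric decay in \eqref{eqnphi}, and applying Lemma \ref{L:2.5}(b) to compare $m_0$-balls.

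Finally, to get \eqref{e:4.1a}, let $r\to\infty$ when $D$ is unbounded. Otherwise cover by balls of radius comparable to $\diam(D)$ and treat the Whitney balls with $i\notin\Lambda$ directly: there $v$ is $L^2$-controlled by Proposition \ref{P:3.7}, and $\phi(r_i)\gtrsim\phi(\diam D)$, so these terms are absorbed into $\int u^2\,dm_0$. This gives $\Ex u\in\sF$.
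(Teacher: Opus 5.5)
There is a genuine gap, and it sits in the ``bookkeeping'' you postpone. Your Poincar\'e bound discards the one property of Proposition \ref{P:3.3} that makes all the sums close. You replace the weights $f_i(x)f_j(y)$ by $\1_{B_D(z,\rho)}(x)\1_{B_D(z,\rho)}(y)$ and convert $m\otimes m$ into $J$ at scale $\rho$. After the cancellation of $m(B_i)/\phi(r_i)$, each pair of neighbouring Whitney balls then contributes the full quantity $\int_{B_D(z_k,\rho_k)^2}(u(x)-u(y))^2J(dx,dy)$ with weight one.

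Lemma \ref{L:3.2}(d) controls overlaps only at points of $\sX\setminus\bD$. On $D$ the balls $B(z_k,\rho_k)$ with $\rho_k\asymp r_k$ overlap across all scales. For example, let $D$ be a half-space in $\R^n$ and take $z_k=x_k$, $\rho_k=Cr_k$. A pair $x,y\in D$ at height $\delta$ with $d(x,y)<\delta$ then lies in such a ball at every dyadic scale between $\delta$ and $r$. So your near-diagonal estimate carries a factor of order $\log(r/\delta)$, which is unbounded.

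The remedy is to keep the weights that Jensen produces. For neighbouring $i,j$,
\[
([u]_i-[u]_j)^2\,\frac{m(B_j)}{\phi(r_j)}\le \frac{m(B_j)}{\phi(r_j)}\,\frac{\int_{D\times D}f_i(x)f_j(y)(u(x)-u(y))^2\,m(dx)m(dy)}{\int_D f_i\,dm\int_D f_j\,dm}\lesssim\int_{D\times D}f_i(x)f_j(y)\big(u(x)-u(y)\big)^2J(dx,dy),
\]
using (D.4) and $d(x,y)\lesssim r_i+r_j$ on $\operatorname{supp}[f_i]\times\operatorname{supp}[f_j]$. Summing over \emph{all} pairs then costs nothing, since $\sum_{i,j}f_i(x)f_j(y)\le 1$ by (D.1). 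Alternatively, you could keep the $m\otimes m$ form with weight $1/(V(z,\rho)\phi(\rho))$ and sum a geometric series in $\rho$; that would also repair your version, but it is not what you wrote.

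For the off-diagonal and cross parts you leave ``chaining between Whitney balls at different scales'' unresolved. Splitting with an $a$ local to $x$ also leaves a term $(v(y)-a)^2$ with $y$ far away. No chaining is needed, because the kernel is nonlocal and can be compared directly.

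For $x\in 3B_i$ and $y\in 3B_j\setminus 4B_i$ one has $d(x,y)\ge\frac19\max\{r_i,r_j\}$ and $d(x_i,x_j)\le 55\,d(x,y)$. Also $d(z,w)\le d(x_i,x_j)+9(r_i+r_j)$ for $z\in\operatorname{supp}[f_i]$ and $w\in\operatorname{supp}[f_j]$. So by ${\bf J}_\phi$ and (VD), $J$ on $3B_i\times(3B_j\setminus4B_i)$ is dominated by a multiple of $J$ on $\operatorname{supp}[f_i]\times\operatorname{supp}[f_j]$. Write $\Ex u(x)-\Ex u(y)=\sum_{i,j\in\Lambda}\psi_i(x)\psi_j(y)([u]_i-[u]_j)$ (Lemma \ref{L:3.5}), and apply Jensen first in $\psi_i\otimes\psi_j$ and then in $f_i\otimes f_j$. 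This gives
\[
([u]_i-[u]_j)^2\int_{3B_i\times(3B_j\setminus 4B_i)}\psi_i(x)\psi_j(y)J(dx,dy)\lesssim\int_{D\times D}f_i(z)f_j(w)\big(u(z)-u(w)\big)^2J(dz,dw),
\]
and (D.1) again handles the sum.

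The cross term is the one-sided version. For $x\in 3B_i$, $y\in D$ and $z\in\operatorname{supp}[f_i]$ one has $d(x,y)\ge\frac25 d(x_i,y)\ge\frac17 d(z,y)$. Hence $\int_{3B_i}([u]_i-u(y))^2J(x,y)\,m(dx)\lesssim\int_D f_i(z)(u(z)-u(y))^2J(z,y)\,m(dz)$.

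Finally, the second ball $B_D(x_0,14r)$ in \eqref{e:4.1b} does not come from a sum over dyadic scales. It comes from the containment of $\operatorname{supp}[f_i]$ for those $i$ with $4B_i\cap B(x_0,r)\neq\emptyset$.
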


We split the energy of the left hand side into three parts, roughly speaking, near diagonal portion in $\sX\setminus \overline{D}$, off diagonal portion in $\sX\setminus \overline{D}$ and the cross energy portion in $(\sX\setminus \overline{D})\times D$. For simplicity, we use the following notation.

\medskip

\noindent\textbf{Notation}. For each Borel set $\mathcal{O}\subset \sX\times\sX$ and a Borel measurable function $f$ defined $m$-a.e. on $\mathcal{O}$,  we define
\[
\mcE^{(\mathcal{O})}(f,f):=\int_{\mathcal{O}}\big(f(x)-f(y)\big)^2J(dx,dy).
\]

\subsection{Near diagonal energy in $\sX\setminus \overline{D}$} 
Recall  from \eqref{e:3.1a} that $\{B_i:=B(x_i,r_i):x_i\in \sX\setminus \overline{D},r_i>0,i\in I\}$ is the  $\frac14$-Whitney cover cover of $\sX\setminus \overline{D}$. In this subsection, we estimate $\mcE^{(\mathcal{O}_d)}$ locally, where 
\[
\mathcal{O}_d :=\bigcup_{i\in I}4B_i\times 4B_i. 
\]

\begin{lemma}\label{L:4.2}
Suppose that $(\sX,d,m,\sE,\sF)$ satisfies {\rm (VD)} and {\rm Cap}$(\phi)_{\leq}$. Let $\psi_i$ be the function defined by \eqref{e:2.6}.
Then, there is a positive constant $C$ depending on the parameters in {\rm (VD)} and {\rm Cap}$(\phi)_{\leq}$ such that 
$$
\psi_i\in\mcF\ \hbox{ and }\ \mcE(\psi_i,\psi_i)\leq C\frac{m(B_i)}{\phi(r_i)}\quad\hbox{ for }i\in I.
$$
\end{lemma}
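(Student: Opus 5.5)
The plan is to write $\psi_i=\eta_i\cdot g$ with $g:=1/\sum_{j\in I}\eta_j$ on $\sX\setminus\bD$, and to control the jump energy of $\psi_i$ directly from the pointwise structure. Set $S:=\sum_{j\in I}\eta_j$. By (D.iii), $S\ge1$ on $\sX\setminus\bD$. By Lemma \ref{L:3.2}(d) with $\lambda=3$, $S\le N$ for a constant $N$ depending only on (VD). Only indices $j\in I_i:=\{j:3B_j\cap 3B_i\neq\emptyset\}$ contribute to $S$ on $3B_i$, and $\#I_i\le C$ by Lemma \ref{L:3.2}(b). For these $j$, Lemma \ref{L:3.2}(a) gives $r_j\asymp r_i$, and then (VD) and \eqref{eqnphi} give $m(B_j)\asymp m(B_i)$ and $\phi(r_j)\asymp\phi(r_i)$.

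The key step is a pointwise difference estimate. For $x,y\in\sX$ I claim
\[
|\psi_i(x)-\psi_i(y)|\le C\Big(|\eta_i(x)-\eta_i(y)|+\sum_{j\in I_i}|\eta_j(x)-\eta_j(y)|\Big)
\]
when at least one of $x,y$ lies in $3B_i$; if neither does, both sides vanish.

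Consider first the case $x,y\in\sX\setminus\bD$. Write
\[
\psi_i(x)-\psi_i(y)=\frac{\eta_i(x)-\eta_i(y)}{S(x)}+\eta_i(y)\,\frac{S(y)-S(x)}{S(x)S(y)}.
\]
Use $S\ge1$. In the second term $\eta_i(y)\ne0$ forces $y\in3B_i$, so $S(y)-S(x)$ only involves $j$ with $\eta_j(x)\ne0$ or $\eta_j(y)\ne0$. Symmetrizing (swapping the roles of $x,y$ if $\eta_i(y)=0$) reduces the sum to $j\in I_i$, up to indices with $x\in 3B_j$.

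I expect this bookkeeping to be the main obstacle. When $x\notin 3B_i$ and $y\in 3B_i$, the indices $j$ with $x\in3B_j$ need not meet $3B_i$. I handle this by bounding $|\psi_i(x)-\psi_i(y)|=\psi_i(y)\le\eta_i(y)=|\eta_i(x)-\eta_i(y)|$ directly, which gives the claim in that subcase.

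Now the case where one point, say $x$, lies in $\bD$ and $y\in3B_i$. Here $\psi_i(x)=0$ and $\eta_i(x)=0$, since $3B_i\cap\bD=\emptyset$ by (D.ii). So again $|\psi_i(x)-\psi_i(y)|\le|\eta_i(x)-\eta_i(y)|$.

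Given the claim, membership and the bound follow in two steps.
\begin{enumerate}[(1)]
\item Squaring, integrating against $J(dx,dy)$, and using Cauchy--Schwarz over the boundedly many $j\in I_i$ yields
\[
\int(\psi_i(x)-\psi_i(y))^2J(dx,dy)\le C\sum_{j\in I_i\cup\{i\}}\mcE(\eta_j,\eta_j)\le C\frac{m(B_i)}{\phi(r_i)},
\]
using \eqref{e:3.7a} and the comparabilities above.
\item Since $0\le\psi_i\le\1_{3B_i}$, we have $\psi_i\in L^2$. Because $(\sE,\sF)$ is of pure jump type with the explicit domain \eqref{jumpDirichlet}, finite energy plus $L^2$ gives $\psi_i\in\sF$ directly. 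No quasi-continuity issue arises, since $\sF$ is described by the integral condition and $\psi_i$ is measurable.
\end{enumerate}
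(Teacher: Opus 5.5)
Your proof is correct and is essentially the paper's argument. The paper writes $\psi_i=\eta_i/(1\vee g_i)$ on all of $\sX$, with $g_i:=\sum_{j:\,3B_j\cap 3B_i\neq\emptyset}\eta_j$, so the quotient splitting $\frac{a}{A}-\frac{b}{B}=\frac{a-b}{A}+b\big(\frac1A-\frac1B\big)$ yields your pointwise bound for every pair at once and your case analysis becomes unnecessary; one small slip is that when neither point lies in $3B_i$ only the left side of your claim vanishes, which is all you need.
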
 

\begin{proof}
For each $f\in I$, define $g_i :=\sum_{j\in I\atop 3B_i\cap 3B_j\neq\emptyset}\eta_j\in C(\sX)$, 
where $\{\eta_j; j\in I\}$ are defined in  \eqref{e:3.7a}. 
Then, 
\begin{equation}\label{e:3.1}
	\begin{split}
		&\quad\ \int_{\sX \times  \sX}(1\vee g_i(x)-1\vee g_i(y))^2J(dx,dy)\\
		&\leq \int_{\sX \times \sX} (g_i(x)-g_i(y))^2J(dx,dy)=\mcE(g_i,g_i)\\
		&\leq \#\{j\in I:\, 3B_i\cap 3B_j\neq\emptyset\}\sum_{j\in I: 3B_i\cap 3B_j\neq\emptyset}\mcE(\eta_j,\eta_j)\leq C_1\frac{m(B_i)}{\phi(r_i)},
	\end{split}
\end{equation}
where in the last inequality, we used
 parts (a) and (b) of Lemma \ref{L:3.2}, \eqref{eqnphi}, \eqref{e:vd1} and the energy estimate of $\eta_j,j\in I$. Hence, 
\begin{align*}
	&\quad\ \int_{\sX \times \sX}(\psi_i(x)-\psi_i(y))^2J(dx,dy) \\
	&=\int_{\sX \times \sX } \Big(\frac{\eta_i(x)}{1\vee g_i(x)}-\frac{\eta_i(y)}{1\vee g_i(y)} \Big)^2J(dx,dy)\\
	&\leq2\int_{\sX \times \sX }(\eta_i(x)-\eta_i(y))^2J(dx,dy)+2\int_{\sX \times \sX}(\frac{1}{1\vee g_i(x)}-\frac{1}{1\vee g_i(y)})^2J(dx,dy)\\
	&\leq2\int_{\sX \times \sX}(\eta_i(x)-\eta_i(y))^2J(dx,dy)+2\int_{\sX \times \sX}(1\vee g_i(x)-1\vee g_i(y))^2J(dx,dy)\\
	&\leq C_2\frac{m(B_i)}{\phi(r_i)},
\end{align*}
where the equality follows from the definition of $g_i$ and $\psi_i$, the first inequality uses the fact that $0\leq\eta_i\leq 1$ and $1\vee g_i\geq 1$, the second inequality follows from the fact $1\vee g_i\geq 1$, and the last inequality follows from \eqref{e:3.1} and the energy estimate of $\eta_i$.  Finally, noting that $\psi_i\in L^2(\sX;m)$, we conclude $\psi_i\in\sF$.
\end{proof}

\begin{proposition}\label{P:4.3}
Suppose that  {\rm (VD)}, {\rm Cap}$(\phi)_{\leq}$ and ${\bf J}_{\phi,\geq}$  hold for $(\sX, d, m, \sE, \sF)$, and
 that $D\subset \sX$ is Ahlfors regular.
Then there is a constant $C>0$ such that
\[
\mcE^{\big(\mathcal{O}_d\cap(B(x_0,r)\times B(x_0,r))\big)}(\Ex u,\Ex u)\leq  C\int_{B_D(x_0,7r)\times  B_D(x_0,14r)}\big(u(x)-u(y)\big)^2J(dx,dy),
\]
for every $u\in\mcF$, $x_0\in \overline{D}$ and $r<\diam(D)$. 
\end{proposition}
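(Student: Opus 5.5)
Since $\mathcal{O}_d\cap(B(x_0,r)\times B(x_0,r))\subset\bigcup_{i\in I_0}(4B_i\times 4B_i)$ with $I_0:=\{i\in I:4B_i\cap B(x_0,r)\neq\emptyset\}$, it suffices to bound $\mcE^{(4B_i\times 4B_i)}(\Ex u,\Ex u)$ for each $i\in I_0$ and then sum. Fix such $i$. First, as in the proof of Proposition \ref{P:3.7} via Lemma \ref{L:3.2}(c), $r_i\lesssim d(x,\bD)\le d(x,x_0)< r<\diam(D)$ for some $x\in 4B_i\cap B(x_0,r)$; since points of $4B_i$ lie at distance comparable to $r_i$ (and less than $\diam(D)$ after adjusting constants) from $\bD$, Lemma \ref{L:3.5} shows that on $4B_i$ we have $\sum_{j\in\Lambda}\psi_j=1$. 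Let $N(i):=\{j\in\Lambda: 3B_j\cap 4B_i\neq\emptyset\}$, a set of uniformly bounded cardinality with $r_j\asymp r_i$ by Lemma \ref{L:3.2}(a),(b). Then on $4B_i$,
\[
\Ex u(x)-[u]_i=\sum_{j\in N(i)}([u]_j-[u]_i)\psi_j(x),
\]
so that
\[
\big(\Ex u(x)-\Ex u(y)\big)^2\le \#N(i)\sum_{j\in N(i)}([u]_j-[u]_i)^2(\psi_j(x)-\psi_j(y))^2 .
\]
Lemma \ref{L:4.2} (with \eqref{e:vd1}, \eqref{eqnphi}) then gives
\[
\mcE^{(4B_i\times4B_i)}(\Ex u,\Ex u)\le C\frac{m(B_i)}{\phi(r_i)}\sum_{j\in N(i)}([u]_j-[u]_i)^2 .
\]

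Next, I will bound $([u]_j-[u]_i)^2$ by a local energy of $u$. By Jensen's inequality applied to the probability measures $f_i\,dm/\!\int f_i$ and $f_j\,dm/\!\int f_j$,
\[
([u]_j-[u]_i)^2\le \frac{\int_D\int_D(u(x)-u(y))^2f_i(x)f_j(y)\,m(dx)m(dy)}{\int_D f_i\,dm\int_D f_j\,dm}.
\]
By (D.2), (D.3), and $r_j\asymp r_i$, the supports of $f_i,f_j$ lie in a ball $B(x_i,c r_i)$ and satisfy $d(x,y)\le c' r_i$ for $x\in{\rm supp}[f_i]$, $y\in{\rm supp}[f_j]$. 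Hence ${\bf J}_{\phi,\ge}$ and (VD) give $J(x,y)\gtrsim 1/(m(B_i)\phi(r_i))$ there, while (D.4) gives $\int f_i\,dm\asymp m(B_i)\asymp\int f_j\,dm$. Combining these yields
\[
\frac{m(B_i)}{\phi(r_i)}([u]_j-[u]_i)^2\le C\int_{S_i\times S_j}(u(x)-u(y))^2 f_i(x)f_j(y)\,J(dx,dy),
\]
where $S_k:={\rm supp}[f_k]\cap D$.

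The main obstacle is summing over $i\in I_0$ without losing control of the overlap, and landing in the region $B_D(x_0,7r)\times B_D(x_0,14r)$. I plan to bound $f_i f_j\le f_i\1_D$ and then use that for each fixed $(i,j)$ pair only boundedly many indices $i'$ produce the same pair (bounded overlap from Lemma \ref{L:3.2}(b)). The resulting sum is at most
\[
C\sum_{i\in I_0}\sum_{j\in N(i)}\int (u(x)-u(y))^2 f_i(x)f_j(y)\,J(dx,dy)\le C\int\!\!\int (u(x)-u(y))^2\sum_i f_i(x)\sum_j f_j(y)\,J(dx,dy)
\]
times the overlap constant. Using $\sum f_i\le\1_D$ from (D.1), this is bounded by $C\int_{D\times D}(u(x)-u(y))^2\,J(dx,dy)$ restricted to the supports. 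For the localization, the argument of \eqref{e:2.7} shows ${\rm supp}[f_i]\subset B(x_0,7r)$ for $i\in I_0$ (after tracking constants with $4B_i$ in place of $3B_i$, possibly enlarging radii slightly). For $j\in N(i)$, $r_j\le c r_i\lesssim r$, so ${\rm supp}[f_j]\subset B(x_0,14r)$. If the constants do not come out exactly as $7$ and $14$, I would refine the estimate $r_i\le \frac{1}{1}d(x,\bD)$-type bounds from Lemma \ref{L:3.2}(c) for $\lambda=4$, which give $r_i\le d(x,\bD)$, and recheck the chain.
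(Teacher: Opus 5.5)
\textbf{Verdict: there is a genuine gap.} Your overall scheme matches the paper's proof, with one exception, and that exception breaks the argument. The shared steps are: write $\Ex u-[u]_i=\sum_j([u]_j-[u]_i)\psi_j$ on a Whitney block; bound each term by $\mcE(\psi_j,\psi_j)\lesssim m(B_j)/\phi(r_j)$ via Lemma \ref{L:4.2}; control $([u]_i-[u]_j)^2$ by Jensen against $f_i\,dm\otimes f_j\,dm$ together with ${\bf J}_{\phi,\geq}$ and (D.4); and sum using (D.1).

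\textbf{Where it fails.} You discard the restriction $(x,y)\in B(x_0,r)\times B(x_0,r)$ and estimate the whole block $\mcE^{(4B_i\times 4B_i)}(\Ex u,\Ex u)$.
\begin{itemize}
\item Lemma \ref{L:3.5} gives $\sum_{j\in\Lambda}\psi_j=1$ only at points $x$ with $d(x,\bD)<\diam(D)$.
\item Points of $4B_i$ lie at distance up to $9r_i$ from $\bD$, and for $i\in I_0$ you only know $r_i<r<\diam(D)$.
\item When $r$ is comparable to $\diam(D)$, no ``adjustment of constants'' is available, because the range $r<\diam(D)$ is fixed in the statement.
\end{itemize}
In fact your block inequality is false in general. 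Take $D$ bounded and $u\equiv 1$. Then every $[u]_j=1$, so your right-hand side vanishes. But $\Ex u=\sum_{j\in\Lambda}\psi_j$ satisfies $\Ex u(x_j)\le 1-\psi_j(x_j)<1$ for every $j\in I\setminus\Lambda$. So $\Ex u$ is not constant once one moves a distance of order $\diam(D)$ away from $\bD$. For $r$ close to $\diam(D)$, nothing prevents the blocks $4B_i$, $i\in I_0$, from meeting that region, where $\mcE^{(4B_i\times 4B_i)}(\Ex 1,\Ex 1)>0$.

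\textbf{The fix.} Keep both variables in $B(x_0,r)$, as the paper does.
\begin{itemize}
\item Every point of $B(x_0,r)\setminus\bD$ is within distance $r<\diam(D)$ of $\bD$, so Lemma \ref{L:3.5} applies there.
\item Only indices $j$ with $3B_j\cap 4B_i\cap B(x_0,r)\neq\emptyset$ then contribute.
\end{itemize}
This restriction is also what produces the stated radii, and your localization paragraph gets them wrong.
\begin{itemize}
\item For $i$ with $4B_i\cap B(x_0,r)\neq\emptyset$, Lemma \ref{L:3.2}(c) with $\lambda=4$ gives $r_i\le d(x,\bD)\le d(x,x_0)<r$. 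Hence ${\rm supp}[f_i]\subset B(x_0,14r)$; this is \eqref{14r}, not $7r$.
\item For the neighbours $j$, you need $3B_j\cap B(x_0,r)\neq\emptyset$ to invoke \eqref{e:2.7} and get ${\rm supp}[f_j]\subset B(x_0,7r)$.
\item Knowing only $3B_j\cap 4B_i\neq\emptyset$ and $r_j\lesssim r_i$ places ${\rm supp}[f_j]$ merely in some $B(x_0,Cr)$ with $C$ far larger than $14$.
\end{itemize}
Once the restriction is restored, your Jensen and summation steps go through as written.

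The overlap counting you propose is also unnecessary. Each ordered pair $(i,j)$ occurs at most once in the double sum, so $\sum_i\sum_j f_i(x)f_j(y)\le \1_D(x)\1_D(y)$ follows directly from (D.1).
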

\begin{proof}
Fix $x_0\in \overline{D}$ and $u\in L^2(\overline{D};m_0)$. For each $i\in \Lambda$ such that $B(x_0,r)\cap 4B_i\neq\emptyset$, we pick $x\in B(x_0,r)\cap 4B_i$. We have $r_i\leq d(x,\overline{D})\leq d(x,x_0)$ by Lemma \ref{L:3.2} (c). Then, by Proposition \ref{P:3.3} (D.2) and (D.3), we see
\begin{align*}
	d(x_0,\operatorname{supp}[f_i])&\leq d(x_0,x)+d(x,x_i)+d(x_i,\operatorname{supp}[f_i])\\
	&=d(x_0,x)+4r_i+7r_i\leq d(x_0,x)+ 11d(x,x_0)  
	=12d(x,x_0)\leq 12r,\\
	\diam(\operatorname{supp}[f_i])&\leq 2r_i\leq 2d(x,x_0)\leq 2r.
\end{align*}
Hence, 
\begin{equation}\label{14r}
\{i\in\Lambda:\,4B_i\cap B(x_0,r)\neq\emptyset\}\subset \{i\in\Lambda:\, \operatorname{supp}[f_i]\subset B(x_0,14r)\}.
\end{equation}
	
For each $i\in\Lambda$ and $r<\diam(D)$, 
\begin{eqnarray*}
&& \mcE^{\big((4B_i\times 4B_i)\cap (B(x_0,r)\times B(x_0,r))\big)}({\Ex u},{\Ex u})\\
&=&\mcE^{\big((4B_i\times 4B_i)\cap (B(x_0,r)\times B(x_0,r))\big)}({\Ex u}-[u]_i,{\Ex u}-[u]_i)\\
&=&\mcE^{\big((4B_i\times 4B_i)\cap (B(x_0,r)\times B(x_0,r))\big)} \Big( \sum_{j\in \Lambda\atop 3B_j\cap B(x_0,r)\neq\emptyset}([u]_j-[u]_i)\psi_j,\sum_{j\in \Lambda\atop 3B_j\cap B(x_0,r)\neq\emptyset}([u]_j-[u]_i)\psi_j \Big)\\
& \leq &C_1\sum_{j\in\Lambda\atop 3B_j\cap 4B_i\cap B(x_0,r)\neq\emptyset}([u]_i-[u]_j)^2\mcE(\psi_j,\psi_j) \\
&\leq &C_2\sum_{j\in\Lambda\atop 3B_j\cap 4B_i\cap B(x_0,r)\neq\emptyset}([u]_i-[u]_j)^2\frac{m(B_j)}{\phi(r_j)},
\end{eqnarray*}
where we used Lemma \ref{L:3.5} in the second equality, 
$\#\{j\in I:\,3B_j\cap 4B_i\neq \emptyset\}\leq C_1$ for some $C_1$ depending only on the parameters of (VD) by Lemma \ref{L:3.2}(b) in the first inequality,
and Lemma \ref{L:4.2} in the second inequality. Moreover, for each term in the above summation, we have 
\begin{align*}
([u]_i-[u]_j)^2\frac{m(B_j)}{\phi(r_j)}&\leq \frac{\int_{D\times D}f_i(x)f_j(y)\big(u(x)-u(y)\big)^2m(dx)m(dy)}{\int_{D\times D}f_i(x)f_j(y)m(dx)m(dy)}\cdot\frac{m(B_j)}{\phi(r_j)}\\
&\leq C_3\int_{D\times D} f_i(x)f_j(y)\frac{\big(u(x)-u(y)\big)^2}{m(B_i)\phi(r_j)}m(dx)m(dy)\\
&\leq C_4\int_{D\times D}f_i(x)f_j(y)\big(u(x)-u(y)\big)^2J(dx,dy),
\end{align*}
where we used Jensen's inequality in the first inequality,  
the fact that 
$$
m(B_i)\asymp \int_Df_i (x) m (dx), m(B_j)\asymp \int_Df_jdm
$$ 
by Proposition \ref{P:3.3} in the second inequality, 
and \eqref{e:vd1}, ${\bf J}_{\phi,\geq}$, \eqref{eqnphi} and the facts that $\frac19r_j\leq r_i\leq 9r_j$ and $d(x,y)\leq d(x,x_i)+d(x_i,x_j)+d(y,x_j)\leq 13(r_i+r_j)$ by Lemma  \ref{L:3.2}(a) and Proposition \ref{P:3.3}(D.2),(D.3) in the third inequality.   
Hence, 
\begin{align}
\begin{split}\label{eqn2.1}
&\mcE^{\big(\mathcal{O}_d\cap (B(x_0,r)\times B(x_0,r))\big)}({\Ex u},{\Ex u})\\
\leq& \, \sum_{i\in I\atop 4B_i\cap B(x_0,r)\neq\emptyset}\mcE^{\big((4B_i\times 4B_i)\cap (B(x_0,r)\times B(x_0,r))\big)}({\Ex u},{\Ex u})\\
\leq& \, C_2C_4\sum_{i\in I\atop 4B_i\cap B(x_0,r)\neq\emptyset}\sum_{j\in I\atop 3B_j\cap B(x_0,r)\neq\emptyset}
\int_{D\times D}f_i(x)f_j(y)\big(u(x)-u(y)\big)^2J(dx,dy)     \\
\leq& \,  C_2C_4\int_{B_D(x_0,14r)\times B_D(x_0,7r)}\big(u(x)-u(y)\big)^2J(dx,dy),
\end{split}
\end{align}
where we used \eqref{e:2.7}, \eqref{14r} and Proposition \ref{P:3.3} (D.i) in the last inequality. 
\end{proof}

\subsection{Off diagonal energy in $\sX\setminus \overline{D}$} Recall that $\mathcal{O}_d=\bigcup_{i\in I}4B_i\times 4B_i$. In this subsection, we consider $\mcE^{\big(\mathcal{O}_f\cap (B(x_0,r)\times B(x_0,r))\big)}$, where 
\[
\mathcal{O}_f=\big((\sX\setminus\bD)\times (\sX\setminus\bD)\big)\setminus\mathcal{O}_d.
\]

\begin{proposition}\label{P:4.4}
Suppose that  {\rm (VD)}, {\rm Cap}$(\phi)_{\leq}$ and ${\bf J}_{\phi}$  hold for $(\sX, d, m, \sE, \sF)$, 
and that $D\subset \sX$ is Alhfors regular. 
There is a constant $C>0$ so that
\[
\mcE^{\big(\mathcal{O}_f\cap (B(x_0,r)\times B(x_0,r))\big)}({\Ex u},{\Ex u})\leq  C\int_{B_D(x_0,7r)\times B_D(x_0,7r)}\big(u(x)-u(y)\big)^2J(dx,dy),
\]
for every $u\in\mcF$, $x_0\in \overline{D}$ and $0<r<\diam(D)$. 
\end{proposition}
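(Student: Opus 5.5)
For $(x,y)\in\mathcal{O}_f$ with $x\in 3B_i$, $y\in 3B_j$ (the only pairs where $\psi_i(x)\psi_j(y)\neq0$), we have $y\notin 4B_i$ and $x\notin 4B_j$. Hence $d(x,y)\ge r_i\vee r_j$ up to constants, and by Lemma~\ref{L:3.2}(c) also $d(x,y)\gtrsim d(x,\bD)\vee d(y,\bD)$. The plan is to write
\[
\Ex u(x)-\Ex u(y)=\sum_{i,j\in\Lambda}\psi_i(x)\psi_j(y)\big([u]_i-[u]_j\big),
\]
using $\sum_i\psi_i=1$ near $D$ (Lemma~\ref{L:3.5}; for $r<\diam(D)$ and $x\in B(x_0,r)$ we have $d(x,\bD)<\diam(D)$). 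Since $\psi_i(x)\psi_j(y)$ is a probability weight, Jensen gives
\[
(\Ex u(x)-\Ex u(y))^2\le\sum_{i,j}\psi_i(x)\psi_j(y)([u]_i-[u]_j)^2 .
\]
A second Jensen step, as in Proposition~\ref{P:4.3}, gives
\[
([u]_i-[u]_j)^2\le \frac{\int_{D\times D} f_i(z)f_j(w)(u(z)-u(w))^2\,m(dz)m(dw)}{\int f_i\,dm\int f_j\,dm},
\]
and the denominators are comparable to $m(B_i)m(B_j)$ by (D.4).

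Integrating over $\mathcal{O}_f\cap(B(x_0,r)\times B(x_0,r))$ against $J$, the key quantity is
\[
K_{ij}:=\int_{3B_i\times 3B_j,\ (x,y)\in\mathcal O_f}J(x,y)\,m(dx)m(dy).
\]
For $z\in\operatorname{supp}[f_i]$, $w\in\operatorname{supp}[f_j]$ we have $d(z,x)\le 10r_i+2r_i\lesssim r_i$ by (D.2), (D.3), and similarly $d(w,y)\lesssim r_j$. So $d(z,w)\le d(x,y)+C(r_i+r_j)\le C'd(x,y)$ because $d(x,y)\gtrsim r_i\vee r_j$ on $\mathcal O_f$; the same argument gives $d(x,y)\le C'' (d(z,w)+r_i+r_j)$. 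I expect the main obstacle to be the lower bound on $d(z,w)$ needed to compare $J(x,y)$ with $J(z,w)$: I need $d(z,w)\gtrsim r_i\vee r_j$ too. This should hold because when $d(x,y)\ge r_i\vee r_j$ with a large enough constant the triangle inequality works. In the remaining regime $d(x,y)\asymp r_i\asymp r_j$ (comparable radii by Lemma~\ref{L:3.2}(a)-type reasoning), I will handle it like the near-diagonal case, bounding $m(3B_i)m(3B_j)/(V\phi)\lesssim m(B_i)\,m(B_j)\,J(z,w)$ whenever $d(z,w)\lesssim r_i$, using ${\bf J}_{\phi,\geq}$. Granting the comparison $d(z,w)\lesssim d(x,y)$ together with ${\bf J}_\phi$, \eqref{eqnphi} and \eqref{e:vd1}, the bound is $J(x,y)\lesssim J(z,w)$. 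Here ${\bf J}_{\phi,\le}$ is applied to $J(x,y)$ and ${\bf J}_{\phi,\ge}$ to $J(z,w)$; the monotonicity of $V\phi$ in the distance, plus the comparison of $V(x,\cdot)$ with $V(z,\cdot)$, handles the direction needed.

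Combining these, $K_{ij}\lesssim m(B_i)m(B_j)\,\inf_{z,w}J(z,w)$, so
\[
\mcE^{(\mathcal O_f\cap\cdots)}(\Ex u,\Ex u)\lesssim \sum_{i,j}\int_{D\times D}f_i(z)f_j(w)(u(z)-u(w))^2J(dz,dw).
\]
The sum runs over $i,j$ with $3B_i\cap B(x_0,r)\neq\emptyset$, so by \eqref{e:2.7} both $\operatorname{supp} f_i$ and $\operatorname{supp} f_j$ lie in $B(x_0,7r)$. Then (D.1), i.e.\ $\sum_i f_i\le \1_D$, collapses the double sum to $\int_{B_D(x_0,7r)\times B_D(x_0,7r)}(u(z)-u(w))^2J(dz,dw)$, which is the claimed bound.
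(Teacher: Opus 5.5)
Your proposal is correct and follows essentially the same route as the paper: Jensen over the weights $\psi_i(x)\psi_j(y)$ and then over $f_i(z)f_j(w)$, the separation $d(x,y)\gtrsim r_i\vee r_j$ on $\mathcal{O}_f$, a jump-kernel comparison via ${\bf J}_\phi$, \eqref{e:vd1} and \eqref{eqnphi}, and the collapse of the double sum via (D.1) and \eqref{e:2.7}. Two minor differences: using both $y\notin 4B_i$ and $x\notin 4B_j$ gives $d(x,y)\ge r_i\vee r_j$ directly, bypassing the case analysis behind \eqref{e:3.4}, and you compare $J(x,y)$ with $J(z,w)$ pointwise rather than through the intermediate scale $d(x_i,x_j)+r_i+r_j$.

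The ``main obstacle'' you anticipate does not arise. To get $J(x,y)\lesssim J(z,w)$ you only need $d(z,w)\lesssim d(x,y)$ and $d(x,z)\lesssim d(x,y)$, which you already have. No lower bound on $d(z,w)$ and no separate comparable-radii regime is needed.
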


\begin{proof}
First, we claim that 
\begin{equation}\label{e:3.4}
d(x,y)\geq \frac19\max\{r_i,r_j\}\quad\hbox{ for every }i,j\in I,\, x\in 3B_i,\,y\in 3B_j\setminus 4B_i.
\end{equation}
To show \eqref{e:3.4}, we consider two possibilities.  

Case 1: $4B_i\cap 4B_j\neq\emptyset$. In this case, $r_i\geq r_j/9$ by Lemma \ref{L:3.2} (a), and hence
\[
d(x,y)\geq d(x_i,y)-d(x_i,x)\geq 4r_i-3r_i=r_i\geq \max\{r_i,r_j\}/9.
\]  

Case 2: $4B_i\cap 4B_j=\emptyset$. In this case, we have 
\begin{align*}
&d(x,y)\geq d(\sX\setminus4B_j,y)\geq d(\sX\setminus 4B_j,x_j)-d(y,x_j)\geq 4r_j-3r_j\geq r_j,\\
&d(x,y)\geq d(x_i,y)-d(x_i,x)\geq r_i. 
\end{align*} 

\noindent Hence, \eqref{e:3.4} holds for both cases. As a consequence, 
\begin{equation}\label{e:3.5}
d(x_i,x_j)\leq 3r_i+d(x,y)+3r_j\leq 55d(x,y)\quad\hbox{ for every }i,j\in I,\, x\in 3B_i,\,y\in 3B_j\setminus 4B_i.  
\end{equation}

 We now proceed to bound
\begin{align*}
&\mcE^{\big(\mathcal{O}_f\cap (B(x_0,r)\times B(x_0,r)\big)}({\Ex u},{\Ex u})\\
=&\int_{\mathcal{O}_f\cap \big(B(x_0,r)\times B(x_0,r)\big)} \big(\sum_{i\in\Lambda}[u]_i\psi_i(x)-\sum_{j\in\Lambda}[u]_j\psi_j(y)\big)^2J(dx,dy)\\
=& \int_{\mathcal{O}_f\cap \big(B(x_0,r)\times B(x_0,r)\big)} \big(\sum_{i\in\Lambda}\sum_{j\in\Lambda}\psi_i(x)\psi_j(y)([u]_i-[u]_j)\big)^2J(dx,dy)\\
\leq &\int_{\mathcal{O}_f\cap \big(B(x_0,r)\times B(x_0,r)\big)} \sum_{i\in \Lambda}\sum_{j\in\Lambda}\psi_i(x)\psi_j(y)\big([u]_i-[u]_j\big)^2J(dx,dy)\\
\leq&\sum_{i\in\Lambda\atop 3B_i\cap B(x_0,r)\neq\emptyset}\sum_{j\in\Lambda\atop 3B_j\cap B(x_0,r)\neq\emptyset}\big([u]_i-[u]_j\big)^2\int_{3B_i\times (3B_j\setminus 4B_i)}\psi_i(x)\psi_j(y)J(dx,dy)\\
\leq&C_1\sum_{i\in\Lambda\atop 3B_i\cap B(x_0,r)\neq\emptyset}\sum_{j\in\Lambda\atop 3B_j\cap B(x_0,r)\neq\emptyset}\frac{\big([u]_i-[u]_j\big)^2m(B_i)m(B_j)}{m\big(B(x_i,d(x_i,x_j)+r_i+r_j)\big)\phi(x_i,d(x_i,x_j)+r_i+r_j)}\\
\leq&C_2\sum_{i\in\Lambda\atop 3B_i\cap B(x_0,r)\neq\emptyset}\sum_{j\in\Lambda\atop 3B_j\cap B(x_0,r)\neq\emptyset}\frac{\int_{D\times D}f_i(z)f_j(w)\big(u(z)-u(w)\big)^2m(dz)m(dw)}{m\big(B(x_i,d(x_i,x_j)+r_i+r_j\big)\phi(x_i,d(x_i,x_j)+r_i+r_j)}\\
\leq&C_3\sum_{i\in\Lambda\atop 3B_i\cap B(x_0,r)\neq\emptyset}\sum_{j\in\Lambda\atop 3B_j\cap B(x_0,r)\neq\emptyset}\int_{D\times D}f_i(z)f_j(w)\big(u(z)-u(w)\big)^2J(dz,dw)\\
\leq&C_3\int_{B_D(x_0,7r)\times B_D(x_0,7r)}\big(u(z)-u(w)\big)^2J(dz,dw).
\end{align*}
The second equality is due to Lemma \ref{L:3.5}.
The first inequality above is due to Lemma \ref{L:3.5} and the  Jensen's inequality. The second inequality is due to the 
 fact $3B_i\times 4B_i\subset 4B_i\times 4B_i\subset \mathcal{O}_d$, and hence $(3B_i\times 4B_i)\cap\mathcal{O}_f=\emptyset$.
In the third inequality, we used {\rm (VD)}, \eqref{e:vd1}, ${\bf J}_{\phi,\leq}$, \eqref{e:3.4}, \eqref{e:3.5} and \eqref{eqnphi}.
The fourth inequality holds because of the  Jensen's inequality and $\int_Df_i (x) m (dx)\asymp m(B_i),\int_Df_jdm\asymp m(B_j)$,
while the fifth inequality is due to 
 \eqref{eqnphi}, the fact $d(x,y)\leq d(x_i,y_i)+9r_i+9r_j$ for every $x\in \operatorname{supp}[f_i],y\in \operatorname{supp}[f_j]$ by Proposition \ref{P:3.3} and ${\bf J}_{\phi,\geq}$. The  last inequality holds due to the fact $\sum_{i\in\Lambda}f_i\leq 1$ and \eqref{e:2.7}. This completes the proof of the proposition.
\end{proof}

\subsection{Cross energy} Finally, we consider the energy $\mcE^{(\mathcal{O}_c\cap (B(x_0,r)\times B(x_0,r)))}$, 
where 
\[
\mathcal{O}_c :=(\sX\setminus \overline{D})\times D.
\] 

\begin{proposition}\label{P:4.5}
Suppose that  {\rm (VD)}, {\rm Cap}$(\phi)_{\leq}$ and ${\bf J}_{\phi}$  hold for $(\sX, d, m, \sE, \sF)$, and that $D\subset \sX$ is Alhfors regular. There is a constant $C>0$ so that
\[
\mcE^{\big(\mathcal{O}_c\cap (B(x_0,r)\times B(x_0,r))\big)}({\Ex u},{\Ex u})\leq  C\int_{B_D(x_0,7r)\times B_D(x_0,r)}\big(u(x)-u(y)\big)^2J(dx,dy),
\]
for every $u\in\bar\sF$, $x_0\in \overline{D}$ and $0<r<\diam(D)$. 
\end{proposition}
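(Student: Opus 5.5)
We need to bound $\int_{(B(x_0,r)\setminus\bD)\times B_D(x_0,r)} (\Ex u(x)-u(y))^2 J(dx,dy)$. The plan is to write, for $x\in\sX\setminus\bD$ with $x\in B(x_0,r)$ (so $d(x,\bD)<r<\diam(D)$), $\Ex u(x)-u(y)=\sum_{i\in\Lambda}\psi_i(x)([u]_i-u(y))$, using Lemma \ref{L:3.5}. Jensen's inequality then gives $(\Ex u(x)-u(y))^2\le \sum_{i\in\Lambda}\psi_i(x)([u]_i-u(y))^2$. Applying Jensen again to the average defining $[u]_i$ in \eqref{e:ui} yields
\[
([u]_i-u(y))^2\le \frac{\int_D f_i(z)(u(z)-u(y))^2m(dz)}{\int_D f_i\,dm}\le \frac{C}{m(B_i)}\int_D f_i(z)(u(z)-u(y))^2m(dz),
\]
by Proposition \ref{P:3.3}(D.4). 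Only indices with $3B_i\cap B(x_0,r)\ne\emptyset$ contribute, and by \eqref{e:2.7} these have $\operatorname{supp}[f_i]\subset B(x_0,7r)$.

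After this, the energy is bounded by
\[
C\sum_{i\in\Lambda,\,3B_i\cap B(x_0,r)\neq\emptyset}\frac{1}{m(B_i)}\int_{B_D(x_0,r)}\int_D f_i(z)(u(z)-u(y))^2\Big(\int_{3B_i}J(x,y)m(dx)\Big)m(dz)m(dy).
\]
The key step is the kernel comparison
\[
\frac{1}{m(B_i)}\int_{3B_i}J(x,y)\,m(dx)\le C\,J(z,y)\quad\text{for } z\in\operatorname{supp}[f_i],\ y\in D.
\]
To prove it, note that $y\in D$ and $x\in 3B_i$ give $d(x,y)\ge d(x,\bD)\ge 2r_i$ by Lemma \ref{L:3.2}(c). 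Moreover $d(z,x)\le 10 r_i$ by (D.2)–(D.3), so $d(z,y)\le d(x,y)+10r_i\le 6d(x,y)$, and $d(x,y)\le d(z,y)+10 r_i$. The second inequality is harmless when $d(z,y)\gtrsim r_i$. When $d(z,y)\ll r_i$, we instead use $d(x,y)\asymp r_i\vee d(z,y)$ together with monotonicity. In either case ${\bf J}_{\phi,\le}$ gives $J(x,y)\le C/(V(y,d(x,y))\phi(d(x,y)))$ with $d(x,y)\ge c\,d(z,y)$. Since $V(y,\cdot)\phi(\cdot)$ is increasing, this is at most $C/(V(y,c\,d(z,y))\phi(c\,d(z,y)))$, which by \eqref{eqnphi}, \eqref{e:vd1} and ${\bf J}_{\phi,\ge}$ is $\le C' J(z,y)$. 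Integrating over $3B_i$ contributes a factor $m(3B_i)\le Cm(B_i)$, which cancels the $1/m(B_i)$. I only need the lower bound $d(x,y)\ge d(z,y)/6$, which makes this clean.

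Finally, sum over $i$ using $\sum_i f_i\le \1_D$ from (D.1) and $\operatorname{supp}[f_i]\subset B(x_0,7r)$. This bounds the energy by $C\int_{B_D(x_0,7r)\times B_D(x_0,r)}(u(z)-u(y))^2J(dz,dy)$, as claimed; the symmetric part $D\times(\sX\setminus\bD)$ is handled identically if it is needed. I expect the main obstacle to be the kernel comparison step, specifically making sure the distance lower bound $d(x,y)\gtrsim d(z,y)$ holds uniformly. It does, since $x$ stays at distance $\ge 2r_i$ from $\bD$ while $z$ lies within $10r_i$ of $x$.
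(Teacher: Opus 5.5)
Your proof is correct and is essentially the paper's argument. You use Lemma \ref{L:3.5} and Jensen twice, then a kernel comparison based on $d(x,y)\ge 2r_i$ for $x\in 3B_i$, $y\in D$. The paper routes that comparison through $x_i$ via \eqref{e:3.6}--\eqref{e:3.7}, whereas you compare $d(x,y)$ with $d(z,y)$ directly.

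One minor slip: (D.2)--(D.3) as stated only give $d(z,x)\le 12r_i$, not $10r_i$. Your constant $1/6$ therefore becomes $1/7$, which changes nothing.
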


\begin{proof}
First, we claim that 
\begin{align}
\label{e:3.6}
d(x,y)\geq \frac{2}{5}d(x_i,y)\quad&\hbox{ for every }i\in I,\,x\in 3B_i,\,y\in D,\\
\label{e:3.7}
d(x_i,y)\geq \frac{5}{14}d(z,y)\quad&\hbox{ for every }i\in I,\, z\in \operatorname{supp}[f_i],\, y\in D.
\end{align}
To see \eqref{e:3.6}, we notice that $d(x,x_i)\leq 3r_i$ and $d(x,y)\geq d(x_i,y)-d(x,x_i)\geq d(x_i,\overline{D})-3r_i\geq 5r_i-3r_i=2r_i$ by (D.ii), so  
\[
\frac{d(x,y)}{d(x_i,y)}\geq \frac{d(x,y)}{d(x,y)+d(x_i,x)}\geq \frac{d(x,y)}{d(x,y)+3r_i}\geq \frac{2r_i}{2r_i+3r_i}=\frac25. 
\]
To see \eqref{e:3.7}, we notice that $d(x_i,y)\geq d(x_i,\bD)\geq 5r_i$ by (D.ii) and $d(x_i,z)\leq 9r_i$ by Proposition \ref{P:3.3} (D.2) and (D.3), so 
\[
\frac{d(x_i,y)}{d(z,y)}\geq \frac{d(x_i,y)}{d(x_i,y)+d(x_i,z)}\geq \frac{d(x_i,y)}{d(x_i,y)+9r_i}\geq \frac{5r_i}{5r_i+9r_i}=\frac{5}{14}.
\]
We now proceed to estimate
\begin{align*}
&\mcE^{\big(\mathcal{O}_c\cap (B(x_0,r)\times B(x_0,r))\big)}({\Ex u},{\Ex u})\\
=&\int_{(B(x_0,r)\setminus \overline{D})\times B_D(x_0,r)}\big(\sum_{i\in \Lambda}[u]_i\psi_i(x)-u(y)\big)^2J(dx,dy)\\
\leq&\int_{(B(x_0,r)\setminus \overline{D})\times B_D(x_0,r)}\sum_{i\in \Lambda}\psi_i(x)\big([u]_i-u(y)\big)^2J(dx,dy)\\
\leq&\sum_{i\in \Lambda\atop 3B_i\cap B(x_0,r)\neq\emptyset}\int_{3B_i\times B_D(x_0,r)}\big([u]_i-u(y)\big)^2J(dx,dy)\\
\leq&C_1\sum_{i\in\Lambda\atop 3B_i\cap B(x_0,r)\neq\emptyset}\int_{3B_i\times B_D(x_0,r)}\frac{\big([u]_i-u(y)\big)^2}{m\big(B(y,d(x_i,y))\big)\phi\big(d(x_i,y)\big)}m(dx)m(dy)\\
=&C_1\sum_{i\in\Lambda\atop 3B_i\cap B(x_0,r)\neq\emptyset}\int_{ B_D(x_0,r)}\frac{m(3B_i)\big([u]_i-u(y)\big)^2}{m\big(B(y,d(x_i,y))\big)\phi\big(d(x_i,y)\big)}m(dy)\\
\leq&C_2\sum_{i\in\Lambda\atop 3B_i\cap B(x_0,r)\neq\emptyset}\int_{B_D(x_0,r)\times D}\frac{f_i(z)\big(u(z)-u(y)\big)^2}{m\big(B(y,d(x_i,y))\big)\phi\big(d(x_i,y)\big)}m(dy)m(dz)\\
\leq&C_3\sum_{i\in\Lambda\atop 3B_i\cap B(x_0,r)\neq\emptyset}\int_{B_D(x_0,r)\times D}f_i(z)\big(u(z)-u(y)\big)^2J(dy,dz)\\
\leq &C_3\int_{B_D(x_0,r)\times B_D(x_0,7r)}\big(u(y)-u(z)\big)^2J(dy,dz).
\end{align*}
In the above, we used Jensen's inequality and the facts $\psi_i(x)\geq 0$, $\sum_{i\in I}\psi_i(x)=1$ in the first inequality; 
used the fact that $0\leq\psi_i\leq 1$ and $\psi_i$ supports on $3B_i$ in the second inequality; used \eqref{e:3.6}, \eqref{e:vd1}, \eqref{eqnphi}, and ${\bf J}_{\phi,\leq}$ in the third inequality; used Jensen's inequality and the fact that $\int_Dfdm\asymp m(3B_i)$ in the fourth inequality; used \eqref{e:3.7}, \eqref{e:vd1}, \eqref{eqnphi} and ${\bf J}_{\phi,\geq}$ in the fifth inequality; and we used \eqref{e:2.7} and the fact that $\sum_{i\in I}f_i\leq 1$ in the last inequality.  
\end{proof}

\medskip

\subsection{Proof of Proposition \ref{P:4.1}} 
We first recall the following lemma from \cite[Lemma 2.1]{CKW}.

\begin{lemma}\label{L:4.6}
Suppose $(\sX,d,m)$ is {\rm(VD)}. There is a constant $c>0$ that depends only on the bounds in \eqref{e:vd} and \eqref{eqnphi} so that for every $x\in \sX$ and $r>0$,
$$
\int_{\sX\setminus B(x, r)} \frac1{m(B(x, d(x,y)) \phi (d(x, y))} m(dy) \leq \frac{c}{\phi(r)}.
$$
\end{lemma}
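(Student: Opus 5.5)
The plan is to decompose $\sX\setminus B(x,r)$ into dyadic annuli and use (VD) together with the lower growth bound in \eqref{eqnphi}. For $k\geq 0$ write $A_k:=B(x,2^{k+1}r)\setminus B(x,2^kr)$, so that $\sX\setminus B(x,r)=\bigcup_{k\geq 0}A_k$. If $(\sX,d)$ is bounded, only finitely many $A_k$ are non-empty, which does not affect the argument.

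For $y\in A_k$ we have $d(x,y)\geq 2^kr$. Since $\phi$ is increasing and $r\mapsto m(B(x,r))$ is non-decreasing, the integrand on $A_k$ is at most
\[
\frac{1}{m(B(x,2^kr))\,\phi(2^kr)}.
\]
Moreover $m(A_k)\leq m(B(x,2^{k+1}r))\leq C_1\,m(B(x,2^kr))$ by (VD). Hence the contribution of $A_k$ is at most $C_1/\phi(2^kr)$.

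Next apply the lower bound in \eqref{eqnphi} with $R=2^kr$, which gives $\phi(2^kr)\geq c_1 2^{k\beta_1}\phi(r)$. Summing over $k$,
\[
\int_{\sX\setminus B(x,r)}\frac{m(dy)}{m(B(x,d(x,y)))\,\phi(d(x,y))}\leq \frac{C_1}{c_1\phi(r)}\sum_{k\geq 0}2^{-k\beta_1}=\frac{C_1}{c_1(1-2^{-\beta_1})\,\phi(r)}.
\]
The resulting constant depends only on $C_1$, $c_1$ and $\beta_1$, as required.

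There is no real obstacle here. The only point to check is measurability: $y\mapsto m(B(x,d(x,y)))$ is a monotone function composed with a continuous one, so it is Borel measurable. Atoms cause no trouble, since the bound uses monotonicity rather than continuity of $r\mapsto m(B(x,r))$.
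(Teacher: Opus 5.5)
Your proof is correct and is essentially the standard argument. The paper gives no proof of its own and simply cites Lemma 2.1 of Chen--Kumagai--Wang, which is proved by the same decomposition: split into dyadic annuli, bound each annulus by (VD) and monotonicity of $\phi$, and sum the geometric series $\sum_k 2^{-k\beta_1}$ coming from the lower scaling bound in \eqref{eqnphi}.
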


As a consequence, we have the following estimates.

\begin{lemma}\label{L:4.7}
Suppose $(\sX,d,m,\mcE,\mcF)$ satisfies {\rm(VD)} and ${\bf J}_{\phi,\leq}$. There is a constant $c>0$ that depends only on the bounds in \eqref{e:vd}, \eqref{eqnphi} and ${\bf J}_{\phi,\leq}$ so that for every $f\in L^2(\sX;m)$ and $r>0$,
\begin{align*}
\int_{\sX\times\sX\atop d(x,y)\geq r}(f(x)-f(y))^2J(dx,dy)
\leq \frac{c}{\phi(r)}\|f\|_{L^2(\sX;m)}^2.
\end{align*}
\end{lemma}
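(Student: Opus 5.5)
The plan is to use the elementary inequality $(a-b)^2\leq 2a^2+2b^2$ to reduce the energy integral to a weighted $L^2$ norm of $f$, and then bound the weight with Lemma \ref{L:4.6} together with ${\bf J}_{\phi,\leq}$.

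First, on the set $\{(x,y):d(x,y)\geq r\}$ I write
\[
(f(x)-f(y))^2\leq 2f(x)^2+2f(y)^2 .
\]
The jump measure $J(dx,dy)=J(x,y)m(dx)m(dy)$ is symmetric, and the region $\{d(x,y)\geq r\}$ is symmetric in $(x,y)$. Hence the two resulting terms are equal, and the left-hand side is at most
\[
4\int_{\sX} f(x)^2\Big(\int_{\sX\setminus B(x,r)} J(x,y)\,m(dy)\Big)m(dx).
\]
Here $\{y:d(x,y)\geq r\}=\sX\setminus B(x,r)$, and Tonelli's theorem justifies the exchange of the order of integration because every integrand is nonnegative.

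Second, for fixed $x$, I apply ${\bf J}_{\phi,\leq}$:
\[
J(x,y)\leq \frac{C_2}{V(x,d(x,y))\,\phi(d(x,y))}
\quad\hbox{for } y\neq x .
\]
Note that $V(x,d(x,y))=m(B(x,d(x,y)))$, which is exactly the denominator in Lemma \ref{L:4.6}. That lemma then gives
\[
\int_{\sX\setminus B(x,r)}J(x,y)\,m(dy)\leq \frac{C_2\,c}{\phi(r)}
\]
uniformly in $x$. The diagonal $\{y=x\}$ lies inside $B(x,r)$, so it is excluded and atoms of $m$ cause no issue.

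Third, substituting this bound into the expression from the first step gives the claimed estimate with constant $4cC_2$. This constant depends only on \eqref{e:vd}, \eqref{eqnphi} and ${\bf J}_{\phi,\leq}$.

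No step here is a real obstacle. The only point needing care is the symmetry/Tonelli bookkeeping, that is, checking that the two cross terms coincide.
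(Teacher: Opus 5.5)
Your proposal is correct and matches the paper's proof essentially line for line: $(a-b)^2\leq 2a^2+2b^2$, then symmetry of $J$ to reduce to $4\int_{\sX} f(x)^2\big(\int_{\sX\setminus B(x,r)}J(x,y)\,m(dy)\big)m(dx)$, then ${\bf J}_{\phi,\leq}$ together with Lemma~\ref{L:4.6}. Your tracking of the factor $4$ is slightly more careful than the paper's displayed chain, which drops it in one equality; this affects only the constant.
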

\begin{proof}
\begin{align*}
&\quad\ \int_{\sX\times\sX\atop d(x,y)\geq r}(f(x)-f(y))^2J(dx,dy)\leq 	\int_{\sX\times\sX\atop d(x,y)\geq r}(2f(x)^2+2f(y)^2)J(dx,dy)\\
&=4\int_{\sX\times\sX\atop d(x,y)\geq r}f(x)^2J(dx,dy)= \int_{\sX}f(x)^2\big(\int_{\sX\setminus B(x,r)}J(x,y)m(dy)\big)m(dx)\\
&\leq \frac{c}{\phi(r)}\int_\sX f(x)^2m(dx)=\frac{c}{\phi(r)}\|f\|^2_{L^2(\sX;m)},
\end{align*} 
where the last inequality holds due to ${\bf J}_{\phi,\leq}$ and Lemma \ref{L:4.6}.
\end{proof}

\begin{proof}[Proof of Proposition \ref{P:4.1}]
 Estimate \eqref{e:4.1b} follows from Propositions \ref{P:4.3}, \ref{P:4.4} and \ref{P:4.5}. When $D$ is unbounded, it follows  
 by taking $r\to \infty$ in \eqref{e:4.1b}  and Proposition \ref{P:3.7} that $\Ex u\in\sF$ and \eqref{e:4.1a} holds.

It remains to prove $\Ex u\in\sF$ when $D$ is bounded. For $0<r<1$, define 
\[
D_r :=  \bigcup_{x_0\in\bD}B(x_0, r\,\diam(D)) . 
\] 
By the regularity of the Dirichlet form $(\mcE,\mcF)$ on $L^2(\sX; m)$, there is some
$\psi\in C_c(\sX)\cap \sF$ such that  
\[
\psi|_{D_{1/4}}=1,\,\ \psi|_{\sX\setminus D_{1/2}}=0\,\ \hbox{ and }\ 0\leq\psi\leq1. 
\]
First, we can check $\psi\Ex u\in\mcF$ for each $u\in\bar \sF$. In fact,
\begin{align}
&\quad\ \int_{\sX\times\sX}(\psi(x)\Ex u(x)-\psi(y)\Ex u(y))^2J(dx,dy)\nonumber\\
&\leq 2\int_{\sX\times \sX}\psi(x)^2(\Ex u(x)-\Ex u(y))^2J(dx,dy)+2\int_{\sX\times \sX}\Ex u(y)^2(\psi(x)-\psi(y))^2J(dx,dy)\nonumber\\
&\leq 2\int_{D_{1/2}\times\sX}(\Ex u(x)-\Ex u(y))^2J(dx,dy)+2\int_{\sX\times(\sX\setminus D_{1/8})}\Ex u(y)^2(\psi(x)-\psi(y))^2J(dx,dy)\nonumber\\
&\quad\ +2\int_{(\sX\setminus D_{1/4})\times D_{1/8}}\Ex u(y)^2(\psi(x)-\psi(y))^2J(dx,dy). \label{e:3.8}
\end{align}
We claim the sum is bounded by $C_0 \bar \sE_1 (u, u)$, where $C_0$ is a constant that depends only on $\psi$, $\diam (D)$ and the parameters in
 {\rm (VD)}, {\rm Cap}$(\phi)_{\leq}$ and ${\bf J}_{\phi}$. 
 Indeed, for the first term,
 by Lemma \ref{lemmavd2}, we can choose a finite subset $z_j\in \bD,1\leq j\leq N$ such that $D_{1/2}\subset\bigcup_{j=1}^N B(z_j,3\diam(D)/4)$. 
Then by \eqref{e:4.1b}, 
\[
\sum_{j=1}^N\int_{B(z_j,3\diam(D)/4)\times B(z_j,\diam(D))}(\Ex u(x)-\Ex u(y))^2J(dx,dy)\leq C_1N\bar\sE(u,u).
\]
Moreover, by Lemma \ref{L:4.7} and Proposition \ref{P:3.7},
\[
\sum_{j=1}^N\int_{B(z_j,3\diam(D)/4)\times (\sX\setminus B(z_j,\diam(D)))} (\Ex u(x)-\Ex u(y))^2J(dx,dy)\leq \frac{C_2N}{\phi(\diam(D)/4)}\|u\|^2_{L^2(\bD;m_0)}.
\]
It follows that the first term of \eqref{e:3.8} is finite. Next, we note that $\Ex u=\sum_{i\in \Lambda:\,3B_i\cap (\sX\setminus D_{1/8}) \neq\emptyset}  [u]_i\psi_i$ on $\sX\setminus D_{1/8}$, so $\Ex u$ is bounded on $\sX\setminus D_{1/8}$ as $\#\{i\in \Lambda:\,3B_i\cap (\sX\setminus D_{1/8}) \neq\emptyset \}<\infty$ by Lemma \ref{L:3.2}(e). Hence the second term of \eqref{e:3.8} is bounded by a multiple of $\mcE(\psi,\psi)$. Finally, for the third term of \eqref{e:3.8}  
\begin{align*}
&\quad\ \int_{(\sX\setminus D_{1/4})\times D_{1/8}}\Ex u(y)^2(\psi(x)-\psi(y))^2J(dx,dy)\leq
4\int_{(\sX\setminus D_{1/4})\times D_{1/8}}\Ex u(y)^2J(dx,dy)\\
&\leq 4\int_{D_{1/8}}\Ex u(y)^2\int_{\sX\setminus B(x,\diam(D)/8)}J(x,y)m(dx)m(dy)\\
&\leq 4\int_{D_{1/8}}\Ex u(y)^2\frac{C_3}{\phi(\diam(D)/8)}m(dy)\leq \frac{C_4}{\phi(\diam(D)/8)}\|u\|_{L^2(\bD;m_0)}
\end{align*}
by Lemma \ref{L:4.6} and Proposition \ref{P:3.7}. 
This proves the claim that \eqref{e:3.8} is bounded by $C_0 \bar \sE (u, u)$. Note that as $\psi\Ex u\in L^2(\sX;m)$ by \eqref{e:3.8b}, we have  $\psi\Ex u\in\sF$.

Next, observe that by Lemma \ref{L:3.2}(e),  $\#\{i\in \Lambda:\,3B_i\cap (\sX\setminus D_{1/4}) \neq\emptyset \}<\infty$.
Thus we have  by Lemma \ref{L:4.2} that 
$(1-\psi)\Ex u=\sum_{i\in \Lambda:\,3B_i\cap (\sX\setminus D_{1/4})\neq\emptyset}(1-\psi) [u]_i \psi_i\in\sF$  with 
\begin{align*}
\sE( (1-\psi)\Ex u, (1-\psi)\Ex u)^{1/2}  
&\leq \sum_{i\in \Lambda:\,3B_i\cap (\sX\setminus D_{1/4}) \neq\emptyset}   [u]_i  \left( \| 1-\psi\|_\infty \sE(\psi_i, \psi_i)^{1/2} + \|\psi_i\|_\infty \sE( \psi, \psi)^{1/2} \right) \\
&\leq  c  \sum_{i\in \Lambda:\,3B_i\cap (\sX\setminus D_{1/4}) \neq\emptyset }  \| u\|_{L^2(\bar D; m_0)}
\end{align*}
It follows that $\Ex u=\psi \Ex u+(1-\psi)\Ex u\in \sF$ and \eqref{e:4.1a} holds. 
\end{proof}

We end this section by showing that $(\bar\sE,\bar\sF)$ is a regular Dirichlet form on $L^2(\bD;m_0)$.

\begin{corollary}\label{C:4.8}
Suppose that $(\sX,d,m,\sE,\sF)$ satisfies {\rm (VD)}, {\rm Cap}$(\phi)_{\leq}$ and ${\bf J}_{\phi}$,
 and that $D\subset \sX$ is Ahlfors regular. Then, $(\bar \sE,\bar\sF)$ is a regular Dirichlet form on $L^2(\bD;m_0)$.  
\end{corollary}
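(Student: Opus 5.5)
We show that $\bar\sF\cap C_c(\bD)$ is dense in $\bar\sF$ for the $\bar\sE_1^{1/2}$-norm and uniformly dense in $C_c(\bD)$. The tool is the extension operator $\Ex$ of Definition \ref{D:3.6}, used together with the regularity of $(\sE,\sF)$ on $L^2(\sX;m)$. Write $R$ for the restriction map $f\mapsto f|_{\bD}$. We will use two facts:
\[
R(\sF)\subset\bar\sF \quad\text{with}\quad \bar\sE_1(Rf,Rf)\leq \sE_1(f,f),
\]
which holds because $\bar\sE$ only integrates over $D\times D\subset\sX\times\sX$ and $m(\partial D)=0$ by Lemma \ref{L:2.5}(a); and $R\circ\Ex=\mathrm{id}$ on $L^2(\bD;m_0)$.

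\emph{$\bar\sE_1$-density.} Take $u\in\bar\sF$. By Proposition \ref{P:4.1}, $\Ex u\in\sF$. Since $(\sE,\sF)$ is regular, there are $g_n\in\sF\cap C_c(\sX)$ with $\sE_1(g_n-\Ex u,g_n-\Ex u)\to0$. Set $u_n:=Rg_n$. Each $u_n$ is continuous on $\bD$, and its support is contained in $\mathrm{supp}[g_n]\cap\bD$, which is compact because $\bD$ is closed. So $u_n\in\bar\sF\cap C_c(\bD)$, and
\[
\bar\sE_1(u_n-u,u_n-u)=\bar\sE_1\bigl(R(g_n-\Ex u),R(g_n-\Ex u)\bigr)\leq \sE_1(g_n-\Ex u,g_n-\Ex u)\to0 .
\]
In this step we also have to identify $Rg_n$ with an element of $L^2(\bD;m_0)$. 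This is immediate since $m_0=m|_{\bD}$.

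\emph{Uniform density.} Take $v\in C_c(\bD)$. By Tietze's extension theorem, $v$ extends to some $\tilde v\in C_c(\sX)$; for this we use local compactness and a compactly supported cutoff. Regularity of $(\sE,\sF)$ gives $h_n\in\sF\cap C_c(\sX)$ with $\|h_n-\tilde v\|_\infty\to0$. Then $Rh_n\in\bar\sF\cap C_c(\bD)$ and $\|Rh_n-v\|_{\infty}\leq\|h_n-\tilde v\|_\infty\to0$.

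It remains to recall that $(\bar\sE,\bar\sF)$ is a Dirichlet form on $L^2(\bD;m_0)$, which was stated from \cite{C}; closedness is part of that. We also need $m_0$ to be a Radon measure with full support on $\bD$. Full support follows from the Ahlfors regularity \eqref{e:ahlfors}: every ball centered at a point of $\bD$ contains a ball centered at a point of $D$, and the latter has positive $m_0$-measure.

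The main obstacle is the first step, namely knowing that $\Ex u\in\sF$ with $\sE_1(\Ex u,\Ex u)$ finite for every $u\in\bar\sF$. That is exactly Proposition \ref{P:4.1}. Once it is available, the remaining steps are routine restriction arguments.
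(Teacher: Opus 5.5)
Your proposal is correct and follows the paper's proof. You get uniform density from a Tietze extension to $C_c(\sX)$, then regularity of $(\sE,\sF)$, then restriction; you get $\bar\sE_1$-density by approximating $\Ex u\in\sF$ (Proposition \ref{P:4.1}) with functions in $\sF\cap C_c(\sX)$ and restricting to $\bD$, using that restriction maps $\sF$ into $\bar\sF$ with $\bar\sE_1(Rf,Rf)\le\sE_1(f,f)$. One minor remark: full support of $m_0$ on $\bD$ needs only that $D$ is open and $m$ has full support, not Ahlfors regularity.
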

\begin{proof}
First, for every $u\in C_c(\bD)$, we can find $v\in C_c(\sX)$ such that $v|_{\bD}=u$ by Tietze extension theorem. Then, by the regular property, there are $v_n\in \sF\cap C_c(\sX),n\geq 1$ such that $\|v_n-v\|_{L^\infty}\to 0$ as $n\to\infty$. Then, $u_n=v_n|_{\bD}\in{\bar \sF}\cap C_c(\bD)$ and $\|u_n-u\|_{L^\infty}\to 0$ as $n\to\infty$. This shows $C_c(\bD)\cap {\bar \sF}$ is dense in $C_c(\bD)$. 
	
Next, by Proposition \ref{P:4.1}, $\Ex u\in\sF$ for every $u\in{\bar \sF}$. There is a sequence $v_n\in\sF\cap C_c(\sX),n\geq 1$ such that $\|v_n-\Ex u\|_{\sF}\to 0$ as $n\to\infty$, where $\|f\|_{\sF}=\sqrt{\mcE(f,f)+\|f\|^2_{L^2(\sX;m)}}$ for $f\in\sF$. Then, $v_n=u_n|_\bD\in{\bar \sF}$ and $\|v_n|_{\bD}-v\|_{{\bar \sF}}\to 0$ as $n\to\infty$. This shows that $\sF\cap C_c(\bD)$ is dense in ${\bar \sF}$  with respect to the norm $\|f\|_{{\bar \sF}}=\sqrt{\bar\sE(f,f)+\|f\|^2_{L^2(\bD;m_0)}}$.
\end{proof}

\section{Heat kernel estimates}\label{S:5}

In this section, we prove that the reflected jump diffusion satisfies mixed-stable-like heat kernel estimates ${\bf HK}(\phi)$. Note that  
under ({\rm VD}) and {(\rm QRVD)}, ${\bf J}_\phi+{\rm CSJ}(\phi)\Longleftrightarrow{\bf HK}(\phi)$
by \cite[Theorem 1.13]{CKW} and \cite[Theorem 1.20]{Ma} (see Remark \ref{R:2.7}).
 In view of Lemma \ref{L:2.5}, it suffices to show  ${\rm CSJ}(\phi)$ holds for the reflected MMD $(\overline D, d, m_0, \bar \sE, \bar \sF)$. 
Below, we introduce an equivalent formulation for ${\rm CSJ}(\phi)$ that is easier to verify in our setting.

\begin{definition}\label{D:4.1}\rm 
We say that condition {\rm CSJB($\phi$)} holds for $(\sX,d,m,\mcE,\mcF)$ if there are positive constants $C_1,C_2,c$ and $\lambda\geq 3$ such that for every $r\in(0,c\,\diam(\sX))$, almost all $x_0\in \sX$ and any $f\in \mcF$, there is a cut-off function $\varphi\in\mcF$ for $B(x_0,r)\subset B(x_0,2r)$ so that 
\begin{eqnarray*}
&& \int_{B(x_0,3r)\times B(x_0,3r)}f(x)^2 \big(\varphi(x)-\varphi(y)\big)^2J(dx,dy)\\
&\leq &C_1\int_{B(x_0,\lambda r)\times B(x_0,\lambda r)}\big(f(x)-f(y)\big)^2J(dx,dy)+\frac{C_2}{\phi (r)} \int_{B(x_0,\lambda r)}f(x)^2 m(dx).
\end{eqnarray*}
\end{definition}

\medskip

\begin{lemma}\label{L:5.2}
Suppose  that {\rm (VD)} and ${\bf J}_{\phi,\leq}$ hold for $(\sX, d, m, \mcE,\mcF)$. Then {\rm CSJ$(\phi)$}$\Longleftrightarrow ${\rm CSJB($\phi$)}. 
 \end{lemma}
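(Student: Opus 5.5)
We prove both implications by comparing the left-hand sides and the energy windows of the two conditions. The only tool needed beyond (VD) is Lemma \ref{L:4.6} together with ${\bf J}_{\phi,\leq}$: jumps of length at least $\rho$ out of any set carry energy at most $c\,\phi(\rho)^{-1}$ times the $L^2$ mass there.

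\emph{CSJ$(\phi)\Rightarrow$CSJB$(\phi)$.} Given $x_0$, $r$ and $f$, apply CSJ$(\phi)$ with $R=r$ and the same $r$; this gives a cut-off $\varphi$ for $B(x_0,r)\subset B(x_0,2r)$. The left side of CSJB$(\phi)$ is at most $\int_{B(x_0,3r)}f^2\,d\Gamma(\varphi,\varphi)$ (up to the factor $2$ from symmetrization), and $B(x_0,3r)\supset B(x_0,R+(1+C_0)r)$ because $C_0\le 1$. Split $\int_{B(x_0,3r)}$ into the part over $B(x_0,(2+C_0)r)$, which CSJ$(\phi)$ controls, and the part over the annulus $B(x_0,3r)\setminus B(x_0,(2+C_0)r)$. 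On that annulus $\varphi(x)=0$, so the integrand is $f(x)^2\varphi(y)^2J(dx,dy)$ with $y\in B(x_0,2r)$. Hence $d(x,y)\ge C_0 r$, and Lemma \ref{L:4.6} bounds this part by $c\,\phi(C_0r)^{-1}\int_{B(x_0,3r)}f^2\,dm$. By \eqref{eqnphi}, $\phi(C_0r)^{-1}\le c'\phi(r)^{-1}$. The right side of CSJ$(\phi)$ is dominated by the right side of CSJB$(\phi)$ with $\lambda=3$, since $U\times U^*\subset B(x_0,3r)^2$. The range $r<c\,\diam(\sX)$ is adjusted by shrinking $c$.

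\emph{CSJB$(\phi)\Rightarrow$CSJ$(\phi)$.} This is the main step. Given $0<r\le R$ and $x_0$, we need a cut-off for $B(x_0,R)\subset B(x_0,R+r)$ whose energy is localized to the thin annulus.
\begin{enumerate}
\item Cover $\overline{B(x_0,R+r/2)}\setminus B(x_0,R+r/4)$ by balls $B(z_k,s)$ with $s=\epsilon r$ and $\epsilon$ small, using Lemma \ref{lemmavd2}. The centres are $s$-separated and the multiplicity of $\{B(z_k,\lambda s)\}$ is bounded.
\item Apply CSJB$(\phi)$ at each $z_k$ with radius $s$ to get cut-offs $\varphi_k$.
\item Set $\varphi=\max\bigl(\1_{B(x_0,R+r/4)},\max_k\varphi_k\bigr)$. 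This is a cut-off for $B(x_0,R)\subset B(x_0,R+r)$, provided $2s<r/4$.
\end{enumerate}
Two points need care. First, $\1_{B(x_0,R+r/4)}$ is not in $\sF$, so instead take $\varphi=\max_k\varphi_k$ on the covered annulus and equal to $1$ inside. Equivalently, use $\varphi=1-\prod_k(1-\varphi_k)$ restricted appropriately. Membership in $\sF$ then follows from the pointwise bound $|\varphi(x)-\varphi(y)|\le\max_k|\varphi_k(x)-\varphi_k(y)|$, supplemented by $1$ for pairs straddling the region where the inner ball is handled. Alternatively, apply CSJB directly also on balls covering the inner region, so that $\varphi=\max_k\varphi_k$ over a cover of $B(x_0,R+r/2)$ with radius-$s$ balls; these are all inside $B(x_0,R+r)$. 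Second, the energy must be localized. Bound
\[
(\varphi(x)-\varphi(y))^2\le\sum_k(\varphi_k(x)-\varphi_k(y))^2 .
\]
Then split the pairs $(x,y)$ into those with $d(x,y)<s$ and those with $d(x,y)\ge s$. Pairs with $d(x,y)\ge s$ are bounded by Lemma \ref{L:4.6} by $c\,\phi(s)^{-1}\int f^2$ over the relevant ball. For pairs with $d(x,y)<s$, $(\varphi(x)-\varphi(y))$ vanishes unless one point lies in the transition zone of some $\varphi_k$. Each such term is handled by CSJB at $z_k$, giving $C_1\int_{B(z_k,\lambda s)^2}(f(x)-f(y))^2J+C_2\phi(s)^{-1}\int_{B(z_k,\lambda s)}f^2$.

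Summing over $k$ with bounded overlap, we need only those $k$ whose transition zones meet $\{x: \varphi(x)<1\}$, which are the outermost balls. The windows $B(z_k,\lambda s)$ lie in $U^*$ if $\lambda\epsilon\le C_0$. We can arrange $U\times U^*$ for the double integral by choosing the $\varphi_k$ only on balls centred in $U$, and by treating $x\in U$ versus $x\notin U$ using symmetry. The pairs in $U^*\times U^*$ with both points outside $U$ contribute nothing, since $\varphi$ is constant on each side. Finally, $\phi(s)^{-1}\asymp\phi(r)^{-1}$ by \eqref{eqnphi}, which yields CSJ$(\phi)$ with $C_0=\lambda\epsilon$.

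The main obstacle is this bookkeeping: ensuring the double integral lands in $U\times U^*$ rather than $U^*\times U^*$, and ensuring $\varphi\in\sF$ when gluing infinitely or finitely many local cut-offs. We would handle the first by only using balls centred in $U$ and noting that $\varphi$ is locally constant off $\overline U$.
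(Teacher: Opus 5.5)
Your proposal is correct and follows essentially the same route as the paper. For ($\Rightarrow$) you take $R=r$ and control the annulus $B(x_0,3r)\setminus B(x_0,(2+C_0)r)$ via ${\bf J}_{\phi,\leq}$ and Lemma~\ref{L:4.6}; for ($\Leftarrow$) you glue, by a finite maximum, CSJB cut-offs on small balls centred in $B(x_0,R+r/2)\setminus B(x_0,R+r/4)$ (the paper uses radius $r/(4\lambda)$), and split into near and far pairs. The only difference is cosmetic: the paper fills the inner ball with some $\psi\in C_c(\sX)\cap\sF$ supplied by regularity, so $\varphi=(\max_j\varphi_j)\vee\psi\in\sF$ is immediate, which is cleaner than your indicator-based fix or your full-ball cover.
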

 
\begin{proof}
($\Longrightarrow$)  Suppose that ${\rm CSJ}(\phi)$ holds, and let $c,C_0\in(0,1],C_1,C_2$ be the parameters of ${\rm CSJ}(\phi)$. Then, for $R=r\in (0,c\,\diam(\sX))$, $x_0\in\sX$, $f\in\sF$, there is a cut off $\varphi\in\sF$ for $B(x_0,r)\subset B(x_0,2r)=B(x_0,R+r)$ such that  
\begin{equation}\label{e:4.1}
\begin{split} 
&\quad\  \int_{B(x_0,(2+C_0)r)}f^2d\Gamma(\varphi,\varphi)\\
&\leq C_1\int_{U\times U^*}(f(x)-f(y))^2J(dx,dy)+\frac{C_2}{\phi(r)}\int_{B(x_0,(2+C_0)r)}f^2dm\\
&\leq C_1\int_{B(x_0,(2+C_0)r)\times B(x_0,(2+C_0)r)}(f(x)-f(y))^2J(dx,dy)+\frac{C_2}{\phi(r)}\int_{B(x_0,(2+C_0)r)}f^2dm.
\end{split} 
\end{equation} 
Moreover, by ${\bf J}_{\phi,\leq}$ and \eqref{eqnphi}, on $\sX\setminus B(x_0,(2+C_0)r)$, 
\begin{align*}
\Gamma(\varphi,\varphi)(dx)&=\int_{\sX}(\varphi(x)-\varphi(y))^2J(dx,dy) \\
&\leq C_3m(dx)\int_{B(x_0,2r)}\frac{m(dy)}{m(B(x,d(x,y)))\phi(d(x,y))}\\
&\leq C_3m(dx)\int_{\sX\setminus B(x,C_0r)}\frac{m(dy)}{m(B(x,d(x,y)))\phi(d(x,y))}\\
&\leq C_4m(dx)/\phi(r). 
\end{align*}
Hence, 
\begin{equation}\label{e:4.2}
\int_{B(x_0,3r)\setminus B(x_0,(2+C_0)r)}f^2d\Gamma(\varphi,\varphi)\leq \frac{C_4}{\phi(r)}\int_{B(x_0,3r)\setminus B(x_0,(2+C_0)r)} f^2dm.
\end{equation}
Noting that
\[
\int_{B(x_0,3r)\times B(x_0,3r)}f(x)^2 \big(\varphi(x)-\varphi(y)\big)^2J(dx,dy)
\leq 
\int_{B(x_0,3r)}f^2d\Gamma(\varphi,\varphi). 
\]
CSJB($\phi$) with $\lambda=3$ follows from \eqref{e:4.1} and \eqref{e:4.2}. \medskip

($\Longleftarrow$)  Suppose that CSJB($\phi$) holds, and let $\lambda,c,C_1,C_2$ be the positive constants of CSJB($\phi$). We fix $x_0\in\sX$, $f\in \mcF$ and $0<r\leq R<4c\,\diam(\sX)$. We write
\[
B_l=B(x_0,R+lr/4)\ \hbox{ for }l=0,1,2,3,4.
\]
We define a cut-off $\varphi$ for $B_2\subset B_3$ as follows.

\begin{enumerate}[(i)]

\item By Lemma \ref{lemmavd2}, we can find a finite set of points $\{x_j\}_{j\in J}\subset B_2\setminus B_1$
such that $\{B(x_j,\frac{r}{8\lambda}):\,j\in J\}$ are pairwise disjoint and $\bigcup_{j\in J}B(x_j,\frac{r}{4\lambda})\supset B_2\setminus B_1$.

\item 
For each $j\in J$, by CSJB($\phi$), we choose a cut-off function $\varphi_j\in \mcF$ for 
$B(x_j,\frac{r}{4\lambda})\subset B(x_j,\frac{r}{2\lambda})$ such that 
\begin{align}
\begin{split}\label{eqn32}
&\quad\  \int_{B(x_j,\frac{3r}{4\lambda})\times B(x_j,\frac{3r}{4\lambda})} f(x)^2\big(\varphi_j (x)-\varphi_j (y)\big)^2J(dx,dy)\\
& \leq  \, C_1\int_{B(x_j,r/4)\times B(x_j,r/4)}\big(f(x)-f(y)\big)^2J(dx,dy)+\frac{C_2}{\phi (r)} \int_{B(x_j,r/4)}f(x)^2 m(dx).
\end{split}
\end{align}

\item Choose $\psi\in C_c(\sX)\cap\sF$ such that $0\leq\psi\leq1$, $\psi|_{B_1}=1$ and $\psi|_{\sX\setminus B_2}=0$. Define 
\[\varphi=(\max_{j\in J}\varphi_j)\vee \psi.\] 
\end{enumerate}
Then, $\varphi$ is a cut-off function for $B_2\subset B_3$. Automatically, $\varphi$ is a cut-off function for $B_0=B(x_0,R)\subset B_4=B(x_0,R+r)$.\medskip

Note that $\varphi=\max\limits_{j\in J}\varphi_j$ on $\sX\setminus B_1$ 
 and $ |\varphi (x)-\varphi (y)|\leq\max\limits_{j\in J}|\varphi_j (x)-\varphi_j (y)|$ for $x, y\in \sX\setminus B_1$.  We have 
\begin{eqnarray}\label{e:4.4}
 && \int_{(B_4\setminus B_1)\times (B_4\setminus B_1)\atop d(x,y)\leq \frac{r}{4\lambda}}f(x)^2\big(\varphi(x)-\varphi(y)\big)^2J(dx,dy)
 \nonumber \\
& \leq & \int_{(B_4\setminus B_1)\times (B_4\setminus B_1)\atop d(x,y)\leq \frac{r}{4\lambda}}f(x)^2\sum_{j\in J}\big(\varphi_j(x)-\varphi_j(y)\big)^2J(dx,dy)
\nonumber \\
&\leq & \sum_{j\in J}\int_{B(x_j,\frac{3r}{4\lambda})\times B(x_j,\frac{3r}{4\lambda})}f(x)^2\big(\varphi_j(x)-\varphi_j(y)\big)^2J(dx,dy) \nonumber \\
& \leq &\sum_{j\in J}\Big(C_1\int_{B(x_j,r/4)\times B(x_j,r/4)}\big(f(x)-f(y)\big)^2J(dx,dy)+\frac{C_2}{\phi (r)} \int_{B(x_j,r/4)}f(x)^2 m(dx)\Big) \nonumber \\
&\leq &C_1C_3\int_{U\times U}\big(f(x)-f(y)\big)^2J(dx,dy)+\frac{C_2C_3}{\phi (r)} \int_{U}f(x)^2 m(dx),
 \end{eqnarray}
where $U=B_4\setminus B_0$. In the second inequality, we used the fact that $\varphi_j(x)-\varphi_j(y)\neq 0$ only if $\{x,y\}\cap B(x_j,\frac{2r}{4\lambda})\neq \emptyset$ which implies $\{x,y\}\subset B(x_j,\frac{3r}{4\lambda})$. In the third inequality, we used \eqref{eqn32}. In the last inequality, we used the facts that $B(x_j,r/4)\subset B_4\setminus B_0$ for each $j\in J$ and $\sum_{j\in J}1_{B(x_j,r/4)}\leq C_3<\infty$ by Lemma \ref{lemmavd2}. 

Moreover, by   Lemmas \ref{L:4.6} and \ref{L:4.7}, \eqref{eqnphi} and the fact $\varphi$ is a cut-off function for $B_2\subset B_3$, we can easily check that 
\begin{align}
\label{e:4.5}\int_{(B_4\setminus B_1)\times (B_4\setminus B_1)\atop d(x,y)>\frac{r}{4\lambda}}f(x)^2\big(\varphi(x)-\varphi(y)\big)^2J(dx,dy)\leq \frac{C_4}{\phi (r)} \int_{B_4\setminus B_1}f(x)^2 m(dx),\\
\label{e:4.6}\int_{(B_4\setminus B_1)\times \big(\sX\setminus (B_4\setminus B_1)\big)}f(x)^2\big(\varphi(x)-\varphi(y)\big)^2J(dx,dy)\leq \frac{C_5}{\phi (r)} \int_{B_4\setminus B_1}f(x)^2 m(dx),\\
\label{e:4.7}\int_{K}f^2d\Gamma(\varphi,\varphi)\leq \frac{C_6}{\phi (r)} \int_{K}f (x) ^2 m (dx) 
\quad\hbox{ for each }K\subset \sX\setminus (B_4\setminus B_1).
\end{align}
This shows that CSJ$(\phi)$ holds with $C_0=1$ by \eqref{e:4.4}--\eqref{e:4.7}. 
\end{proof}

\begin{theorem}\label{T:5.3}
Suppose  that {\rm (VD)}, ${\bf J}_{\phi}$ and {\rm CSJ$(\phi)$} hold for $(\sX, d, m, \sE, \sF)$, and that $D\subset \sX$ is Alhfors regular. Then {\rm CSJ$(\phi)$} holds for $(\overline D, d, m_0, \bar\sE,\bar \sF)$. 
\end{theorem}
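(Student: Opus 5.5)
The plan is to pass through the ball formulation CSJB$(\phi)$ of Definition \ref{D:4.1}, using Lemma \ref{L:5.2} twice. On $(\sX,d,m,\sE,\sF)$ we have (VD) and ${\bf J}_{\phi,\leq}$, so ${\rm CSJ}(\phi)$ implies CSJB$(\phi)$ there, with constants $C_1,C_2,c,\lambda$. For the reflected space we first check its hypotheses. $(\bD,d,m_0)$ is (VD) by Lemma \ref{L:2.5}(b). Moreover $m_0(B(x,r))\asymp V(x,r)$ for $x\in\bD$ and $r<\diam(D)/2$, by Ahlfors regularity; larger radii are handled by (VD) and $\phi$-scaling when $D$ is bounded. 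Hence the jump kernel $J|_{D\times D}$ satisfies ${\bf J}_\phi$ relative to $m_0$. So it suffices to prove CSJB$(\phi)$ for $(\bD,d,m_0,\bar\sE,\bar\sF)$, possibly with a larger $\lambda$ and a smaller $c$. We also record that (VD) $+{\bf J}_{\phi,\leq}+{\rm CSJ}(\phi)$ gives ${\rm Cap}(\phi)_\leq$ by \eqref{e:Cap}. Thus all hypotheses of Propositions \ref{P:3.7} and \ref{P:4.1} hold, and the extension operator $\Ex$ is available.

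Next, fix $f\in\bar\sF$, $x_0\in\bD$ outside an $m$-null set, and $0<r<c'\diam(D)$ with $c'\le c$ small, say $c'\lambda<1$. By Proposition \ref{P:4.1}, $\Ex f\in\sF$. Since $m(\partial D)=0$ by Lemma \ref{L:2.5}(a), the exceptional set of CSJB for $\sX$ meets $\bD$ in an $m_0$-null set. Apply CSJB$(\phi)$ on $\sX$ to $\Ex f$ at $x_0$ and radius $r$; this requires $r<c\,\diam(\sX)$, which holds since $\diam(D)\le\diam(\sX)$. It produces a cut-off $\varphi\in\sF$ for $B(x_0,r)\subset B(x_0,2r)$. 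Put $\bar\varphi:=\varphi|_{\bD}$. Then $\bar\varphi\in\bar\sF$, since $\bar\sE(\bar\varphi,\bar\varphi)\le\sE(\varphi,\varphi)$, and $\bar\varphi$ is a cut-off for $B_{\bD}(x_0,r)\subset B_{\bD}(x_0,2r)$. Since $\Ex f=f$ on $\bD$, restricting the domain of integration gives
\[
\int_{B_D(x_0,3r)\times B_D(x_0,3r)} f(x)^2(\bar\varphi(x)-\bar\varphi(y))^2J(dx,dy)\le C_1\int_{B(x_0,\lambda r)^2}(\Ex f(x)-\Ex f(y))^2J(dx,dy)+\frac{C_2}{\phi(r)}\int_{B(x_0,\lambda r)}(\Ex f)^2dm .
\]

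We then bound both terms on the right by quantities on $D$. Since $\lambda r<\diam(D)$, estimate \eqref{e:4.1b} with radius $\lambda r$ bounds the energy term by $C\int_{B_D(x_0,7\lambda r)\times B_D(x_0,14\lambda r)}(f(x)-f(y))^2J(dx,dy)$. This is at most the same integral over $B_D(x_0,14\lambda r)^2$. The $L^2$ term is bounded by $C\int_{B_D(x_0,7\lambda r)}f^2dm_0$, by \eqref{e:3.8a}. This yields CSJB$(\phi)$ for the reflected space with $\lambda'=14\lambda$ and constant $c'$. Lemma \ref{L:5.2}, applied on $(\bD,d,m_0,\bar\sE,\bar\sF)$, then gives ${\rm CSJ}(\phi)$.

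The step I expect to be the main obstacle is the bookkeeping of scales. The estimate \eqref{e:4.1b} only holds for radii below $\diam(D)$, whereas CSJB/CSJ for $\bD$ is required for $r<c\,\diam(\bD)$. I would handle this by taking the constant $c'$ in the reflected CSJB equal to $\min\{c,1/(2\lambda)\}$, which is allowed because the definition only requires some positive constant. I would also check the following points. The $m_0$-versions of ${\bf J}_\phi$ and of Lemmas \ref{L:4.6}–\ref{L:4.7} hold on $\bD$ with comparable constants, as needed inside the proof of Lemma \ref{L:5.2}. When $D$ is bounded, the ${\bf J}_\phi$ comparison at distances up to $\diam(D)$ follows from (VD). Finally, the quasi-continuity conventions for $f$ versus $\Ex f$ cause no trouble, since all quantities involved are integrals against $m$ or $J$.
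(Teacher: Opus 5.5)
Your proposal is correct and follows the paper's proof essentially step for step. You pass from ${\rm CSJ}(\phi)$ on $\sX$ to CSJB$(\phi)$ via Lemma \ref{L:5.2}, apply it to $\Ex f$, restrict the left side to $B_D(x_0,3r)^2$, bound the right side by Propositions \ref{P:3.7} and \ref{P:4.1} to get CSJB$(\phi)$ on $\bD$ with $14\lambda$, and return to ${\rm CSJ}(\phi)$ by Lemma \ref{L:5.2}. The paper leaves several points implicit, and you handle each of them correctly: you obtain ${\rm Cap}(\phi)_{\leq}$ from \eqref{e:Cap} so that $\Ex$ is defined, you verify ${\bf J}_\phi$ relative to $m_0$, and you shrink $c$ so that $\lambda r<\diam(D)$ when invoking \eqref{e:4.1b}.
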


\begin{proof}
By Lemma \ref{L:5.2}, CSJB$(\phi)$ holds for $(\sX,d,m,\sE,\sF)$. We let $\lambda,c,C_1,C_2$ be the parameters of CSJB$(\phi)$ for $(\sX,d,m,\sE,\sF)$.

Fix $x_0\in \overline{D}$, $f\in\bar\sF$, $0<r<c\,\diam(D)$ and $g={\Ex} f$. By Proposition \ref{P:4.1}, $g\in \mcF$. So by CSJB$(\phi)$ for $(\sX,d,m,\sE,\sF)$, there is  a cut-off function $\varphi\in\sF$ for $B(x_0,r)\subset B(x_0,2r)$ such that  
\begin{align}\label{eqn37}
\begin{split}
&\quad\ \int_{B_D(x_0,3r)\times B_D(x_0,3r)}f(x)^2\big(\varphi(x)-\varphi(y)\big)^2J(dx,dy)\\
&\leq\int_{B(x_0,3r)\times B(x_0,3r)} g(x)^2\big(\varphi(x)-\varphi(y)\big)^2J(dx,dy)\\
&\leq C_1\int_{B(x_0,\lambda r)\times B(x_0,\lambda r)}\big(g(x)-g(y)\big)^2J(dx,dy)+\frac{C_2}{\phi (r)} \int_{B(x_0,\lambda r)}g(x)^2 m(dx).
\end{split}
\end{align}
By Propositions \ref{P:3.7} and \ref{P:4.1},   
\begin{align*}
\int_{B(x_0,\lambda r)}g (x)^2 m (dx) &\leq C_3\int_{B_D(x_0,7\lambda r)}f (x) ^2 m (dx),\\
\int_{B(x_0,\lambda r)\times B(x_0,\lambda r)}\big(g(x)-g(y)\big)^2J(dx,dy)&\leq C_4\int_{B_D(x_0,14\lambda r)\times B_D(x_0,14\lambda r)}\big(f(x)-f(y)\big)^2J(dx,dy).
\end{align*}  
It follows that CSJB$(\phi)$ holds for $(\overline D, d, m_0, \bar\sE,\bar\sF)$. 
This completes the proof of the theorem in view of Lemma \ref{L:5.2}.
\end{proof}

\medskip

\begin{proof}[Proof of Theorem \ref{mainthm}]
Theorem \ref{mainthm} is an immediate consequence of Lemma \ref{L:2.5},
Theorem \ref{T:5.3} and \eqref{e:2.6a} in Remark \ref{R:2.7}.
\end{proof}

 \hskip 0.2truein
 
\end{document}